\numberwithin{equation}{section}
\newcommand{\I}{\mathrm{i}}
\newcommand{\wh}{\widehat}
\newcommand{\lb}{\left(}
\newcommand{\rb}{\right)}
\newcommand{\PD}{\partial}
\renewcommand{\d}{\delta}
\newcommand{\Beq}{\begin{equation}}
\newcommand{\Eeq}{\end{equation}}
\newcommand{\beq}{\begin{equation*}}
\newcommand{\eeq}{\end{equation*}}
\newcommand{\bal}{\begin{align}}
\newcommand{\eal}{\end{align}}
\renewcommand{\O}{\Omega}
\newcommand{\g}{\gamma}
\newcommand{\n}{\nabla}
\newcommand{\B}{\beta}
\newcommand{\bp}{\begin{prob}}
	\newcommand{\ep}{\end{prob}}
\newcommand{\bpr}{\begin{proof}}
	\newcommand{\epr}{\end{proof}}
\renewcommand{\o}{\omega}
\newcommand{\st}{\,:\,}
\newcommand{\bel}[1]{\begin{equation}\label{#1}}
\newcommand{\ee}{\end{equation}}
\newcommand{\ssubset}{\subset\joinrel\subset}
\newtheorem{theorem}{Theorem}[section]
\newtheorem{corollary}[theorem]{Corollary}
\newtheorem{lemma}[theorem]{Lemma}
\newtheorem{proposition}[theorem]{Proposition}
\theoremstyle{definition}
\newtheorem{definition}[theorem]{Definition}
\newtheorem{remark}[theorem]{Remark}
\newcommand{\Rn}{\mathbb{R}^n}
\newcommand{\R}{\mathbb{R}}
\newcommand{\D}{\mathrm{d}}
\newcommand{\Lc}{\mathcal{L}}
\newcommand{\Rb}{\mathbb{R}}
\newcommand{\Dc}{\mathcal{D}}
\newcommand{\Nc}{\mathcal{N}}
\newcommand{\A}{\alpha}
\newcommand{\vp}{\varphi}
\newcommand{\Oc}{\mathcal{O}}
\newcommand{\Cb}{\mathbb{C}}
\newcommand{\Lm}{\left\lvert}
\newcommand{\Rm}{\right\rvert}
\newcommand{\ve}{\varepsilon}
\newcommand{\mh}{h_1}
\newcommand{\Wc}{\mathcal{W}}
\renewcommand{\d}{\delta}
\newcommand{\wt}{\widetilde}
\newcommand{\Bc}{\mathcal{B}}
\newcommand{\Ec}{\mathcal{E}}
\newcommand{\Sb}{\mathbb{S}}
\newcommand{\Sn}{\mathbb{S}^{n-1}}
\newcommand{\Sm}{\mathbf{S}}
\newcommand{\aK}{\mathfrak{a}}
\newcommand{\bK}{\mathfrak{b}}
\renewcommand{\O}{\Omega}
\renewcommand{\o}{\omega}
\newcommand{\sbc}[1]{\begin{quotation}\textbf{\color{teal}Sombuddha's comment:\
}{\color{teal}\textit{#1}}\end{quotation}}
\newcommand{\vkc}[1]{\begin{quotation}\textbf{\color{red}Venky's comment:\
}{\color{red}\textit{#1}}\end{quotation}}
\title[Inverse problem for a polyharmonic operator]{Unique determination of 
anisotropic perturbations of a polyharmonic operator from partial boundary data}
\author[Bhattacharyya, Krishnan and Sahoo]{Sombuddha Bhattacharyya$^\ast$, Venkateswaran P. Krishnan$^{\mathsection}$ and Suman Kumar Sahoo$^{\dagger}$}
\address {$^{\ast}$ Department of Mathematics, Indian Institute of Science Education and Research, Bhopal.
\newline
E-mail:{\tt\  sombuddha@iiserb.ac.in}}
\address{$^{\mathsection}$ Centre for Applicable Mathematics, Tata Institute of Fundamental Research, India.
\newline
E-mail:{\tt \ vkrishnan@tifrbng.res.in}}
\address{$^{\dagger}$ Department of Mathematics and Statistics, University of Jyv\"askyl\"a, Finland.
\newline
E-mail:{\tt \ suman.k.sahoo@jyu.fi}}
\begin{document}
	
\begin{abstract}
	We study an inverse problem involving the unique recovery of several lower order anisotropic tensor perturbations of a polyharmonic operator in a bounded domain from  the knowledge of the Dirichlet to Neumann map on a part of boundary. The uniqueness proof relies on the inversion of generalized momentum ray transforms (MRT) for symmetric tensor fields, which we introduce for the first time to study Calder\'on-type inverse problems. We construct suitable complex geometric optics (CGO) solutions for the polyharmonic operators that reduces the inverse problem to uniqueness results for a generalized MRT. The uniqueness result and the inversion formula we prove for generalized MRT could be of independent interest and we expect it to be applicable to other inverse problems for higher order operators involving tensor perturbations.  
	
\end{abstract}

	\subjclass[2010]{Primary 35R30, 31B20, 31B30, 35J40}
	\subjclass[2020]{Primary 35R30, 31B20, 31B30, 35J40}
	\keywords{Calder\'{o}n problem, Perturbed polyharmonic operator, Anisotropic perturbation,Tensor tomography, Momentum ray transform}
	
	\maketitle
	
	\section{Introduction and statement of the main result}
	
	Let $\O\subset \Rb^n$ for $n\geq 3$ be a bounded domain with smooth boundary $\PD\O$.  For $m\geq 2$, we consider the following polyharmonic operator $\Lc$ with lower order anisotropic/tensor perturbations  up to order $m$: 

\begin{equation}\label{operator}
\Lc(x,D)=(-\Delta)^m + \sum\limits_{j=0}^{m-1} \sum\limits_{i_1,\cdots, i_{m-j}=1}^{n}A_{i_{1}\cdots i_{m-j}}^{m-j}(x) D^{m-j}_{i_{1}\cdots i_{m-j}}+ q(x).
\end{equation}
Here as usual, $D^{k}_{i_{1}\cdots i_{k}}=\frac{1}{\I^{k}}\frac{\PD^{k}}{\PD x_{i_1}\cdots \PD x_{i_{k}}}$. 
We assume that $q(x)\in L^{\infty}(\O)$ and all the other coefficients $\{A^{j}\}$ for $1\leq j\leq m$  are symmetric in its indices and belong to $C^{\infty}(\overline{\O})$. To avoid proliferation of indices, we denote the symmetric tensor field $(A^{j}_{i_1\cdots i_j})$ as simply $(A^j)$.

	Let the domain of the operator $\Lc(x,D)$ be 
	\begin{equation}\label{domain}
		\Dc(\Lc(x,D)) = \Big{\{}u\in H^{2m}(\Omega): u|_{\partial\Omega}=(-\Delta)u|_{\partial\Omega}=\cdots=(-\Delta)^{m-1}u|_{\partial\Omega} =0\Big{\}}.
	\end{equation}
	
	We make the assumption that $0$ is not an eigenvalue of $\Lc(x,D): \Dc(\Lc(x,D)) \to L^2(\O)$ for the set of coefficients $(A^{j})$ and $q$ under consideration. We denote 
	\begin{equation}\label{trace_map}
		\gamma u = \Big{(} u |_{\partial\Omega},\cdots,(-\Delta)^k u |_{\partial\Omega},\cdots, (-\Delta)^{m-1}u |_{\partial\Omega}\Big{)} \in \prod_{k=0}^{m-1} H^{2m-2k-\frac{1}{2}}(\partial\Omega).
	\end{equation}
	Then for any $ f = (f_0,f_1,..,f_{m-1}) \in \prod_{k=0}^{m-1} H^{2m-2k-\frac{1}{2}}(\partial\Omega)$, the boundary value problem, 
	\begin{equation}\begin{aligned}\label{problem}
			\Lc(x,D)u &= 0\quad \mbox{ in }\Omega, \\
			\gamma u&= f \quad \mbox{ on }\partial\Omega,
		\end{aligned}
	\end{equation}
	has a unique solution $u_f\in H^{2m}(\Omega)$.
	
	We define the Neumann trace operator $\gamma^{\#} $ by
	\[
	\gamma^{\#} u = \Big{(}\partial_{\nu} u |_{\partial\Omega},\cdots,\partial_{\nu}(-\Delta)^k u |_{\partial\Omega},\cdots,\partial_{\nu} (-\Delta)^{m-1}u |_{\partial\Omega}\Big{)},\\
	\]
	where $\nu$ is the outward unit normal to the boundary $\partial\Omega$. 	Let $\Gamma \subset \partial\Omega$ be a non-empty open subset of the boundary. We define the partial Dirichlet to Neumann (DN) map as
	\begin{equation}\label{partial_Neumann}
		\mathcal{N}_{\Gamma}(f) := \gamma^{\#}u_f|_{\Gamma}
		\in \prod_{k=0}^{m-1} H^{2m-2k-\frac{3}{2}}(\Gamma),
	\end{equation}

Let $\vp(x) = x_1$ and  
$\nu$ be the outward unit normal vector field to $\PD \O$. 
Define the back and front faces $\PD_{\pm} \O$ of $\PD \O$ as follows:  
\begin{equation}\label{parts_of_boundary}
    \partial_{\pm}\Omega = \{ x\in \partial\Omega \st \pm \PD_{\nu}\vp(x)= \pm \nu_1 \geq 0\}.
\end{equation}
Let $\Gamma_F, \Gamma_{B} \subset \partial \Omega$ be open neighborhoods of $\partial_{-}\Omega$  and $\partial_{+}\Omega$ respectively.

Before we state the main result, let us mention our assumption on the leading order tensor field $A^m$. We assume that this tensor field  has a partial isotropy in the following sense. There exists a smooth symmetric $m-2$ tensor field $\overline{A}^{\:m-2}$  in $\O$ such that $A^m= i_{\delta} \overline{A}^{\:m-2} =\overline{A}^{\:m-2}\odot \delta$, where $\delta$ is the Kronecker delta tensor and $\odot$ denotes symmetrization of the indices. This constraint on the leading coefficient tensor field is required for the techniques employed in this paper to work. See Remarks \ref{Rem_isotropy_1} and \ref{Rem_isotropy_2} for more details.

	The main result of the article is that the partial DN map uniquely determines \emph{all} the coefficients $\lb A^{j}\rb$ and $q$. 
	\begin{theorem}\label{mainresult_1}
		Let $\Omega\subset \R^n$ for $n\geq 3$ be a bounded domain with smooth boundary and  $\Gamma_F\subset \partial\Omega$ be an open neighborhood of $\PD_{-}\O$ defined in \eqref{parts_of_boundary}. Let us consider two operators $\Lc(x,D)$ and $\wt{\Lc}(x,D)$ with corresponding coefficients being denoted by $(A^{j})_{1\leq j\leq m}$, $q$ and $(\wt{A}^{j})_{1\leq j\leq m}$, $\wt{q}$, respectively. 
		If $\Nc_{\Gamma_F}$ and $\wt{\Nc}_{\Gamma_{F}}$ denote the partial Dirichlet to Neumann maps corresponding to $\Lc$ and $\wt{\Lc}$, respectively, then  $\Nc_{\Gamma_F}=\wt{\Nc}_{\Gamma_{F}}$ implies $(A^{j}) = (\wt{A}^{j})$ for all $1\leq j\leq m$ and $q=\wt{q}$ on $\overline{\O}$.
	\end{theorem}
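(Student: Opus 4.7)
The plan is to follow the standard Calder\'on-type strategy adapted to the polyharmonic setting: derive an integral identity from the equality of the partial DN maps, substitute complex geometric optics (CGO) solutions constructed via Carleman estimates, and reduce the problem to the injectivity of a family of generalized momentum ray transforms (MRTs) on symmetric tensor fields, which when inverted recovers all the coefficients in order of decreasing tensor rank.

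First, starting from $\Nc_{\Gamma_F}=\wt{\Nc}_{\Gamma_F}$, integration by parts using the boundary conditions in $\Dc(\Lc(x,D))$ together with Green's formula for $(-\Delta)^{m}$ yields an integral identity of the form
\begin{equation*}
\int_{\O}\Bigl[(\Lc-\wt\Lc)\,u_1\Bigr]\,\overline{u_2}\,\D x \;=\; \Bigl(\text{boundary terms supported on }\PD\O\setminus\Gamma_F\Bigr),
\end{equation*}
whenever $u_1$ solves $\Lc u_1=0$ and $u_2$ solves the adjoint $\wt\Lc^{\ast}u_2=0$. Next, using $\vp(x)=x_1$ as the Carleman weight, I would construct CGO solutions of the form $u_1=e^{x\cdot\zeta_1/h}(a_1+h b_1+\cdots+r_1)$ and $u_2=e^{x\cdot\zeta_2/h}(a_2+h b_2+\cdots +r_2)$, where $\zeta_j\in\Cb^n$ satisfy $\zeta_j\cdot\zeta_j=0$ and $\zeta_1+\overline{\zeta_2}=\I k$ for a prescribed real frequency $k$. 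A Carleman estimate for $(-\Delta)^{m}$ with boundary terms, combined with a convexification of the weight, produces remainders $r_j$ that are $o(1)$ in suitable norms and whose boundary traces on $\PD\O\setminus\Gamma_F$ (i.e.\ the ``wrong'' portion of $\PD_{+}\O$) vanish in the limit $h\to 0$; this kills the boundary contribution in the identity above.

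Substituting the CGOs and expanding in powers of $h$, the principal symbols of the perturbation operators contract with symmetric powers of $\zeta_1$, producing (after passing to the limit $h\to 0$ and varying the admissible amplitudes $a_j,b_j$ as polynomials of controlled degree) a family of Fourier-side identities
\begin{equation*}
\int_{\O} e^{\I k\cdot x}\,\bigl\langle A^{m-j}-\wt A^{m-j},\,\zeta^{\otimes(m-j)}\bigr\rangle\,P(x)\,\D x \;=\;0,
\end{equation*}
where $P$ runs through a family of polynomials and $\zeta$ spans a suitable real direction. These are precisely the generalized MRTs of the difference tensors. Invoking the inversion results for generalized MRT that the authors establish, I recover the tensor fields inductively from the top. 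The partial isotropy hypothesis $A^{m}=\overline{A}^{\,m-2}\odot\delta$ is essential here: the contraction with $\zeta^{\otimes m}$ combined with $\zeta\cdot\zeta=0$ makes a fully anisotropic $m$-tensor only visible through an object of rank $m-2$, and this reduced object is exactly what the generalized MRT sees injectively. Once $A^{m}$ is identified, its contribution is removed from the identity, and the same scheme applied to the next level yields $A^{m-1}$, then iteratively $A^{m-2},\ldots,A^{1}$, and finally $q$.

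The main obstacles I foresee are (i) designing the asymptotic expansion in $h$ together with the right hierarchy of polynomial amplitudes $a_j,b_j$ so that each tensor field appears, cleanly separated, as the unknown of an injective generalized MRT identity, and (ii) upgrading the Carleman estimate for $(-\Delta)^{m}$ to the boundary-controlled form that allows both the polynomial amplitude CGOs and the vanishing of remainder traces on $\PD\O\setminus\Gamma_F$. The second obstacle is technical but follows the scheme known from the biharmonic case; the first is the genuinely new ingredient and is where the partial isotropy assumption on $A^{m}$ plays a decisive role.
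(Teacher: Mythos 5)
Your outline matches the paper's architecture (integral identity, CGO solutions, boundary Carleman estimate, reduction to generalized MRT, induction down the tensor ranks), but the step you defer as ``obstacle (i)'' is precisely where the proof lives, and as written it does not close. Two concrete problems. First, with linear phases $e^{x\cdot\zeta_j/h}$, $\zeta_j\cdot\zeta_j=0$, $\zeta_1+\overline{\zeta_2}=\I k$, the limit $h\to 0$ produces Fourier--side identities $\langle \widehat{W}^{j}(k),\zeta^{\otimes j}\rangle=0$ over a constrained set of isotropic directions $\zeta\perp k$; for a symmetric $j$-tensor this only pins down $W^{j}$ modulo a large kernel (potential parts and $i_{\delta}$-images), which is exactly why the earlier works you would be following needed simple connectedness and a two-step ``potential, then zero'' argument. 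It is not ``precisely a generalized MRT.'' The paper instead uses the phase $\vp+\I\psi=x_1+\I|x'|$ and the \emph{explicit} general solution of the degenerate transport equation $T^m a_0=0$, namely $a_0=\sum_{k=1}^m(z-\bar z)^{(2k-n)/2}h_k(z)$ (Lemma \ref{Prop4.1}): the freedom in the exponent is what generates the weights $r^{\A}$, $0\le\A\le m$, i.e.\ the momenta, and the angular factor $g(\theta)$ localizes the identity to two-dimensional slices where it becomes a ray transform. Your proposal contains no mechanism that produces the momenta.

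Second, even granting the MRT identities, ``invoking the inversion results'' is not a single application. Because the direction variable is confined to the unit sphere bundle, the generalized MRT of a sum of tensors of different ranks has a genuinely non-trivial kernel (Theorem \ref{Th_MRT_unit_disk}): at the order $h^{m-1}$ one only learns $W^{m-1}=i_{\delta}F^{m-1,1}$ and $\overline{W}^{\:m-2}=i_{\delta}F^{m,2}$, not that these vanish. The paper then has to return to the integral identity at each successive power of $h$, using the partial information already obtained to improve the boundary estimates (Lemma \ref{Prop_boundary_est_3}, which exploits the $l$-isotropy gained at the previous step), and iterate until the residual objects have rank $0$ or $1$, where the kernel is trivial. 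Relatedly, the boundary terms you propose to kill are not only the Neumann traces of $(-\Delta)^k(u-\wt u)$: identity \eqref{boundary_term} contains terms in which the anisotropic coefficients are contracted against lower-order traces, and controlling these requires the semiclassical elliptic regularity estimate of Appendix \ref{Appendix_elliptic_regu} and, crucially, the partial isotropy $A^m=i_{\delta}\overline{A}^{\:m-2}$ (Remark \ref{Rem_isotropy_2}) — a role for that hypothesis quite different from the symbolic-contraction role you assign it.
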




The new ingredient in our proof is the inversion of generalized momentum ray transforms (MRT) of symmetric tensor fields. MRT, for theoretical reasons, was introduced by Sharafutdinov in \cite{Sharafutdinov_1986_momentum,Sharafutdinov_book}.  Recently this transform was studied in  greater detail in \cite{KMSS,KMSS_range,Rohit_Suman}.  In our work, in fact, we study a generalization of MRT, in the sense that the transform is a sum of MRT of symmetric tensor fields up to a certain order. Furthermore, the added difficulty in our set-up is that the transform is only known on the unit sphere bundle of $\Rb^n$. While in the usual ray transform or even in the standard  MRT studied earlier, the knowledge of these transforms on the unit sphere bundle or on $\Rb^n\times \Rb^n\setminus \{0\}$ are equivalent, here it is not the case due to the fact that we have a sum of MRT of different rank symmetric tensor fields, and each one has different degree of homogeneity in the direction variable. Furthermore, due to the fact that, in our problem, the generalized MRT is only known on the unit sphere bundle, this transform has a non-trivial kernel. We require a further iterative argument to show the unique recovery of the tensor fields from generalized MRT in our inverse problem. 

Until now, to the best of our knowledge, no direct or indirect application motivating the study of momentum ray transforms of symmetric tensor fields has existed in literature. Our current work is the first known study where an application of the inversion of MRT of symmetric tensor fields is used in the solution of inverse problems. The authors expect the inversion of such MRT to arise in several other situations as well, especially, in inverse  problems for higher order operators involving semilinear and power-type non-linearities.
	
	Another advantage of the analysis of MRT in the context of our inverse problem is that we can relax the restriction that the domain be simply connected as was required in the works \cite{KRU2,KRU1,Ghosh-Krishnan,BG_19,BG20}. This assumption is required in these works since the recovery of a vector field perturbation (for instance) from boundary Dirichlet-to-Neumann data was done in two steps. First, the vector field term is shown to be potential and then one shows that the the potential term is $0$; it is at this stage that the domain being simply connected is required. However, using our approach, we are able to directly show   the unique recovery of tensor perturbations from boundary data.
	
We now give a brief history of inverse problems for PDEs, in relation to the problem we study. 
The study in this direction was initiated by Calder\'on in his fundamental article \cite{Calderon_Paper}. This was soon followed by several deep and insightful works in this area; see \cite{SYL,Nachman_annals,AP,KEN} to name a few.
In recent years, inverse problems involving higher order operators, motivated by potential applications in fields such as elasticity and conformal geometry, have attracted significant attention. These problems address the recovery of several lower order coefficients of the operator based on boundary Dirichlet to Neumann measurements. 
Another related set of problems that has been investigated well is inverse problems involving system of PDEs. We refer to the following works in this area \cite{Somarsalo_Isaacson_Cheney_inv_maxwell,Salo-Tzou, COS09, Kenig_Salo_Uhlmann_duke, Caro_Zhou_time_harmonic_maxwell,CST_Hodge_Laplace_inv}.

In \cite{KRU2} the authors showed uniqueness of the zeroth order and the first order vector field perturbations of a biharmonic operator from the Dirichlet to Neumann map measured only on a part of the boundary. The work \cite{KRU1} established the uniqueness of the zeroth order and first order perturbations of a polyharmonic operator from Dirichlet to Neumann map measured on the whole boundary. Later \cite{Ghosh-Krishnan} proved unique determination of multiple isotropic perturbations, given by either a function or a vector field, of a polyharmonic operator from partial boundary measurements.
Recently \cite{BG_19,BG20} proved unique recovery of a symmetric matrix appearing as a second order perturbation of a polyharmonic operator along with the first and the zeroth order perturbations from boundary measurements. We should mention that there are very few works in the field of inverse boundary problems addressing recovery of anisotropic perturbations from boundary data and our work is a step in this direction. We  substantially generalize the existing results on inverse problems for polyharmonic operators by showing unique recovery of symmetric $k$ tensor fields appearing as perturbations of a polyharmonic operator  for $0 \leq k \leq m$, from partial boundary measurements. In the process, as mentioned already, we believe the techniques introduced here could potentially be applied to other inverse problems involving anisotropic perturbations.


This paper is organized as follows. In Section \ref{Sec_Prelim}, we give the preliminary results required in the proof of our main results -- interior and boundary Carleman estimates, CGO solutions and suitable solutions of transport equations. 
	Section \ref{determination} is devoted to the uniqueness proof assuming the inversion of generalized MRT which we prove in Section \ref{Sec_MRT}. Appendix \ref{Appendix_elliptic_regu} gives a regularity estimate well-known in the classical setting but we could not find a proof of this estimate in the semi-classical setting in the literature, which we require for our boundary Carleman estimate in Section \ref{Sec_Prelim}.

\section{Preliminaries}\label{Sec_Prelim}
In this section, we collect all the preliminary results required for the proof of our main result. Construction of special solutions to the transport equation (see Lemma \ref{Prop4.1}) and the boundary Carleman estimate (see Proposition \ref{bdy_Car_Est}) are the new results in this section. Therefore only the proofs of these two results are given.

\subsection{Interior Carleman estimate}\label{Carleman Estimates}
Let $\Omega\subset \R^n$, $n\geq 3$ be a bounded domain with smooth boundary. In this section, we construct Complex Geometric Optics (CGO) solutions for \eqref{operator} following a Carleman estimate approach from \cite{BUK,DOS}. 

We first introduce semiclassical Sobolev spaces. 
For $h>0$ be a small parameter, we define the semiclassical Sobolev space $H^s_{\mathrm{scl}}(\mathbb{R}^n)$, $s \in \R$ as the space $H^s(\Rn)$ endowed with the semiclassical norm
\Beq	\lVert u \rVert^2_{H^s_{\mathrm{scl}}(\Rn)} = \lVert \langle hD\rangle^s \, u \rVert^2_{L^2(\Rn)}, \quad \langle \xi \rangle= (1+|\xi|^2)^{\frac{1}{2}}.
\label{Eq:2.1}
\Eeq
For open sets $\Omega \subset\Rn$ and for non-negative integers $m$, the semiclassical Sobolev space $H^m_{\mathrm{scl}}(\Omega)$ is the space $H^m(\Omega)$ endowed with the following semiclassical norm
\Beq\label{Eq:2.2}
\lVert u \rVert^2_{H^m_{\mathrm{scl}}(\Omega)} = \sum\limits_{|\alpha|\le m} \lVert ( hD)^{\alpha}\, u \rVert^2_{L^2(\Omega)}. \Eeq
For an integer $m\geq 0$, \eqref{Eq:2.1} and \eqref{Eq:2.2} give equivalent norms when $\O=\Rb^n$.

Let $\wt{\O}$ be an open subset containing $\O$ in its interior and let $\vp\in C^{\infty}(\wt{\O})$ with $\n \vp\neq 0$ in $\overline{\O}$. We consider the semi-classical conjugated Laplacian $P_{0,\vp} = e^{\frac{\vp}{h}}\lb -h^2 \Delta\rb e^{\frac{-\vp}{h}}$, where $0<h\ll 1$. The semi-classical principal symbol of this operator $P_{0,\vp}$ is given by $p_{0,\vp}(x,\xi)=|\xi|^2 - |\nabla_x\vp|^2 + 2\I \xi\cdot\nabla_x\varphi$.

\begin{definition}[\cite{KEN}] \label{LCW}
	We say that $\vp\in C^{\infty}(\wt{\O})$ is a limiting Carleman weight for $P_{0,\vp}$
	in $\O$ if
	$\nabla\vp \neq 0$ in $\overline{\O}$ and $\mathrm{\mathrm{Re}}(p_{0,\vp})$, $\mathrm{Im}(p_{0,\vp})$ satisfies
	\[\Big{\{}\mathrm{Re}(p_{0,\vp}), \mathrm{Im}(p_{0,\vp})\Big{\}}(x,\xi)=0 \mbox{ whenever } p_{0,\vp}(x,\xi)=0 \mbox{ for } (x,\xi)\in \widetilde{\O}\times(\mathbb{R}^n\setminus\{0\}),
	\]
	where $\{\cdot,\cdot\}$ denotes the Poisson bracket.
\end{definition}
Examples of such $\vp$ are linear weights
$\vp(x) = \A\cdot x$, where $0\neq \A\in\mathbb{R}^n$ or logarithmic weights $\vp(x)= \log|x-x_0|$ with $x_0 \notin \overline{\widetilde{\O}}$.

As mentioned already, we consider the limiting Carleman weight in this paper to be $\vp(x)= x_1$.
Note that $-\vp$ is also a limiting Carleman weight for $P_{0,\vp}$ whenever $\vp$ is.
We only require this specific limiting Carleman weight in Section \ref{Section_coeff_deter} where we prove the unique determination of coefficients using MRT. For the interior and boundary Carleman estimates, any limiting Carleman weight would work and for this reason, we consider any $\vp$ satisfying Definition \ref{LCW} until Section \ref{Section_coeff_deter}. 

\begin{proposition}[Interior Carleman estimate]\label{Prop: Interior Carleman Estimate}
	Let us consider the operator $\Lc(x,D)$ as in \eqref{operator} with $\lb A^j\rb$ in $C^{\infty}(\overline{\Omega})$ and $q \in L^{\infty}(\Omega)$ and $\vp(x)$ be a limiting Carleman weight for the conjugated semiclassical Laplacian. Then there exists a constant $C=C_{\O, A^j, q}$ such that for $0< h\ll 1 $, we have
	\begin{equation}\label{Car_0}
		h^{m}\lVert u\rVert_{L^2(\Omega)} \leq C \lVert h^{2m}e^{\frac{\vp}{h}}\Lc(x,D)e^{-\frac{\vp}{h}}u\rVert_{H^{-2m}_{\mathrm{scl}}},\quad \mbox{ for all } u\in C^{\infty}_0(\Omega).
	\end{equation}
\end{proposition}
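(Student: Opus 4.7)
The plan is to reduce the estimate to a Carleman inequality for the unperturbed conjugated operator $P_{0,\vp}^{\,m}$, obtained by iterating the classical limiting Carleman weight estimate for $P_{0,\vp}$, and then absorb the lower-order tensor perturbations using the semiclassical calculus. This follows the pattern of \cite{BUK,DOS,KRU1} adapted to the polyharmonic setting, and makes repeated use of the identity $e^{\vp/h}h^{2m}(-\Delta)^m e^{-\vp/h}=P_{0,\vp}^{\,m}$.

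First I would establish the basic estimate $h\|u\|_{L^2(\O)}\leq C\|P_{0,\vp} u\|_{L^2(\O)}$ for $u\in C_0^\infty(\O)$, which is standard for limiting Carleman weights. The principal symbol $p_{0,\vp}(x,\xi)=|\xi|^2-|\n\vp|^2+2\I\xi\cdot\n\vp$ is elliptic off a codimension-two submanifold, so semiclassical elliptic regularity upgrades this to $\|u\|_{H^2_{\mathrm{scl}}}\leq Ch^{-1}\|P_{0,\vp}u\|_{L^2}$. Because $P_{0,\vp}^{\,*}=P_{0,-\vp}$ and $-\vp$ is again a limiting Carleman weight, dualising yields $h\|u\|_{L^2}\leq C\|P_{0,\vp}u\|_{H^{-2}_{\mathrm{scl}}}$. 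Iterating this $m$ times, and using the semiclassical calculus to commute $\langle hD\rangle^s$ past $P_{0,\vp}$ with controlled commutator remainders at each stage, produces
\[
h^m\|u\|_{L^2}\leq C\,\|P_{0,\vp}^{\,m}u\|_{H^{-2m}_{\mathrm{scl}}},\qquad u\in C_0^\infty(\O).
\]

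Next I would absorb the lower-order perturbation $\Lc-(-\Delta)^m$. A direct computation using $he^{\vp/h}D_k e^{-\vp/h}=hD_k+\I\partial_k\vp$ gives
\[
h^{2m}e^{\vp/h}A^{m-j}D^{m-j}e^{-\vp/h}=h^{m+j}\,A^{m-j}\,Q_{m-j}(x,hD;h),
\]
where $Q_{m-j}$ is a semiclassical pseudodifferential operator of order $m-j$ with symbol polynomial in $\xi$ and smoothly depending on $\n\vp$. Hence each such term is bounded in the $H^{-2m}_{\mathrm{scl}}$-norm by $Ch^{m+j}\|u\|_{L^2}$, and the potential $q$ contributes at order $h^{2m}\|u\|_{L^2}$. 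For $j\geq 1$ the factor $h^{m+j}=o(h^m)$, so these terms are absorbed into the LHS as soon as $h$ is small.

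The main obstacle is the top-order perturbation $A^m D^m$, corresponding to $j=0$, which produces an error of size exactly $h^m\|A^m\|_{L^\infty}\|u\|_{L^2}$ and competes with the LHS. To overcome it I would strengthen the Carleman estimate of the first step by replacing $\vp$ with the convex perturbation $\vp_\tau=\vp+\tfrac{h}{2\tau}\vp^2$; the strict convexity of $\vp_\tau$ along $\n\vp$ produces an additional factor $\tau^{-1/2}$ on the LHS when the H\"ormander argument is rerun. Choosing $\tau$ large in terms of $\|A^m\|_{L^\infty}$ defeats the top-order error term, and unwinding the convexification yields the stated inequality with $C=C_{\O,A^j,q}$.
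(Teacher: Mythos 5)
Your strategy is essentially the paper's: the published argument also iterates the gain-of-two-derivatives Carleman estimate for the conjugated semiclassical Laplacian with a convexified weight $\vp_{\ve}=\vp+\frac{h\vp^2}{2\ve}$ (citing Salo--Tzou for the building block rather than rederiving it), takes $s=-2m$, bounds the rank-$(m-j)$ perturbations by $Ch^{m+j}\lVert u\rVert_{L^2}$ in $H^{-2m}_{\mathrm{scl}}$, and uses the extra $\ve^{-m/2}$ on the left to absorb the critical $O(h^m)$ contribution of the leading tensor term.

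There is, however, one concrete error in your final step. You correctly state that the convexification produces the additional factor $\tau^{-1/2}$ on the left-hand side (per application of the Laplacian estimate, hence $\tau^{-m/2}$ after iteration), but you then propose to choose $\tau$ \emph{large}. That is the wrong direction: with $\tau$ large the factor $\tau^{-m/2}$ is small, the left-hand side $h^m\tau^{-m/2}\lVert u\rVert_{L^2}$ shrinks, and the top-order error $Ch^m\lVert u\rVert_{L^2}$ can no longer be absorbed. One must fix $\tau$ (the paper's $\ve$) \emph{small} enough that $C\tau^{m/2}<\tfrac12$, and only then let $h\to 0$ subject to $0<h<\tau$; unwinding the convexification via the $h$-independent bounds $1\le e^{\vp^2/(2\tau)}\le C_\tau$ then gives \eqref{Car_0}. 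With that correction your argument is complete and coincides with the paper's proof.
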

This follows by iterating the following Carleman estimate for the semiclassical Laplacian with a gain of two derivatives proved in \cite{Salo-Tzou}, and then absorbing the lower order terms. We omit the proof here.  We refer the reader to \cite{KRU1,KRU2,Ghosh-Krishnan,BG_19}.

	The Carleman estimate in Proposition \ref{Prop: Interior Carleman Estimate} is valid for the operator $\Lc^*(x,D)$ as well where $\Lc^*$ is the formal $L^2$ dual of the operator $\Lc$. This follows from the fact that the operators $e^{\frac{\vp}{h}}\Lc(x,hD)e^{-\frac{\vp}{h}}$ and $\left(e^{\frac{\vp}{h}}\Lc(x,hD)e^{-\frac{\vp}{h}}\right)^* = e^{\frac{-\vp}{h}}\Lc^*(x,hD)e^{\frac{\vp}{h}}$ have the same principal term with possibly different perturbation coefficients and with $\vp$ replaced by $-\vp$.

\subsection{Construction of CGO solutions}\label{ccs}
Next we use the Carleman estimate \eqref{Car_0} to construct CGO solutions for the equation $\Lc(x,D)u=0$. First let us state an existence result whose proof  is standard; see \cite{DOS,KRU2} for instance. 
\begin{proposition}\label{Prop_Existence}
Let $\Lc$ and $\vp$ be as defined in Proposition \ref{Prop: Interior Carleman Estimate}. Then for any $v \in L^{2}(\Omega)$ and small enough $h>0$ one has $u \in H^{2m}(\Omega)$ such that
\begin{equation}\label{Exist_0}
	e^{-\frac{\vp}{h}}\Lc(x,D)e^{\frac{\vp}{h}} u = v \quad \mbox{in }\Omega, \quad \mbox{with}\quad \|u\|_{H^{2m}_{\mathrm{scl}}(\Omega)} \leq Ch^{m}\|v\|_{L^2(\Omega)}.
\end{equation}
\end{proposition}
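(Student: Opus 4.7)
The plan is to prove the existence statement as a standard functional-analytic consequence of the Carleman estimate in Proposition \ref{Prop: Interior Carleman Estimate}, applied to the formal $L^2$-adjoint. Set $P_{-\varphi} := e^{-\varphi/h}\Lc(x,D)e^{\varphi/h}$, so that its formal adjoint is $P_{-\varphi}^* = e^{\varphi/h}\Lc^*(x,D)e^{-\varphi/h}$. By the remark after Proposition \ref{Prop: Interior Carleman Estimate}, this conjugated adjoint has the same semiclassical principal symbol as $e^{\varphi/h}\Lc(x,D)e^{-\varphi/h}$ (with only lower-order coefficients altered), so applying Proposition \ref{Prop: Interior Carleman Estimate} to $\Lc^*$ yields the dual Carleman estimate
\[
h^m \|w\|_{L^2(\Omega)} \;\leq\; C\, h^{2m}\, \|P_{-\varphi}^* w\|_{H^{-2m}_{\mathrm{scl}}}, \qquad w \in C_0^\infty(\Omega).
\]

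Next I would run the Hahn--Banach/duality scheme. Given $v \in L^2(\Omega)$, consider the subspace $V := P_{-\varphi}^*\bigl(C_0^\infty(\Omega)\bigr) \subset H^{-2m}_{\mathrm{scl}}(\R^n)$, where elements are viewed as distributions on $\R^n$ by extension by zero. The dual Carleman estimate makes $P_{-\varphi}^*$ injective on $C_0^\infty(\Omega)$, so the assignment $T(P_{-\varphi}^* w) := \langle w, v\rangle_{L^2(\Omega)}$ defines a linear functional on $V$, and Cauchy--Schwarz combined with that estimate gives
\[
|T(P_{-\varphi}^* w)| \;\leq\; \|w\|_{L^2(\Omega)}\,\|v\|_{L^2(\Omega)} \;\leq\; C h^m \|v\|_{L^2(\Omega)}\, \|P_{-\varphi}^* w\|_{H^{-2m}_{\mathrm{scl}}}.
\]
Hence $T$ is a bounded functional on $V$ with norm at most $C h^m \|v\|_{L^2(\Omega)}$.

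To conclude, I would extend $T$ by Hahn--Banach to a bounded functional on all of $H^{-2m}_{\mathrm{scl}}(\R^n)$ with the same norm. The $L^2$-pairing identifies $\bigl(H^{-2m}_{\mathrm{scl}}(\R^n)\bigr)^*$ isometrically with $H^{2m}_{\mathrm{scl}}(\R^n)$, producing $u \in H^{2m}_{\mathrm{scl}}(\R^n)$ with $\|u\|_{H^{2m}_{\mathrm{scl}}(\R^n)} \leq C h^m \|v\|_{L^2(\Omega)}$ that represents this extension. Specializing the pairing to $V$ gives $\langle u, P_{-\varphi}^* w\rangle = \langle w, v\rangle$ for every $w \in C_0^\infty(\Omega)$, which is precisely the distributional identity $P_{-\varphi} u = v$ in $\Omega$; restricting $u$ to $\Omega$ then produces the required element of $H^{2m}(\Omega)$ satisfying \eqref{Exist_0}. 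The only non-routine checks are that Proposition \ref{Prop: Interior Carleman Estimate} genuinely transfers to $\Lc^*$ and that the semiclassical $H^{\pm 2m}_{\mathrm{scl}}$ duality delivers the advertised $h$-dependence; neither requires any new analytic input beyond the dual estimate, so the argument reduces to the standard Hahn--Banach scheme familiar from, e.g., \cite{DOS,KRU2}.
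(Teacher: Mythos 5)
Your proposal is correct and is precisely the standard Hahn--Banach/duality argument that the paper itself invokes (it omits the proof, citing \cite{DOS,KRU2}): apply the interior Carleman estimate to $\Lc^*$ (justified by the remark following Proposition \ref{Prop: Interior Carleman Estimate}), define the functional on the range of $P_{-\vp}^*$, extend, and represent in $H^{2m}_{\mathrm{scl}}$. The only thing to tidy is the conjugate-linearity of the $L^2$ pairing when defining $T$, which is routine.
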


Let us now use Proposition \ref{Prop_Existence} to construct a solution for $\Lc(x,D)u=0$ of the form
\begin{align}\label{CGO_0}
	u(x;h) &= e^{\frac{\vp + \I \psi}{h}}\left(a_0(x) + ha_1(x) + \dots + h^{m-1}a_{m-1}(x) + r(x;h)\right),\\
	\notag &=e^{\frac{\vp + \I \psi}{h}}\lb a(x;h) + r(x;h)\rb, \quad\mbox{where } a(x;h)= \sum_{j=0}^{m-1} h^ja_j(x).
\end{align}
Here $\{a_j(x)\}$ and $r(x;h)$ will be determined later.
We choose $\psi(x) \in C^{\infty}(\overline{\Omega})$ in such a way that
$p_{0,\vp}(x,\nabla\psi) =0$. This implies $
|\nabla \vp| = |\nabla \psi|$  and $\nabla\vp\cdot\nabla\psi = 0$ in $\Omega$.

We calculate the term $\Lc(x,D)e^{\frac{\vp+i\psi}{h}}a(x;h)$ in $\Omega$.
Due to the choices of $\vp$ and $\psi$ we have $\nabla(\vp+\I\psi) \cdot \nabla(\vp+\I\psi) = 0$ in $\Omega$ and thus we obtain

\begin{equation}\label{CGO_1}
\begin{aligned}
e^{-\frac{\vp+i\psi}{h}}\Lc(x,D) e^{\frac{\vp+i\psi}{h}}a(x;h)
=& \left(-\frac{1}{h}T - \Delta_x \right)^{m} a(x;h) \\
&- \sum_{i_1,\cdots,i_{m-2}=1}^n \overline{A}^{\:m-2}_{i_1\dots i_{m-2}}H^{m-2}_{i_1\dots i_{m-2}}\left(\frac{1}{h}T + \Delta_x\right)a(x;h)\\
&+ \sum_{j=1}^{m-1}\sum_{i_1,\cdots,i_j=1}^n A^j_{i_1\dots i_{j}}H^j_{i_1\dots i_j}a(x;h) + q(x)a(x;h),
\end{aligned}
\end{equation}
where
\begin{equation}\label{term_H}
\begin{gathered}
	T = 2\nabla_x(\vp+\I\psi)\cdot\nabla_x + \Delta_x(\vp+\I\psi),\\
	H^j_{i_1\dots i_j} 
	= e^{-\frac{\vp+\I\psi}{h}}D^j_{i_1\dots i_j}e^{\frac{\vp+\I\psi}{h}}
	=\prod_{k=1}^{j} \left(\frac{1}{h}D_{i_k}(\vp+\I\psi) + D_{i_k}\right). 
\end{gathered}
\end{equation}
Let us introduce one more notation $M_j$ denoting the operator with coefficient $\mathcal{O}(h^{-m+j})$ in
\[	\left(-\frac{1}{h}T-\Delta_x\right)^m - \sum_{i_1,\cdots,i_{m-2}=1}^n \overline{A}^{\:m-2}_{i_1\dots i_{m-2}}H^{m-2}_{i_1\dots i_{m-2}}\left(\frac{1}{h}T + \Delta_x\right)\\
+ \sum_{j=1}^{m-1}\sum_{i_1,\cdots,i_j=1}^n A^j_{i_1\dots i_{m-2}}H^j_{i_1\dots i_j}.
\]
We have 
\[\begin{aligned}
M_0 =& (-T)^m,\\
M_1 =& \sum_{l=0}^{m-1}(-1)^{m} T^{m-1-l}\circ \Delta_x\circ T^{l}
- \sum_{i_1,\cdots,i_{m-2}=1}^n \overline{A}^{\:m-2}_{i_1\dots i_{m-2}}\left(\prod_{k=1}^{m-2}D_{i_k}(\vp+\I\psi)\right)\circ T\\
&+ \sum_{i_1,\cdots,i_{m-1}=1}^n A^{m-1}_{i_1\dots i_{m-1}}\left(\prod_{k=1}^{m-1}D_{i_k}(\vp+\I\psi)\right),\\
&\vdots\\
M_{m-1} =& \sum_{k=0}^{m-1}(-1)^{m} (\Delta_x)^k\circ T \circ (\Delta_x)^{m-1-k}\\
&- \sum_{i_1,\cdots,i_{m-2}=1}^n \overline{A}^{\:m-2}_{i_1\dots i_{m-2}}\left[D^{m-2}_{i_1\dots i_{m-2}}\circ T + \sum_{l=0}^{m-3} D^l_{i_1\dots i_{l}} \left(D_{i_{l+1}}(\vp+\I\psi)D^{m-3-l}_{i_{l+2}\dots i_{m-3}}\right) \circ\Delta_x\right]\\
&+ \sum_{i_1,\cdots,i_{m-1}=1}^n A^{m-1}_{i_1\dots i_{m-1}}\left(\sum_{l=0}^{m-2} D^l_{i_1\dots i_{l}}\left(D_{i_{l+1}}(\vp+\I\psi)D^{m-2-l}_{i_{l+1}\dots i_{m-2}}\right)\right).
\end{aligned}
\]
Separating orders of $h$ in \eqref{CGO_1} we obtain the following set of equations for $a_j(x)$ as
\begin{eqnarray}
\mathcal{O}(h^{-m}):&\quad 
T^m a_0(x) =& 0, \quad \mbox{in }\Omega,	\label{CGO_2.0}\\
\mathcal{O}(h^{-m+j}):&\quad 
T^m a_j(x) =& -\sum_{k=1}^{j} M_{k} a_{j-k}(x), \quad \mbox{in }\Omega, \quad 1\leq j\leq m-1.	\label{CGO_2.j}
\end{eqnarray}
Existence of smooth solutions of \eqref{CGO_2.j} is well-known; see \cite{DOS}.
We give an explicit form for the smooth solution $a_0(x)$ solving \eqref{CGO_2.0} in the next section, which will play an important role in obtaining the generalized MRT of the coefficients.
\begin{remark}\label{Rem_isotropy_1}
    Note that in \eqref{CGO_1}, we crucially use the fact that $A^m=i_{\delta}\overline{A}^{\: m-2}$, where $\overline{A}^{\: m-2}$ is a symmetric $m-2$ tensor. For a general symmetric $m$ tensor $A^m$, we would get an inhomogeneous transport equation for the amplitude $a_0$ instead of the one in \eqref{CGO_2.0}. Getting a homogeneous equation \eqref{CGO_2.0} helps us to obtain a large enough class of solutions $a_0$ as in Lemma \ref{Prop4.1} and this plays a very important role in the proof of Theorem \ref{mainresult_1}.
\end{remark}


Choosing $a_0(x),\cdots,a_{m-1}(x) \in C^{\infty}(\Omega)$ satisfying \eqref{CGO_2.0}, \eqref{CGO_2.j}, we see that
\[  e^{-\frac{\vp+\I\psi}{h}}\Lc(x,D)e^{\frac{\vp+\I\psi}{h}}a(x;h)\simeq\Oc(1).
\]
Now if $u(x;h)$ as in \eqref{CGO_0} is a solution of $\Lc(x,D)u(x;h)=0$ in $\Omega$, we see that
\begin{align}\label{CGO_3}
0 = e^{-\frac{\vp+\I\psi}{h}}\Lc(x,D) u = e^{-\frac{\vp+\I\psi}{h}}\Lc(x;D) e^{\frac{\vp+\I\psi}{h}} \left(a(x;h) + r(x;h)\right).
\end{align}
This implies 
\begin{align}
e^{-\frac{\vp+\I\psi}{h}}\Lc(x,D) e^{\frac{\vp+\I\psi}{h}}r(x;h) = F(x;h), \quad \mbox{for some }F(x;h) \in L^2(\Omega), \quad \mbox{for all } h>0 \mbox{ small}.
\end{align}
By our choices, $a_j(x)$ annihilates all the terms of order $h^{-m+j}$ in $e^{-\frac{\vp+\I\psi}{h}}\Lc(x;D) e^{\frac{\vp+\I\psi}{h}}a(x;h)$ in $\Omega$ for $j=0,\dots,m-1$. Thus we get $\|F(x,h)\|_{L^2(\Omega)} \leq C$, where $C>0$ is uniform in $h$ for $h\ll 1$. 

Using Proposition \ref{Prop_Existence} we have the existence of $r(x;h) \in H^{2m}(\Omega)$ solving
\[e^{-\frac{\vp+\I\psi}{h}}\Lc(x,D)  e^{\frac{\vp+\I\psi}{h}}r(x;h) = F(x;h),
\]
with the estimate
\[	\|r(x;h)\|_{H^{2m}_{\mathrm{scl}}(\Omega)} \leq C h^m, \quad \mbox{for }h>0\mbox{ small enough}.
\]
Let $\widetilde{\Lc}(x,D)$ be the operator in \eqref{operator} with coefficients $(\widetilde{A}^j)_{1\leq j\leq m}$ and $\widetilde{q}$ and we denote the formal $L^2$ adjoint of $\wt{\Lc}$ by $\wt{\Lc}^{*}$. 
Summing up the above analysis, we state the following result on the existence of CGO solutions for the operators $\Lc(x,D)$ and $\wt{\Lc}^{*}(x,D)$ in $\Omega$.
\begin{proposition}\label{Prop_solvibility}
Let $h>0$ small enough and $\vp,\psi \in C^{\infty}(\overline{\Omega})$ satisfy $p_{0,\vp}(x,\nabla\psi) =0$. There are suitable choices of $a_0(x), \dots, a_{m-1}(x), b_0(x), \dots, b_{m-1}(x) \in C^{\infty}(\overline{\Omega})$ 
and $r(x;h), \wt{r}(x;h) \in H^{2m}(\Omega)$ such that
\begin{equation}\label{cgos}
\begin{aligned}
u(x;h) =& e^{\frac{\vp + \I \psi}{h}}\left(a_0(x) + ha_1(x) + \dots + h^{m-1}a_{m-1}(x) + r(x;h)\right),\\
v(x;h) =& e^{\frac{-\vp + \I \psi}{h}}\left(b_0(x) + hb_1(x) + \dots + h^{m-1}b_{m-1}(x) + \wt{r}(x;h)\right)
\end{aligned}
\end{equation}
solving $\Lc(x,D)u(x;h) = 0 = \wt{\Lc}^{*}(x,D)v(x;h)$ in $\Omega$ for $h>0$ small enough, with the estimates $\|r(x;h)\|_{H^{2m}_{\mathrm{scl}}(\Omega)}, \|\wt{r}(x;h)\|_{H^{2m}_{\mathrm{scl}}(\Omega)} \leq C h^{m}$.
Here, in particular, $a_0(x)$ and $b_0(x)$ solve the transport equations
\begin{equation}\label{Transport_0}
T^m a_0(x) = 0 \quad \mbox{and}\quad \overline{T}^m b_0(x) = 0, \quad \mbox{in }\Omega.
\end{equation}
\end{proposition}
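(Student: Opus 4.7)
The plan is to take the construction essentially already laid out from \eqref{CGO_0} through \eqref{CGO_3} and package it as a proof. First I would substitute the ansatz \eqref{CGO_0} into $\Lc(x,D)u=0$ and conjugate by $e^{-(\vp+\I\psi)/h}$. Choosing $\psi \in C^{\infty}(\overline{\O})$ so that $p_{0,\vp}(x,\n\psi)=0$ enforces $\n(\vp+\I\psi)\cdot\n(\vp+\I\psi)=0$, which eliminates the $\Oc(h^{-2m})$ term in $e^{-(\vp+\I\psi)/h}(-h^2\Delta)^m e^{(\vp+\I\psi)/h}a$ and yields the expansion \eqref{CGO_1} in nonnegative powers of $h$ when multiplied by $h^{2m}$. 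Collecting like powers of $h$ in \eqref{CGO_1} produces the hierarchy of transport equations \eqref{CGO_2.0}--\eqref{CGO_2.j}.

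Next I would solve the transport equations inductively. The leading-order equation is the homogeneous equation $T^m a_0 = 0$, whose smooth solutions are plentiful; the specific choice of $a_0$ that we need for the MRT analysis is deferred to Lemma \ref{Prop4.1} of the next section, but for the purpose of this proposition any smooth solution suffices. The lower-order equations $T^m a_j = -\sum_{k=1}^{j} M_k a_{j-k}$ are inhomogeneous linear transport equations along the characteristics of $T$; since the right-hand sides are smooth, classical ODE/transport theory in an appropriate tubular neighborhood supplies smooth solutions $a_j \in C^{\infty}(\overline{\O})$ for $1 \le j \le m-1$. This is the step that depends on \cite{DOS} and is standard in this setting.

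Having fixed the amplitudes $a_0,\dots,a_{m-1}$, substituting back into \eqref{CGO_3} reduces the problem for $u$ to finding $r(x;h) \in H^{2m}(\O)$ solving
\begin{equation*}
e^{-(\vp+\I\psi)/h}\Lc(x,D) e^{(\vp+\I\psi)/h} r(x;h) = F(x;h),
\end{equation*}
where $F(x;h)$ collects the remaining $\Oc(1)$ terms after cancellation of powers $h^{-m+j}$, $0\le j\le m-1$. By construction $\|F(\cdot;h)\|_{L^{2}(\O)} \leq C$ uniformly in $h$, so Proposition \ref{Prop_Existence} applied with $v=F$ (after the harmless sign change $\vp \mapsto \vp+\I\psi$, which only alters the real part of the Carleman weight that drives the estimate) furnishes $r(x;h) \in H^{2m}(\O)$ with $\|r(\cdot;h)\|_{H^{2m}_{\mathrm{scl}}(\O)} \leq C h^{m}$.

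The construction of $v$ is essentially identical: the Carleman estimate of Proposition \ref{Prop: Interior Carleman Estimate} holds for $\Lc^*$ with weight $-\vp$ (as noted after that proposition), the conjugated operator $e^{(\vp-\I\psi)/h}\wt{\Lc}^{*}(x,hD)e^{(-\vp+\I\psi)/h}$ has the same principal symbol $p_{0,\vp}$ with $\vp$ replaced by $-\vp$, and $\psi$ still satisfies the required eikonal relation. So running the same procedure with $\vp\mapsto -\vp$ produces smooth amplitudes $b_0,\dots,b_{m-1}$ solving the corresponding transport equations, of which the leading one is $\overline{T}^m b_0 = 0$, together with a remainder $\wt{r}(x;h)$ satisfying the claimed bound. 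The one subtlety worth flagging is the reliance on the partial-isotropy assumption $A^m = i_{\delta}\overline{A}^{\,m-2}$: without it the expansion in \eqref{CGO_1} would acquire an additional $\Oc(h^{-m})$ term proportional to $A^m$ applied to $\n(\vp+\I\psi)^{\otimes m}$, turning \eqref{CGO_2.0} into an inhomogeneous equation and shrinking the available class of leading amplitudes $a_0$; the homogeneity of \eqref{CGO_2.0} is precisely what underwrites the degree of freedom exploited in the coefficient determination argument of Section \ref{determination}.
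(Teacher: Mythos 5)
Your proposal is correct and follows essentially the same route as the paper, whose "proof" of this proposition is precisely the preceding discussion in Section \ref{ccs}: the conjugation and expansion \eqref{CGO_1}, the transport hierarchy \eqref{CGO_2.0}--\eqref{CGO_2.j} solved via the standard results cited from \cite{DOS}, and the remainder obtained from Proposition \ref{Prop_Existence}, with the adjoint case handled by replacing $\vp$ with $-\vp$. Your remark on the role of the partial-isotropy assumption matches Remark \ref{Rem_isotropy_1}.
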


\subsection{Solutions of transport equations} 
We begin with a few computations already done in \cite{DOS,DKSU}.
First let us assume that the origin $0 \in \Rb^n$ is outside $\overline{\Omega}$.
Let us write
\begin{equation}\label{cylindrical_coordinates}
	x=(x_1,\cdots,x_n)=(x_1,x') = (x_1,r,\theta) \in \R^n, \quad n\geq 3,
\end{equation}
where $(r,\theta)$ is the polar coordinate representation of $x^{\prime} \in \Rb^{n-1}$, centered at $0$, with $r=|x^{\prime}|$, $\theta \in \mathbb{S}^{n-2}$ such that $x'=r\theta$.
Note that, with a translation of coordinates we can choose any point $ (x_1,x'_0)$ to be the origin $0$ as long as $(x_1,x_0')\notin \overline{\Omega}$. Let us denote the complex variable $z=x_1+\I r$. 
Define the $\theta$-slice of $\Omega$ in the complex plane: 
\begin{equation}\label{Omega_theta}
    \Omega_{\theta} := \{(x_1+ir) \in \Cb \st (x_1,r,\theta) \in \Omega\}.
\end{equation}
We choose
\begin{equation}\label{cylindrical_coordinates_1}
	\vp = x_1 = \text{Re}(z), \qquad 
	\psi = r = \text{Im}(z).
\end{equation}
In this new system of coordinates, we have
\[\begin{aligned}
	&\vp+\I \psi=z; \qquad
	&&\n_{x}(\vp+\I \psi)=e_1+ie_r, \quad \mbox{ where } e_r=\frac{x'}{|x'|};\\
	& \Delta (\vp+\I \psi) = \frac{-2(n-2)}{(z-\bar{z})};\qquad
	&&T = 4\left(\PD_{\overline{z}} - \frac{(n-2)}{2(z-\bar{z})}\right).
\end{aligned}\]

The first transport equations \eqref{Transport_0} in $\Omega$ for $a_0(x)$ and $\wt{a_0}(x)$ reads in this new coordinate system as
\begin{eqnarray}
	T^m a_0 = 4^m\left(\partial_{\overline{z}}- \frac{(n-2)}{2(z-\bar{z})}\right)^m a_0 = 0,\label{transport_1}\\
	\overline{T^m} b_0 = \overline{4^m\left(\partial_{\overline{z}} - \frac{(n-2)}{2(z-\bar{z})}\right)^m} b_0 = 0.\label{transport_2}
\end{eqnarray}

Observe that if $a_0$ is a solution of \eqref{transport_1} then so is $a_0(x_1,r)g(\theta)$ for any smooth function $g(\theta)$. Similarly for \eqref{transport_2}, $b_0(z)g(\theta)$ is a solution if $b_0$ solves \eqref{transport_2} and $g$ is any smooth function.
Therefore, it is reasonable to study the general solutions $a_0$ and $b_0$ as functions of the $z$ variable. 
\begin{lemma}\label{Prop4.1}
	The general solution of \eqref{transport_1} 
	is $a_0(z) = \sum_{k=1}^m(z-\bar{z})^{(2k-n)/2} h_k(z)$, where for each $k$ with $1\le k\le m$,  $h_k(z)$ is an arbitrary holomorphic function.
\end{lemma}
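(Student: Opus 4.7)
My plan is an induction on $m$ anchored on the single algebraic identity
\[
L\bigl[(z-\bar z)^{(2\ell-n)/2} h(z)\bigr] = -(\ell-1)\,(z-\bar z)^{(2(\ell-1)-n)/2} h(z)
\qquad (\ell\ge 1,\ h\ \text{holomorphic}),
\]
where $L := \partial_{\bar z} - \tfrac{n-2}{2(z-\bar z)}$, so that $T^m = 4^m L^m$. This identity follows in one line from $\partial_{\bar z}(z-\bar z)^{\alpha} = -\alpha(z-\bar z)^{\alpha-1}$ together with $\partial_{\bar z} h = 0$. It already proves one inclusion: starting from $(z-\bar z)^{(2k-n)/2} h_k(z)$ with $1\le k\le m$, each application of $L$ lowers the index $\ell$ by one and multiplies by $-(\ell-1)$, so at the $k$-th step the multiplier $(k-k) = 0$ kicks in and $L^m$ annihilates the function.

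\textbf{Base case $m = 1$.} I would use the factorization $L a_0 = (z-\bar z)^{-(n-2)/2}\,\partial_{\bar z}\bigl[(z-\bar z)^{(n-2)/2} a_0\bigr]$ (obtained by distributing $\partial_{\bar z}$), so that $L a_0 = 0$ is equivalent to the holomorphy of $(z-\bar z)^{(n-2)/2} a_0$ on the slice $\Omega_\theta$. Hence $a_0 = (z-\bar z)^{(2-n)/2} h_1(z)$, which is exactly the $k = 1$ case.

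\textbf{Inductive step.} Assume the lemma for $m-1$ and suppose $L^m a_0 = 0$. Set $b := L a_0$; then $L^{m-1} b = 0$, so by the inductive hypothesis $b = \sum_{k=1}^{m-1}(z-\bar z)^{(2k-n)/2} g_k(z)$ with $g_k$ holomorphic on $\Omega_\theta$. Reading the key identity backwards (apply it with $\ell = k+1$ and $h = -g_k/k$) produces a particular solution
\[
a_p = -\sum_{k=1}^{m-1}\frac{1}{k}\,(z-\bar z)^{(2(k+1)-n)/2} g_k(z),\qquad L a_p = b.
\]
Then $a_0 - a_p \in \ker L$, which by the base case equals $(z-\bar z)^{(2-n)/2} h_1(z)$ for some holomorphic $h_1$. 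Reassembling, $a_0 = \sum_{k=1}^{m} (z-\bar z)^{(2k-n)/2} h_k(z)$, with $h_1$ arbitrary and $h_k = -g_{k-1}/(k-1)$ for $2\le k\le m$.

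\textbf{Main obstacle.} The only genuine subtlety is making sense of the fractional power $(z-\bar z)^{(2k-n)/2}$ when $n$ is odd. I would resolve it by using the freedom noted just before the introduction of these coordinates to translate the cylindrical origin so that the line $\{(t,x_0'):t\in\R\}$ misses $\overline{\Omega}$; then $r = \operatorname{Im} z > 0$ throughout $\Omega_\theta$, hence $(z-\bar z) = 2ir$ remains in the open upper half-plane and a single branch of the power is fixed unambiguously. Beyond this branch-choice, the argument is a clean elementary induction with no further obstacles.
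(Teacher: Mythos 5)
Your proof is correct and follows essentially the same route as the paper's: an induction on $m$ via $b:=La_0$, with the base case handled by the integrating factor $(z-\bar z)^{(n-2)/2}$. Your packaging of the inductive step as "particular solution plus kernel of $L$" is just a mild reorganization of the paper's step of absorbing the right-hand side into a $\partial_{\bar z}$-exact term, and your explicit verification of the converse inclusion (that each term is annihilated by $T^m$) is a small completeness bonus over the paper's write-up.
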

\begin{proof}
	We prove the result by induction. For $m=1$, \eqref{transport_1} is 
\[ \left(\partial_{\bar{z}}- \frac{(n-2)}{2(z-\bar{z})}\right) a_0(z) = 0.\]
A direct calculation shows that 
\[
0=\left(\partial_{\bar{z}}- \frac{(n-2)}{2(z-\bar{z})}\right) a_0(z)= \PD_{\bar{z}} \left( (z-\bar{z})^{(n-2)/2} a_0  \right).
\] 
This implies $(z-\bar{z})^{(n-2)/2} a_0 $ is a holomorphic function.
Therefore $a_0(z) = (z-\bar{z})^{(2-n)/2} h(z)$ for a homolomorphic function $h$. This completes the proof for $m=1$.
Assume that the statement of the lemma is true for some $m\geq 1$. Now we show that any solution of $ T^{m+1} a_0=0 $ is of the form $ a_0(z)= \sum\limits_{k=1}^{m+1}(z-\bar{z})^{(2k-n)/2} h_{k}(z)$ for $ m+1 $ holomorphic functions $ h_k(z),\, 1\le k\le m+1 $.

Let $Ta_0(z)=b_0(z)$ and we have  $T^m b_0(z)=0 $, since  $T^{m+1} a_0(z)=0 $. By induction hypothesis we have that 
\[ b_0(z) = \sum\limits_{k=1}^{m}(z-\bar{z})^{(2k-n)/2} h_{k}(z). \]
Multiplying  the equation $  Ta_0(z)=b_0(z) $ by $ (z-\bar{z})^{(n-2)/2} $ we obtain
\begin{align*}
&(z-\bar{z})^{(n-2)/2}Ta_0(z)= (z-\bar{z})^{(n-2)/2}b_0(z)\\
&4\PD_{\bar{z}} \left( (z-\bar{z})^{(n-2)/2} a_0(z) \right)= \sum\limits_{k=1}^{m}(z-\bar{z})^{k-1} h_{k}(z)\\
&4\PD_{\bar{z}} \left( (z-\bar{z})^{(n-2)/2} a_0(z) \right)=- \PD_{\bar{z}}\left( \sum\limits_{k=1}^{m} \frac{1}{k}\,(z-\bar{z})^{k} h_{k}(z)\right)\\
&\PD_{\bar{z}} \left( (z-\bar{z})^{(n-2)/2} a_0(z) + \sum\limits_{k=1}^{m} \frac{1}{4k}\,(z-\bar{z})^{k} h_{k}(z)\right)=0.
\end{align*}
This implies $ (z-\bar{z})^{(n-2)/2} a_0(z) + \sum\limits_{k=1}^{m} \frac{1}{4k}\,(z-\bar{z})^{k} h_{k}(z)$ is holomorphic. Denoting this as $h_{0}(z)$, we get,  
\[a_0(z) =  \sum\limits_{k=1}^{m+1}\,(z-\bar{z})^{(2k-n)/2} \tilde{h}_{k}(z),  \]
here $ \tilde{h}_1(z)= h_0(z) $ and $ \tilde{h}_i(z)= -\frac{1}{4(i-1)}h_{i-1}(z) $ for $ 2\le i\le m+1 $.
By induction, we have shown that $\sum\limits_{k=1}^{m}\,(z-\bar{z})^{(2k-n)/2} \tilde{h}_{k}(z)$, where $\tilde{h}_{k}$'s  are holomorphic, is the general solution of \eqref{transport_1}. This finishes the proof.
\end{proof}

\subsection{Boundary Carleman estimates}
Recall that in $\Nc_{\Gamma_F}$, the Neumann data is not given over the entire $\partial\Omega$. 
In order to estimate the solution on the part of the boundary where we do not have information, we use a boundary Carleman estimate. Unlike the interior Carleman estimate which follows in a straightforward manner from prior work, the boundary Carleman estimate is slightly more involved, since we are dealing with anisotropic coefficients. Specifically, we need to use semiclassical elliptic regularity estimates to absorb lower order terms. 
Let us consider $\Lc(x,D)$ as in \eqref{operator}. 
We prove the following boundary Carleman estimate.
\begin{proposition}\label{bdy_Car_Est}
Let $\Lc(x,D)$ be as in  \eqref{operator} and $\vp$ be a limiting Carleman weight for the semiclassical Laplacian on $\widetilde{\O}$. Then for $u \in \mathcal{D}(\mathcal{L}(x,D))$ (see \eqref{domain}) and $0<h\ll 1$, we have
\begin{equation}\label{bdy_carlemanestimate}
\begin{aligned}
\sum_{k=0}^{m-1} h^{\frac{3}{2}+k}\left\lVert |\partial_{\nu}\vp|^{\frac{1}{2}} e^{-\frac{\vp}{h}}\partial_{\nu}(-h^2\Delta)^{m-k-1}u \right\rVert_{L^2(\partial_{+}\Omega)}
+ \sum_{k=0}^{[\frac{m}{2}]} h^{m-k} \left\lVert e^{-\frac{\vp}{h}}(-h^2\Delta)^ku \right\rVert_{H^1_{\mathrm{scl}}(\Omega)}
\\
\leq
Ch^{2m}\left\lVert e^{-\frac{\vp}{h}}\mathcal{L}(x,D)u\right\rVert_{L^2(\Omega)}
+ C\sum_{k=0}^{m-1} h^{\frac{3}{2}+k}\left\lVert |\partial_{\nu}\vp|^{\frac{1}{2}} e^{-\frac{\vp}{h}}\partial_{\nu}(-h^2\Delta)^{m-k-1}u \right\rVert_{L^2(\partial_{-}\Omega)},
\end{aligned}
\end{equation}
	where the constant $C$ is independent of  both $u$ and $h$. 
\end{proposition}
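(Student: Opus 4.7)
The plan is to reduce this estimate to the known boundary Carleman estimate for the semiclassical Laplacian (of the Bukhgeim--Uhlmann / Kenig--Sj\"ostrand--Uhlmann type), to iterate it $m$ times using the boundary conditions built into $\mathcal{D}(\mathcal{L}(x,D))$, and then to absorb the anisotropic perturbations (of orders ranging from $1$ up to $m$) using semiclassical elliptic regularity. The base estimate to start from is the one saying that, for $w\in H^2(\Omega)$ with $w|_{\partial\Omega}=0$ and $0<h\ll 1$,
\begin{equation*}
h^{3/2}\bigl\| |\PD_\nu\vp|^{1/2} e^{-\vp/h}\PD_\nu w\bigr\|_{L^2(\PD_+\O)}
+ h\, \| e^{-\vp/h} w\|_{H^1_{\mathrm{scl}}(\O)}
\le C h^2 \|e^{-\vp/h}(-h^2\Delta)w\|_{L^2(\O)}
+ Ch^{3/2}\bigl\| |\PD_\nu\vp|^{1/2} e^{-\vp/h}\PD_\nu w\bigr\|_{L^2(\PD_-\O)}.
\end{equation*}

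The first step is the iteration. Set $w_k := (-h^2\Delta)^k u$ for $k=0,1,\ldots,m-1$; by \eqref{domain} each $w_k$ has vanishing Dirichlet trace on $\PD\O$, so the base estimate applies to every $w_k$. Multiplying the $k$-th inequality by an appropriate power of $h$ and summing, the interior terms $h\,\|e^{-\vp/h} w_k\|_{H^1_{\mathrm{scl}}}$ generated on the left at level $k$ absorb, after raising by a factor of $h$, the right-hand source $Ch^2\|e^{-\vp/h} w_{k+1}\|_{L^2}$ generated at level $k+1$. Telescoping in this way produces
\begin{equation*}
\sum_{k=0}^{m-1} h^{3/2+k} \bigl\| |\PD_\nu\vp|^{1/2} e^{-\vp/h} \PD_\nu(-h^2\Delta)^{m-k-1} u\bigr\|_{L^2(\PD_+\O)}
+ \sum_{k=0}^{\lfloor m/2\rfloor} h^{m-k} \|e^{-\vp/h}(-h^2\Delta)^k u\|_{H^1_{\mathrm{scl}}(\O)}
\le C \|e^{-\vp/h}(-h^2\Delta)^m u\|_{L^2(\O)} + (\PD_-\text{-boundary terms}),
\end{equation*}
where keeping only $k\le\lfloor m/2\rfloor$ on the left is natural because these interior norms will be used to absorb perturbations of order up to $m$.

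The second step is to replace $(-h^2\Delta)^m u$ by $h^{2m}\mathcal{L}(x,D)u$. Writing
\begin{equation*}
(-h^2\Delta)^m u = h^{2m}\mathcal{L}(x,D)u - h^{2m}\!\!\sum_{j=0}^{m-1}\sum_{i_1,\ldots,i_{m-j}} A^{m-j}_{i_1\cdots i_{m-j}} D^{m-j}_{i_1\cdots i_{m-j}} u - h^{2m} q\,u,
\end{equation*}
each lower-order perturbation gives a contribution bounded by
$h^{2m}\|A^{m-j} D^{m-j} u\|_{L^2}\le C h^{m+j}\|u\|_{H^{m-j}_{\mathrm{scl}}(\O)}$ for $j=0,1,\ldots,m-1$, plus $Ch^{2m}\|u\|_{L^2}$ for the potential. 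To bound these in terms of the interior LHS, I would apply the semiclassical elliptic regularity estimate from Appendix~\ref{Appendix_elliptic_regu} for the operator $(-h^2\Delta)^k$ on $\mathcal{D}(\mathcal{L}(x,D))$, which gives $\|u\|_{H^{2k+1}_{\mathrm{scl}}}\le C\bigl(\|(-h^2\Delta)^k u\|_{H^1_{\mathrm{scl}}} + \|u\|_{L^2}\bigr)$. Choosing $k=\lceil (m-j-1)/2\rceil\le \lfloor m/2\rfloor$ for each $j$, the perturbation bound becomes $Ch^{m+j}\cdot h^{-(m-k)}\cdot h^{m-k}\|(-h^2\Delta)^k u\|_{H^1_{\mathrm{scl}}}$, which carries an extra positive power of $h$ relative to the LHS term $h^{m-k}\|(-h^2\Delta)^k u\|_{H^1_{\mathrm{scl}}}$, and is therefore absorbed for $h$ sufficiently small.

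The main obstacle is the last absorption: the top-order perturbation $A^m D^m u$ forces us to control $u$ in $H^m_{\mathrm{scl}}(\O)$ by the interior norms on the LHS, which only involve $H^1_{\mathrm{scl}}$ of $(-h^2\Delta)^k u$ for $k\le\lfloor m/2\rfloor$. This is exactly what requires the semiclassical elliptic regularity estimate with the correct dependence on $h$; the classical (non-semiclassical) version is not enough, since we need powers of $h$ to track so that absorption really yields a small constant. This is precisely the estimate established in Appendix~\ref{Appendix_elliptic_regu}, and together with the iterated base Carleman estimate it closes the argument. The partial isotropy assumption $A^m = i_\delta \overline{A}^{\,m-2}$ plays no role in this boundary estimate itself; the price for handling a fully anisotropic $A^m$ here is only that one needs the semiclassical elliptic regularity up to order $m$, and this is what \eqref{bdy_carlemanestimate} encodes.
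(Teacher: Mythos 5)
Your overall strategy --- iterate the boundary Carleman estimate for the semiclassical Laplacian over $w_k=(-h^2\Delta)^k u$ using the Navier boundary conditions built into $\Dc(\Lc(x,D))$, then absorb the perturbations via semiclassical elliptic regularity --- is the same as the paper's (the paper simply cites the already-iterated estimate \eqref{BCE_0} from Ghosh--Krishnan as its starting point). However, there is a genuine gap in your treatment of the top-order perturbation, and your closing claim that the partial isotropy $A^m=i_\delta\overline{A}^{\,m-2}$ ``plays no role in this boundary estimate'' is exactly backwards: it is what makes the absorption of the order-$m$ term possible. The $h$-bookkeeping is borderline precisely at the bottom of the elliptic-regularity chain. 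What the appendix proves is the one-step estimate $\|u\|_{H^{j+2}_{\mathrm{scl}}}\le C(\|(-h^2\Delta)u\|_{H^{j}_{\mathrm{scl}}}+\|u\|_{L^2})$; iterating it under the boundary conditions yields $\|e^{-\vp/h}u\|_{H^k_{\mathrm{scl}}}\le C\sum_{j=0}^{[k/2]}\|e^{-\vp/h}(-h^2\Delta)^j u\|_{H^1_{\mathrm{scl}}}$ (the paper's \eqref{claim}), with \emph{all} intermediate powers of the Laplacian present, not just the top one plus $\|u\|_{L^2}$ as you assert. Feeding a fully anisotropic $A^mD^m u$ through this gives $h^{2m}\|e^{-\vp/h}A^mD^mu\|_{L^2}\le Ch^m\sum_{j=0}^{[m/2]}\|e^{-\vp/h}(-h^2\Delta)^j u\|_{H^1_{\mathrm{scl}}}$, whose $j=0$ summand $Ch^m\|e^{-\vp/h}u\|_{H^1_{\mathrm{scl}}}$ carries exactly the same power of $h$ as the $k=0$ term on the left of \eqref{bdy_carlemanestimate}; since $C$ is not small and \eqref{BCE_0} has no convexification gain, this term cannot be absorbed. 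The same obstruction survives in your formulation as the $Ch^m\|u\|_{L^2}$ remainder, which you silently dropped.

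The paper avoids this by using $A^m=i_\delta\overline{A}^{\,m-2}$ to write the top-order term as $\overline{A}^{\,m-2}D^{m-2}\circ\Delta u$ and applying the regularity chain to $w=(-h^2\Delta)u$ rather than to $u$ (see \eqref{BCE_1}). This shifts the resulting sum to $\sum_{j=1}^{[m/2]}\|e^{-\vp/h}(-h^2\Delta)^j u\|_{H^1_{\mathrm{scl}}}$, so every summand gains at least one extra factor of $h$ over the corresponding left-hand term and absorption goes through. For the genuinely lower-order perturbations of order $k\le m-1$ your accounting is fine, since there the prefactor is $h^{2m-k}$ with $2m-k\ge m+1$, and even the $j=0$ summand has a spare factor of $h$. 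So the proof closes only after the isotropy of $A^m$ is exploited, and your proposal as written does not close.
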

\begin{proof}
We start by recalling the boundary Carleman estimate for the conjugated semiclassical operator $e^{-\vp/h}(-h^2\Delta)^me^{\vp/h}$ in \cite[Equation (2.23)]{Ghosh-Krishnan}:
\begin{equation}\label{BCE_0}
\begin{aligned}
\sum_{k=0}^{m-1} h^{\frac{3}{2}+k}\left\lVert |\partial_{\nu}\vp|^{\frac{1}{2}} e^{-\frac{\vp}{h}}\partial_{\nu}(-h^2\Delta)^{m-k-1}u \right\rVert_{L^2(\partial_{+}\Omega)}
+ \sum_{k=0}^{[\frac{m}{2}]} h^{m-k} \left\lVert e^{-\frac{\vp}{h}}(-h^2\Delta)^ku \right\rVert_{H^1_{\mathrm{scl}}(\Omega)}
\\
\leq
C\left\lVert e^{-\frac{\vp}{h}}(-h^2\Delta)^mu\right\rVert_{L^2(\Omega)}
+ C\sum_{k=0}^{m-1} h^{\frac{3}{2}+k}\left\lVert |\partial_{\nu}\vp|^{\frac{1}{2}} e^{-\frac{\vp}{h}}\partial_{\nu}(-h^2\Delta)^{m-k-1}u \right\rVert_{L^2(\partial_{-}\Omega)},
\end{aligned}
\end{equation}

for all $u \in \Dc(\Lc(x,D))$. 
The lower order terms in the operator $\Lc(x,D)$ consists of anisotropic terms, whereas \eqref{BCE_0} consists of terms only involving powers of the Laplacian. Therefore, as mentioned before, we need semi-classical versions of the standard elliptic regularity estimates. We could not find a ready reference for such estimates and therefore we prove this in Appendix \ref{Appendix_elliptic_regu}.

For $u \in H^{2m}(\Omega)$ and for any $k\leq 2m$, we have
\begin{equation}\label{claim1}
	\left\| e^{-\frac{\vp}{h}}h^k D^k_{i_1\dots i_k} u\right\|_{L^2(\Omega)}
	\leq C\left\|e^{-\frac{\vp}{h}}u \right\|_{H^k_{\mathrm{scl}}(\Omega)}.
\end{equation}
This follows from the identity: For $u\in H^{1}(\O)$,
\[	e^{-\frac{\vp}{h}}hD_ju 
=	\left[e^{-\frac{\vp}{h}} hD_j e^{\frac{\vp}{h}}\right] \left(e^{-\frac{\vp}{h}}u\right)
=	\left[\left(D_j\vp\right) + hD_j\right] \left(e^{-\frac{\vp}{h}}u\right). 
\]
Iterating this $k$ times, we obtain for $u\in H^{k}(\O)$:
\[	e^{-\frac{\vp}{h}}h^k D^k_{i_1\dots i_k} u
= \left(\prod_{j=1}^{k} \left[\left(D_{i_j}\vp\right) + hD_{i_j}\right]\right)\left(e^{-\frac{\vp}{h}}u\right),\mbox{ for } u \in H^k(\Omega).
\]
Thus for all $u \in H^{2m}(\Omega)$ and for $k\leq 2m$, we have
\[	\left\| e^{-\frac{\vp}{h}}h^k D^k_{i_1\dots i_k} u \right\|_{L^2(\Omega)} 
\leq \left\| \left(\prod_{j=1}^{k} \left[\left(D_{i_j}\vp\right) + hD_{i_j}\right]\right) \left(e^{-\frac{\vp}{h}} u\right) \right\|_{L^2(\Omega)}
\leq C \left\| e^{-\frac{\vp}{h}}u \right\|_{H^k_{\mathrm{scl}}(\Omega)}.
\]

Next we show that for any $u \in H^{2m}(\Omega)$ with $(-\Delta)^ju|_{\partial\Omega}  = 0$ for $j=0,1,\dots,m-1$,
\begin{equation}\label{claim}
\left\| e^{-\frac{\vp}{h}}u\right\|_{H^k_{\mathrm{scl}}(\Omega)}
\leq C\sum_{j=0}^{[\frac{k}{2}]}\left\|e^{-\frac{\vp}{h}}(-h^2\Delta)^j u \right\|_{H^1_{\mathrm{scl}}(\Omega)},\quad \mbox{for }k=0,1,\dots,m-1,
\end{equation}
 for some constant $C>1$. Here $[\cdot]$ is the greatest integer function. 

We prove this inequality by induction. 
We repeatedly use the following semi-classical elliptic regularity result (proved in Appendix \ref{Appendix_elliptic_regu}): 
\begin{equation}\label{Ellip_regu}
\| u \|_{H^{j+2}_{\mathrm{scl}}(\Omega)} 
\leq C\left(\|(-h^2\Delta)u\|_{H^{j}_{\mathrm{scl}}(\Omega)} + \|u\|_{L^2(\Omega)}\right),\mbox{ for } u \in H^{j+2}(\Omega) \cap H^1_0(\O) \mbox{ and  } 0\leq j\leq 2m-2.
\end{equation}
Going back to the proof of \eqref{claim}, observe that for $k=0,1$, this estimate trivially holds.

Let us assume that the claim is true for $j=0,1,\dots, k$ for some $k < m-1$. That is, for any $0\leq j \leq k$ we have
\[  \left\| e^{-\frac{\vp}{h}}u\right\|_{H^j_{\mathrm{scl}}(\Omega)}
\leq C\sum_{l=0}^{[\frac{j}{2}]}\left\|e^{-\frac{\vp}{h}}(-h^2\Delta)^l u \right\|_{H^1_{\mathrm{scl}}(\Omega)},
\]
for $u \in H^{2m}(\Omega)$ with $\lb -\Delta\rb^{l}  u|_{\PD \O} = 0$ for $l=0,1,\dots,[j/2]$.
For $k+1$, we have, using \eqref{Ellip_regu},
\[\begin{aligned}
\left\| e^{-\frac{\vp}{h}} u\right\|_{H^{k+1}_{\mathrm{scl}}(\Omega)}
\leq& C\left\| (-h^2\Delta) e^{-\frac{\vp}{h}} u\right\|_{H^{k-1}_{\mathrm{scl}}(\Omega)} + C\left\|e^{-\frac{\vp}{h}}u\right\|_{L^2(\Omega)} \mbox{ since }e^{-\frac{\vp}{h}} u|_{\partial\Omega} = 0,\\
\leq& C\lb \left\| e^{-\frac{\vp}{h}} \left[(-h^2\Delta) + 2\nabla\vp\cdot h\nabla + \left(h\Delta\vp - |\nabla \vp|^2\right)\right] u\right\|_{H^{k-1}_{\mathrm{scl}}(\Omega)}+ \left\|e^{-\frac{\vp}{h}}u\right\|_{L^2(\Omega)}\rb \\
\leq& C\left(\left\| e^{-\frac{\vp}{h}} (-h^2\Delta) u\right\|_{H^{k-1}_{\mathrm{scl}}(\Omega)} 
+ \left\| e^{-\frac{\vp}{h}} (hD) u\right\|_{H^{k-1}_{\mathrm{scl}}(\Omega)}
+ \left\| e^{-\frac{\vp}{h}} u\right\|_{H^{k-1}_{\mathrm{scl}}(\Omega)} \right)\\
\leq& C\left(\left\| e^{-\frac{\vp}{h}} (-h^2\Delta) u\right\|_{H^{k-1}_{\mathrm{scl}}(\Omega)} 
+ \left\| \left(D\vp + hD\right)\left(e^{-\frac{\vp}{h}} u\right)\right\|_{H^{k-1}_{\mathrm{scl}}(\Omega)}
+ \left\| e^{-\frac{\vp}{h}} u\right\|_{H^{k-1}_{\mathrm{scl}}(\Omega)} \right)\\
\leq& C\left(\left\| e^{-\frac{\vp}{h}} (-h^2\Delta) u\right\|_{H^{k-1}_{\mathrm{scl}}(\Omega)} 
+ \left\| e^{-\frac{\vp}{h}} u\right\|_{H^{k-1}_{\mathrm{scl}}(\Omega)} 
+ \left\| e^{-\frac{\vp}{h}} u\right\|_{H^{k}_{\mathrm{scl}}(\Omega)}\right). 
\end{aligned}
\]
Using the induction assumption along with the fact that $e^{-\frac{\vp}{h}}(-h^2\Delta)^lu |_{\partial\Omega} = 0$, for $l=0,1,\dots, [\frac{k+1}{2}]$ we get
\[\begin{aligned}
\left\| e^{-\frac{\vp}{h}} u\right\|_{H^{k+1}_{\mathrm{scl}}(\Omega)}
\leq& C\left( \sum_{j=0}^{[\frac{k-1}{2}]}\left\| e^{-\frac{\vp}{h}} (-h^2\Delta)^j(-h^2\Delta)u\right\|_{H^1_{\mathrm{scl}}(\Omega)}
+ \sum_{j=0}^{[\frac{k-1}{2}]}\left\| e^{-\frac{\vp}{h}} (-h^2\Delta)^ju\right\|_{H^1_{\mathrm{scl}}(\Omega)}\right.\\
&\qquad\left. + \sum_{j=0}^{[\frac{k+1}{2}]}\left\| e^{-\frac{\vp}{h}} (-h^2\Delta)^ju\right\|_{H^1_{\mathrm{scl}}(\Omega)}\right)\\
\leq& C\sum_{j=0}^{[\frac{k+1}{2}]}\left\| e^{-\frac{\vp}{h}} (-h^2\Delta)^ju\right\|_{H^1_{\mathrm{scl}}(\Omega)},
\end{aligned}
\]
for $u \in H^{2m}(\Omega)$ with $(-h^2\Delta)^j u|_{\partial\Omega} = 0$ for $0\leq j \leq [\frac{k+1}{2}]$.
The proof of \eqref{claim} is now complete.

Combining \eqref{claim1} and \eqref{claim}, we obtain, for $u \in H^{2m}(\Omega)$ with $(-\Delta)^ju|_{\partial\Omega}  = 0$ for $j=0,1,\dots,m-1$,
\Beq\label{Claim3}
	\left\| e^{-\frac{\vp}{h}}h^k D^k_{i_1\dots i_k} u\right\|_{L^2(\Omega)}
	\leq C\sum_{j=0}^{[\frac{k}{2}]}\left\|e^{-\frac{\vp}{h}}(-h^2\Delta)^j u \right\|_{H^1_{\mathrm{scl}}(\Omega)},\quad \mbox{for }k=0,1,\dots,m-1.
\Eeq

Using \eqref{Claim3}, for $h>0$ small enough we have,
\begin{equation}\label{BCE_1}
\begin{aligned}
\left\| e^{-\frac{\vp}{h}} h^{2m}\overline{A}^{\:m-2}_{i_1 \dots i_{m-2}} D^{m-2}_{i_1\dots i_{m-2}}\circ\Delta u\right\|_{L^2(\Omega)}&\leq Ch^m\left\| e^{-\frac{\vp}{h}} h^{m-2}D^{m-2}_{i_1\dots i_{m-2}}(-h^2\Delta u)\right\|_{L^2(\Omega)}\\
&\leq Ch^m\sum_{j=0}^{[\frac{m-2}{2}]}\left\| e^{-\frac{\vp}{h}} (-h^2\Delta)^j(-h^2\Delta u)\right\|_{H^1_{\mathrm{scl}}(\Omega)}\\
&\leq Ch^m\sum_{j=1}^{[\frac{m}{2}]}\left\| e^{-\frac{\vp}{h}} (-h^2\Delta)^ju\right\|_{H^1_{\mathrm{scl}}(\Omega)}\\
&\leq Ch\sum_{j=0}^{[\frac{m}{2}]}h^{m-j}\left\| e^{-\frac{\vp}{h}} (-h^2\Delta)^ju\right\|_{H^1_{\mathrm{scl}}(\Omega)}.
\end{aligned}
\end{equation}
Similarly the other perturbations for $k=1,\dots,m-1$ can be estimated as
\begin{equation}\label{BCE_2}
\begin{aligned}
&\left\| e^{-\frac{\vp}{h}} h^{2m}A^{k}_{i_1 \dots i_{k}}D^{k}_{i_1\dots i_{k}}u\right\|_{L^2(\Omega)}
\leq Ch^{2m-k}\left\| e^{-\frac{\vp}{h}} h^{k}D^{k}_{i_1\dots i_{k}}u\right\|_{L^2(\Omega)}\\
&\leq Ch^{2m-k}\sum_{j=1}^{[\frac{k}{2}]}\left\| e^{-\frac{\vp}{h}} (-h^2\Delta)^ju\right\|_{H^1_{\mathrm{scl}}(\Omega)}
\leq Ch^{m-k+1}\sum_{j=0}^{[\frac{m}{2}]}h^{m-j}\left\| e^{-\frac{\vp}{h}} (-h^2\Delta)^ju\right\|_{H^1_{\mathrm{scl}}(\Omega)}.
\end{aligned}
\end{equation}
Therefore by the triangle inequality we obtain
\begin{equation}\label{BCE_3}
\begin{aligned}
\left\|e^{-\frac{\vp}{h}} h^{2m}\Lc(x,D)u\right\|_{L^2(\Omega)}
\geq& \left\|e^{-\frac{\vp}{h}} (-h^2\Delta)^m u\right\|_{L^2(\Omega)}
- \left\| e^{-\frac{\vp}{h}} h^{2m}\overline{A}^{\:m-2}_{i_1 \dots i_{m-1}}D^{m-2}_{i_1\dots i_{m-2}}\circ\Delta u\right\|_{L^2(\Omega)}\\
&- \left\| e^{-\frac{\vp}{h}} h^{2m}A^{k}_{i_1 \dots i_{k}}D^{k}_{i_1\dots i_{k}}u\right\|_{L^2(\Omega)}
- \left\| e^{-\frac{\vp}{h}} h^{2m}qu\right\|_{L^2(\Omega)}\\
\geq& \left\|e^{-\frac{\vp}{h}} (-h^2\Delta)^m u\right\|_{L^2(\Omega)}\\
&- C\sum_{k=0}^{m} h^{m-k+1}\sum_{j=0}^{[\frac{m}{2}]}h^{m-j}\left\| e^{-\frac{\vp}{h}} (-h^2\Delta)^ju\right\|_{H^1_{\mathrm{scl}}(\Omega)}\\
\geq& \left\|e^{-\frac{\vp}{h}} (-h^2\Delta)^m u\right\|_{L^2(\Omega)}
-Ch\sum_{j=0}^{[\frac{m}{2}]}h^{m-j}\left\| e^{-\frac{\vp}{h}} (-h^2\Delta)^ju\right\|_{H^1_{\mathrm{scl}}(\Omega)}.
\end{aligned}
\end{equation} 
Now combining \eqref{BCE_3} with \eqref{BCE_0},  we get the desired estimate.
\end{proof}

\section{Determination of the coefficients} \label{determination}
In this section we prove Theorem \ref{mainresult_1} using the CGO solutions established in Section \ref{Sec_Prelim} along with the boundary Carleman estimate in Proposition \ref{bdy_Car_Est}. We start by deriving certain integral identities for the difference of the operators $\Lc(x,D)$ and $\wt{\Lc}(x,D)$.
\subsection{Integral identities}
	We recall that
	\[\Lc(x,D) = 
		(-\Delta)^m + \sum\limits_{i_1,\cdots, i_{m-2}=1}^{n} \overline{A}^{\:m-2}_{i_{1}\cdots i_{m-2}}(x) D^{m-2}_{i_{1}\cdots i_{m-2}}\circ \lb -\Delta\rb+ \sum\limits_{j=1}^{m-1} \sum\limits_{i_1,\cdots, i_{j}=1}^{n}A_{i_{1}\cdots i_{m-j}}^{m-j}(x) D^{m-j}_{i_{1}\cdots i_{m-j}}+ q(x),
	\]
where $\overline{A}^{\:m-2}$ is a smooth symmetric $m-2$ tensor field, $A^{m-j}$ are smooth symmetric $(m-j)$ tensor fields in $\Omega$ for $1\leq j\leq m-1$ and $q \in L^{\infty}(\Omega)$. Applying integration by parts, we have the following integral identity 
\begin{equation}\label{integralidentity}
\begin{aligned}
\int_{\Omega} &\left(\Lc(x,D)u\right)\overline{v}\, \D x - \int_{\Omega} u\, \overline{\Lc^{*}(x,D)v}\,\D x\\
= &\sum_{k=0}^{m-1}\int_{\partial\Omega} \left((-\Delta)^{(m-k-1)}u\right) \overline{\left(\partial_{\nu} (-\Delta)^{k}v\right)}\,\D S
-\sum_{k=0}^{m-1}\int_{\partial\Omega} \left(\partial_{\nu} (-\Delta)^{(m-k-1)}u\right) \overline{\left((-\Delta)^{k}v\right)}\,\D S\\
&-\I \sum_{k=1}^{m-2}\sum_{i_1,\cdots,i_{m-2}=1}^{n} \int_{\partial\Omega} (-1)^k \nu_{i_{m-1-k}}\left(D^{m-2-k}_{i_1 \dots i_{m-2-k}} \circ \Delta u\right) D^{k-1}_{i_{m-k} \dots i_{m-2}}\left(\overline{A}^{\:m-2}_{i_1 \dots i_{m-2}} \overline{v}\right) \,\D S\\
&+ \sum_{i_1,\cdots,i_{m-2}=1}^{n} \int_{\partial\Omega} (-1)^{m-1} \left(\partial_{\nu}u\right)\, D^{m-2}_{i_{1} \dots i_{m-2}}\left(\overline{A}^{\:m-2}_{i_1 \dots i_{m-2}} \overline{v}\right) \,\D S\\
&+ \sum_{i_1,\cdots,i_{m-2}=1}^{n} \int_{\partial\Omega} (-1)^{m} u \, \partial_{\nu}\left[D^{m-2}_{i_{1} \dots i_{m-2}}\left(\overline{A}^{\:m-2}_{i_1 \dots i_{m-2}} \overline{v}\right)\right] \,\D S\\
&+\I \sum_{j=1}^{m-1}\sum_{k=1}^{j}\sum_{i_1,\cdots,i_{j}=1}^{n} \int_{\partial\Omega} (-1)^k \nu_{i_{j-k+1}}\left(D^{j-k}_{i_1 \dots i_{j-k}}u\right) D^{k-1}_{i_{j-k+2} \dots i_{j}}\left(A^{j}_{i_1 \dots i_{j}} \overline{v}\right) \,\D S,
\end{aligned}
\end{equation}
where $u,v\in H^{2m}(\Omega)$ and $\D S$ is the surface measure on $\partial\Omega$.
Let $u,\widetilde{u}\in H^{2m}(\Omega)$ solve
\begin{equation}\label{eq_1}
	\begin{aligned}
		\Lc(x,D)u &=0\quad\mbox{in }\Omega\qquad \quad
		\mbox{and}\quad
	\widetilde{\Lc}(x,D)\widetilde{u} =0 \quad\mbox{in }\Omega,\\
			\mbox{with }\quad (-\Delta)^l u|_{\partial\Omega} &= (-\Delta)^l \widetilde{u}|_{\partial\Omega},\quad \mbox{for }l=0,1,\ldots,(m-1),
		\end{aligned}
	\end{equation}
where $\widetilde{\Lc}(x,D)$ is defined in Theorem \ref{mainresult_1} having perturbation coefficients to be $\widetilde{A}^j$ and $\widetilde{q}$ for $j=1,\cdots,m$.
From the assumption in Theorem \ref{mainresult_1} on the boundary data we get
	\begin{equation}\label{eq_2}
		\partial_\nu(-\Delta )^{m-l}u|_{\Gamma_F} = \partial_\nu(-\Delta )^{m-l}\widetilde{u}|_{\Gamma_F}
		\quad\mbox{for} \quad l=1,\ldots,m.
	\end{equation}
	Therefore from \eqref{integralidentity}, \eqref{eq_1} and \eqref{eq_2} we get
\begin{equation}\label{integralidentity_1}
\begin{aligned}
\int_{\Omega} \left(\Lc(x,D)(u-\widetilde{u})\right)\overline{v} \D x - \int_{\Omega} (u-\widetilde{u})\overline{\Lc^{*}(x,D)v}\D x
=\Bc, \quad \mbox{for any }v\mbox{ in }H^{2m}(\Omega),
\end{aligned}
\end{equation}
where $\Bc$ is given by 
\begin{equation}
    \begin{aligned}\label{boundary_term}
   \Bc=& 
-\sum_{k=0}^{m-1}\int_{\partial\Omega\setminus \Gamma_{F}} \left(\partial_{\nu} (-\Delta)^{m-k-1}(u-\wt{u})\right) \overline{\left((-\Delta)^{k}v\right)}\,\D S\\
&-\I \sum_{k=1}^{m-2}\sum_{i_1,\cdots,i_{m-2}=1}^{n} \int_{\partial\Omega} (-1)^k \nu_{i_{m-1-k}}\left(D^{m-2-k}_{i_1 \dots i_{m-2-k}} \circ \Delta (u-\wt{u})\right) D^{k-1}_{i_{m-k} \dots i_{m-2}}\left(\overline{A}^{\:m-2}_{i_1 \dots i_{m-2}} \overline{v}\right) \,\D S\\
&+ \sum_{i_1,\cdots,i_{m-2}=1}^{n} \int_{\partial\Omega} (-1)^{m-1} \left(\partial_{\nu}(u-\wt{u})\right)\, D^{m-2}_{i_{1} \dots i_{m-2}}\left(\overline{A}^{\:m-2}_{i_1 \dots i_{m-2}} \overline{v}\right) \,\D S\\
&+\I \sum_{j=1}^{m-1}\sum_{k=1}^{j}\sum_{i_1,\cdots,i_{j}=1}^{n} \int_{\partial\Omega} (-1)^k \nu_{i_{j-k+1}}\left(D^{j-k}_{i_1 \dots i_{j-k}}(u-\wt{u})\right) D^{k-1}_{i_{j-k+2} \dots i_{j}}\left(A^{j}_{i_1 \dots i_{j}} \overline{v}\right) \,\D S.
\end{aligned}
\end{equation}

Let us choose $v\in H^{2m}(\Omega)$ as in \eqref{cgos} satisfying $\Lc^{*}(x,D)v = 0 $ in $\Omega$, and from \eqref{integralidentity_1} we get
\[  \Bc 
= \int_{\Omega} \left(\Lc(x,D)(u-\widetilde{u})\right)\overline{v} \D x 
= \int_{\Omega} \overline{v} \left(\widetilde{\Lc}(x,D)- \Lc(x,D)\right)\widetilde{u} \, \D x,
\]
where the last equality follows from
\[	\Lc(x,D)(u-\widetilde{u}) = \Lc(x,D)u - \Lc(x,D)\widetilde{u} = \widetilde{\Lc}(x,D)\widetilde{u} - \Lc(x,D)\widetilde{u} = \left(\widetilde{\Lc}(x,D)- \Lc(x,D)\right)\widetilde{u}.
\]
Now note that
\begin{equation}\label{differenceEven}
\begin{aligned}
\left(\widetilde{\Lc}(x,D)- \Lc(x,D)\right)\widetilde{u}
=&\sum_{i_1\dots i_{m-2}=1}^{n}(\widetilde{\overline{A}}^{\:m-2}_{i_1\dots i_{m-2}}-\overline{A}^{\:m-2}_{i_1\dots i_{m-2}})D^{m-2}_{i_1\dots i_{m-2}}\circ(-\Delta) \widetilde{u}\\
&+\sum_{k=1}^{m-1}\sum_{i_1\dots i_k=1}^{n}(\widetilde{A}^k_{i_1\dots i_k}-{A^k_{i_1\dots i_k}})D^k_{i_1\dots i_k}\widetilde{u}+(\widetilde{q}-q)\widetilde{u}.
\end{aligned}
\end{equation}
We recall our choices of $\widetilde{u},v \in H^{2m}(\Omega)$ using Proposition \ref{Prop_solvibility}:  
\begin{equation*}
\begin{aligned}
\widetilde{u}(x;h) =& e^{\frac{\vp + \I \psi}{h}}\left(a_0(x) + ha_1(x) + \dots + h^{m-1}a_{m-1}(x) + r(x;h)\right)\\
v(x;h) =& e^{\frac{-\vp + \I \psi}{h}}\left(b_0(x) + hb_1(x) + \dots + h^{m-1}b_{m-1}(x) + \wt{r}(x;h)\right),
\end{aligned}
\end{equation*}
solving $\widetilde{\Lc}(x,D)\widetilde{u}(x;h) = 0 = {\Lc}^*(x,D)v(x;h)$ in $\Omega$ for $0<h\ll 1$, with the bounds  $$\|r(x;h)\|_{H^{2m}_{\mathrm{scl}}(\Omega)},\, \|\wt{r}(x;h)\|_{H^{2m}_{\mathrm{scl}}(\Omega)} \leq ch^{m}.$$
Here $a_k, b_k$ are solutions of the transport equations  \eqref{CGO_2.0} and \eqref{CGO_2.j} for corresponding coefficients. Let us denote
\[ \begin{aligned}
\aK(x;h) =& a_0(x) + ha_1(x) + \dots + h^{m-1}a_{m-1}(x) + r(x;h)\\
\bK(x;h) =& b_0(x) + hb_1(x) + \dots + h^{m-1}b_{m-1}(x) + \wt{r}(x;h).
\end{aligned} \]
Using the CGO solutions $\widetilde{u}$ and $v$ we see that \eqref{differenceEven} implies
\begin{equation}\label{Int_id}
\begin{aligned}
\Bc =&\int_{\Omega} \left(\widetilde{\Lc}(x,D)- \Lc(x,D)\right)\widetilde{u}(x)\, \overline{v(x)}\, \D x \\
=&\sum_{i_1\dots i_{m-2}=1}^{n}\int_{\Omega}\left(\widetilde{\overline{A}}^{\:m-2}-\overline{A}^{\:m-2}\right)_{i_1\dots i_{m-2}} \left[H^{m-2}_{i_1\dots i_{m-2}}\left(-\frac{1}{h}T - \Delta\right) \aK(x;h)\right] \overline{\bK(x;h)}\,\D x \\
&+\sum_{k=1}^{m-1}\sum_{i_1\dots i_k=1}^{n} \int_{\Omega} (\widetilde{A^k}-A^k)_{i_1\dots i_k}\left[H^{k}_{i_1\dots i_k}\aK(x;h)\right] \, \overline{\bK(x;h)}\, \D x\\
&+\int_{\Omega}(\widetilde{q}-q)\aK(x;h) \, \overline{\bK(x;h)}\,\D x,
\end{aligned}
\end{equation}
where $T$ and $H^j$ are defined in \eqref{term_H}. 

In the next subsection we see that multiplying adequate powers of the small parameter $h$ to the boundary term $\Bc$ we can make the product small. This will in turn imply that multiplying sufficient powers of $h$ to the right hand side of \eqref{Int_id} we get integral identities relating the differences of the unknown coefficients i.e. $\left(\widetilde{A}^k-A^k\right)$ and $\left(\widetilde{q}-q\right)$ over $\Omega$. In Section \ref{Section_coeff_deter} we deal with the right hand side of \eqref{Int_id} in order to prove Theorem \ref{mainresult_1}. 

\subsection{Boundary estimates} In this section we will estimate the CGO solutions on the unknown part of the boundary $\partial\Omega\setminus\Gamma_F$ by the data from the known part of the boundary $\Gamma_F$.
Using these CGO forms of $\widetilde{u}, v$ and  the boundary Carleman estimate \eqref{Car_0} we prove the following results.
\begin{lemma}\label{Prop_boundary_est_1}
Let $u, \widetilde{u} \in H^{2m}(\Omega)$ solve \eqref{eq_1} with $\widetilde{u}$ as in the first equation of \eqref{cgos} and $v \in H^{2m}(\Omega)$ be as in the second equation of \eqref{cgos}. For $0<h\ll 1$ and $\Gamma_F \subset \partial\Omega$ be a neighborhood of $\partial_{-}\Omega$, we have the following estimate
\begin{equation}\label{boundary_est_1.1}
\lim\limits_{h\to 0} \int_{\partial\Omega\setminus\Gamma_F} h^{m-1}\left[\partial_{\nu} (-\Delta)^{{m-k-1}}(u-\widetilde{u})\right] \Delta^{k}\overline{v}\,\D S = 0 \quad \mbox{for} \quad 0\le k \le m-1.
\end{equation}
\end{lemma}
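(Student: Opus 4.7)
Set $w=u-\widetilde u$. From \eqref{eq_1}, the Dirichlet-type traces $(-\Delta)^j w$ vanish on $\partial\Omega$ for $j=0,\ldots,m-1$, so $w\in\mathcal{D}(\mathcal{L}(x,D))$; from the partial data assumption \eqref{eq_2}, the Neumann-type traces $\partial_\nu(-\Delta)^j w$ vanish on $\Gamma_F$ for $j=0,\ldots,m-1$. Moreover, $\mathcal{L}w=\mathcal{L}u-\mathcal{L}\widetilde u=-\mathcal{L}\widetilde u=(\widetilde{\mathcal{L}}-\mathcal{L})\widetilde u$ using $\widetilde{\mathcal L}\widetilde u=0$. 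Since $\Gamma_F$ is an open neighborhood of the closed set $\partial_-\Omega=\{\nu_1\le 0\}$ and $\partial\Omega\setminus\Gamma_F$ is compact, we have $\nu_1\ge c>0$ on $\partial\Omega\setminus\Gamma_F$.

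The plan is to apply Cauchy--Schwarz using $\nu_1^{1/2}\cdot\nu_1^{-1/2}$ and insert the Carleman weight $e^{-\varphi/h}e^{\varphi/h}$:
\begin{equation*}
\left|\int_{\partial\Omega\setminus\Gamma_F} h^{m-1}[\partial_\nu(-\Delta)^{m-k-1}w]\,\Delta^k\overline{v}\,\D S\right|
\le C\,h^{m-1}\,\bigl\|\nu_1^{1/2}e^{-\varphi/h}\partial_\nu(-\Delta)^{m-k-1}w\bigr\|_{L^2(\partial_+\Omega)}\,\bigl\|e^{\varphi/h}\Delta^k v\bigr\|_{L^2(\partial\Omega\setminus\Gamma_F)},
\end{equation*}
and then bound the two factors using Proposition \ref{bdy_Car_Est} and the structure of the CGO $v$, respectively.

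For the first factor, I would apply the boundary Carleman estimate to $w$. Since $\partial_\nu(-\Delta)^{m-k-1}w$ vanishes on $\Gamma_F\supset\partial_-\Omega$, the boundary term on $\partial_-\Omega$ on the right of \eqref{bdy_carlemanestimate} drops out, and after writing $(-h^2\Delta)^{m-k-1}=h^{2(m-k-1)}(-\Delta)^{m-k-1}$ and absorbing the powers of $h$ on the left, I obtain
\begin{equation*}
\bigl\|\nu_1^{1/2}e^{-\varphi/h}\partial_\nu(-\Delta)^{m-k-1}w\bigr\|_{L^2(\partial_+\Omega)}
\le C\,h^{k+\frac12}\,\bigl\|e^{-\varphi/h}\mathcal{L}w\bigr\|_{L^2(\Omega)}.
\end{equation*}
Here the key observation, and arguably the main subtlety, is the size of $\|e^{-\varphi/h}\mathcal{L}w\|_{L^2(\Omega)}$. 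Writing $\widetilde u=e^{(\varphi+i\psi)/h}\aK$, we have $e^{-\varphi/h}\mathcal Lw=e^{i\psi/h}\bigl(e^{-(\varphi+i\psi)/h}(\widetilde{\mathcal{L}}-\mathcal{L})e^{(\varphi+i\psi)/h}\bigr)\aK$. Although this is naively an operator of order $m$ with coefficients of size $h^{-m}$, the $h^{-m}$ contribution comes exclusively from the leading $(-\Delta)^m$ terms which are common to $\mathcal L$ and $\widetilde{\mathcal L}$, and therefore cancels. (Equivalently, the surviving $h^{-m}$ piece involves $\sum(\widetilde{A}^m-A^m)_{i_1\cdots i_m}\prod_k D_{i_k}(\varphi+i\psi)$, which using $A^m=i_\delta \overline A^{m-2}$ equals a contraction containing $|\nabla(\varphi+i\psi)|^2=0$ by the LCW condition.) Consequently the conjugated difference operator has top order $h^{-(m-1)}$, giving $\|e^{-\varphi/h}\mathcal Lw\|_{L^2(\Omega)}\le C\,h^{1-m}$.

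For the second factor, I would use the LCW identity $|\nabla(-\varphi+i\psi)|^2=0$ to see that
\begin{equation*}
e^{-(-\varphi+i\psi)/h}\Delta\, e^{(-\varphi+i\psi)/h}=\Delta+\tfrac{1}{h}\bigl(2\nabla(-\varphi+i\psi)\cdot\nabla+\Delta(-\varphi+i\psi)\bigr),
\end{equation*}
so the conjugated operator $e^{-(-\varphi+i\psi)/h}\Delta^k e^{(-\varphi+i\psi)/h}$ has coefficients of size at most $h^{-k}$, not $h^{-2k}$. Applied to the amplitude $\bK$ (which is bounded in $C^0$ plus a remainder $\widetilde r$ with $\|\widetilde r\|_{H^{2m}_{\mathrm{scl}}}=O(h^m)$, controlled via the semiclassical trace theorem), this yields $\|e^{\varphi/h}\Delta^k v\|_{L^2(\partial\Omega)}\le C\,h^{-k}$.

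Combining the three bounds, the integrand in \eqref{boundary_est_1.1} is controlled by $C\,h^{m-1}\cdot h^{k+\frac32-m}\cdot h^{-k}=C\,h^{1/2}$, which tends to $0$. The delicate part is tracking exactly which cancellations must be exploited (both in the difference operator $\widetilde{\mathcal L}-\mathcal L$ and in the Laplacian powers applied to the CGO $v$) to push the exponent of $h$ past $0$ uniformly in $k$.
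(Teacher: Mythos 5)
Your proposal is correct and follows essentially the same route as the paper's proof: compactness to get $\partial_\nu\varphi\geq\delta_0>0$ on $\partial\Omega\setminus\Gamma_F$, the boundary Carleman estimate applied to $w=u-\widetilde u$ (with the $\partial_-\Omega$ term dropping since the Neumann traces vanish on $\Gamma_F$) to get the factor $h^{k+1/2}\|e^{-\varphi/h}\mathcal{L}w\|_{L^2(\Omega)}$, the bound $\|e^{-\varphi/h}\mathcal{L}w\|_{L^2(\Omega)}=O(h^{1-m})$ coming from the cancellation of the principal part in $\widetilde{\mathcal L}-\mathcal L$ together with $A^m=i_\delta\overline A^{\,m-2}$ and $\nabla(\varphi+i\psi)\cdot\nabla(\varphi+i\psi)=0$, and the $O(h^{-k})$ bound on the conjugated $\Delta^k$ acting on the amplitude of $v$, yielding $O(h^{1/2})$ overall. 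The only cosmetic difference is that you make explicit the cancellation mechanism that the paper records as the asserted estimate \eqref{eq_4}.
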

	\begin{proof}
	Observe that $\Gamma_F \subset \partial\Omega$ is an open neighborhood of $\partial_{-}\Omega = \{\PD_{\nu}\vp(x) \leq 0\}$. Using compactness of $\partial\Omega$, we get $\delta_0>0$ small enough, such that
	\begin{equation*}
		\partial\Omega \setminus \Gamma_F \ssubset \{ x \in \partial\Omega : \PD_{\nu}\vp(x) >\delta_0 > 0 \} \ssubset \partial_{+}\Omega.
	\end{equation*}
	From Proposition \ref{bdy_Car_Est} for each $0\leq k\leq m-1$, we get for all $w \in \mathcal{D}(\mathcal{L}(x,D))$,
	\begin{equation*}
		\begin{aligned}
			h^{2m}\left\lVert e^{-\frac{\varphi}{h}}\mathcal{L}(x,D)w\right\rVert_{L^2(\Omega)}
			+&\sum_{k=0}^{m-1} h^{\frac{3}{2}+k}\left\lVert |\partial_{\nu}\varphi|^{\frac{1}{2}} e^{-\frac{\varphi}{h}}\partial_{\nu}(-h^2\Delta)^{m-k-1}w \right\rVert_{L^2(\partial_{-}\Omega)}\\
			&\geq h^{\frac{3}{2}+k}\left\lVert |\partial_{\nu}\varphi|^{\frac{1}{2}} e^{-\frac{\varphi}{h}}\partial_{\nu}(-h^2\Delta)^{m-k-1}w \right\rVert_{L^2(\partial_{+}\Omega)}\\
			&\geq C_{\delta_0} h^{\frac{3}{2}+k}\left\lVert e^{-\frac{\varphi}{h}}\partial_{\nu}(-h^2\Delta)^{m-k-1}w \right\rVert_{L^2(\partial\Omega\setminus\Gamma_F)}.		\end{aligned}
	\end{equation*}
	Since $u-\wt{u} \in \mathcal{D}(\mathcal{L}(x,D))$, substituting $w=u-\wt{u}$, we obtain
	\begin{equation*}
		h^{2m}\left\lVert e^{-\frac{\varphi}{h}}\mathcal{L}(x,D)(u-\widetilde{u})\right\rVert_{L^2(\Omega)}
		\geq C_{\delta_0} h^{\frac{3}{2}+k}\left\lVert e^{-\frac{\varphi}{h}}\partial_{\nu}(-h^2\Delta)^{m-k-1}(u-\widetilde{u}) \right\rVert_{L^2(\partial\Omega\setminus\Gamma_F)}.
	\end{equation*}
	This gives, for all $0\leq k\leq m-1$,
	\begin{equation}\label{bdy_est_1}
		\left\lVert e^{-\frac{\varphi}{h}}\partial_{\nu}(-\Delta)^{m-k-1}(u-\widetilde{u}) \right\rVert_{L^2(\partial\Omega\setminus\Gamma_F)} \le \tilde{C}_{\delta_0} h ^{k+\frac{1}{2}}\left\lVert e^{-\frac{\varphi}{h}}\mathcal{L}(x,D)(u-\widetilde{u})\right\rVert_{L^2(\Omega)}.
	\end{equation}
Next we have
	\begin{equation}\label{eq_3}
		\begin{aligned}
			&\left| \int_{\partial\Omega\setminus\Gamma_F} \left[\partial_{\nu} (-\Delta)^{{m-k-1}}(u-\widetilde{u})\right] (-\Delta)^{k}\overline{v}\,\D S \right|\\
			&= \left| \int_{\partial\Omega\setminus\Gamma_F} e^{-\frac{\vp}{h}}\left[\partial_{\nu} (-\Delta)^{{m-k-1}}(u-\widetilde{u})\right] e^{\frac{\vp}{h}}(-\Delta)^{k}\overline{e^{\frac{-\vp+i\psi}{h}}\mathfrak{b}}\,\D S \right|\\
			&\leq C 
			\left\lVert e^{-\frac{\vp}{h}} \partial_{\nu} (-\Delta)^{{m-k-1}}(u-\widetilde{u})\right\rVert_{L^2(\PD_{+}\Omega)}
    			\left\lVert \left(-\Delta-\frac{1}{h}T\right)^k\overline{\mathfrak{b}}\right\rVert_{L^2(\partial\Omega)}.
		\end{aligned}
	\end{equation}
	Observe that $ \lVert\widetilde{r}\rVert_{H^{2m}_{\mathrm{scl}}(\Omega)} = \mathcal{O}(h^m) $. Thus using a similar argument as in \cite[Section 3]{Ghosh-Krishnan}, we have 
	\[\left\lVert\left(-\Delta-\frac{1}{h}T\right)^k\overline{\mathfrak{b}}\right\rVert_{L^2(\partial\Omega)} = \mathcal{O}\left(\frac{1}{h^k}\right). 
	\]
	This along with \eqref{bdy_est_1} and \eqref{eq_3} implies
	
	\begin{equation}\label{bdy_est_2}
		\begin{aligned}
			&\left| \int_{\partial\Omega\setminus\Gamma_F} \left[\partial_{\nu} (-\Delta)^{{m-k-1}}(u-\widetilde{u})\right] \Delta^{k}\overline{v}\,\D S \right|\\
			&\leq Ch^{\frac{1}{2}}\left\lVert e^{-\frac{\varphi}{h}}
			\mathcal{L}(x,D)(u-\widetilde{u})\right\rVert_{L^2(\Omega)}\\
			&\leq h^{\frac{1}{2}}\Bigg{(} \sum_{i_1\dots i_{m-2}=1}^{n} \,\left\| e^{-\frac{\varphi}{h}}\, (\widetilde{\overline{A}}^{\:m-2}_{i_1\dots i_{m-2}}-{\overline{A}^{\:m-2}_{i_1\dots i_{m-2}}})D^{m-2}_{i_1\dots i_{m-2}}\circ\Delta \widetilde{u}\right\|_{L^2(\Omega)}\\
            &\quad+\sum_{k=1}^{m-1}\sum_{i_1\dots i_k=1}^{n} \left\| e^{-\frac{\varphi}{h}} (\widetilde{A}^k_{i_1\dots i_k}-{A^k_{i_1\dots i_k}})D^k_{i_1\dots i_k} \widetilde{u}\right\|_{L^2(\Omega)} \\
            &\quad+\|e^{-\frac{\varphi}{h}}\,(\widetilde{q}-q)\widetilde{u}\|_{L^2(\Omega)}\Bigg{)}.
		\end{aligned}
	\end{equation}
	Note that
\begin{align}\label{eq_4}
& \left\lVert e^{-\frac{\vp}{h}} \left(\overline{A}^{\:m-2}_{i_1\dots  i_{m-2}} - \widetilde{\overline{A}}^{\:m-2}_{i_1\dots i_{m-2}}\right) D^{m-2}_{i_1\dots i_{m-2}}\circ \Delta\widetilde{u}\right\rVert_{L^2(\Omega)} 
\leq \mathcal{O}(h^{-(m-1)})\\
\intertext{and}
& \left\lVert e^{-\frac{\vp}{h}} \left(A^k_{i_1\dots   i_{k}} - \widetilde{A}^k_{i_1\dots i_k}\right) D^{k}_{i_1\dots i_k}\widetilde{u} \right\rVert_{L^2(\Omega)} 
\leq \mathcal{O}(h^{-k}), \qquad \mbox{for }k=1,\dots,m-1.
\end{align}
	Hence from \eqref{bdy_est_2}, for each $k$ with $0\leq k\leq m-1$, we finally obtain
	\begin{equation*}
		\left| \int_{\partial\Omega\setminus\Gamma_F} h^{m-1}\left[\partial_{\nu} (-\Delta)^{{m-k-1}}(u-\widetilde{u})\right] \Delta^k\overline{v}\,\D S \right|
		\leq \mathcal{O}(h^{\frac{1}{2}}) \to 0, 
		\qquad\mbox{as }h\to 0.
	\end{equation*}
	This proves \eqref{boundary_est_1.1}. 
	\end{proof}
\smallskip
\begin{lemma}\label{Prop_boundary_est_2}
	Let $u, \widetilde{u} \in H^{2m}(\Omega)$ solve \eqref{eq_1} with $\widetilde{u}$ as in the first equation of \eqref{cgos} and $v \in H^{2m}(\Omega)$ be as in the second equation of \eqref{cgos}. For $0<h\ll 1$ and $\Gamma_F \subset \partial\Omega$ be as before. For every indices $1\le i_1,i_2,\cdots,i_{m-2}\le n$, the following relations hold.
	\begin{align}
&	\lim\limits_{h\to 0} \int_{\partial\Omega}h^{m-1} \nu_{i_{m-1-k}}\left(D^{m-2-k}_{i_1 \dots i_{m-2-k}} \circ \Delta (u-\wt{u})\right) D^{k-1}_{i_{m-k} \dots i_{m-2}}\left(\overline{A}^{\:m-2}_{i_1 \dots i_{m-2}} \overline{v}\right) \,\D S =0,	\mbox{ for } 1\leq k\leq m-2 \label{eq_3.15},\\
&	\lim\limits_{h\to 0}  \int_{\partial\Omega}	h^{m-1} \left(\partial_{\nu}(u-\wt{u})\right)\, D^{m-2}_{i_{1} \dots i_{m-2}}\left(\overline{A}^{\:m-2}_{i_1 \dots i_{m-2}} \overline{v}\right) \,\D S=0 \label{eq_3.16},\\
&\lim\limits_{h\to 0}  \int_{\partial\Omega} h^{m-1} \nu_{i_{j-k+1}}\left(D^{j-k}_{i_1 \dots i_{j-k}}(u-\wt{u})\right) D^{k-1}_{i_{j-k+2} \dots i_{j}}\left(A^{j}_{i_1 \dots i_{j}} \overline{v}\right) \D S=0,\quad \forall 1\le j\le m-1, \quad  1\le k \le j.\label{eq_3.17}
\end{align}
	\end{lemma}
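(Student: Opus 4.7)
The plan is to proceed in the spirit of Lemma \ref{Prop_boundary_est_1}, combining the boundary Carleman estimate \eqref{bdy_est_1}, the bound \eqref{eq_4} for $\|e^{-\varphi/h}\mathcal{L}(x,D)(u-\widetilde{u})\|_{L^2(\Omega)}$, and the explicit CGO form of $v$. Set $w := u - \widetilde{u}$. The crucial extra ingredient beyond Lemma \ref{Prop_boundary_est_1} is that every integrand in \eqref{eq_3.15}--\eqref{eq_3.17} involves derivatives of $w$ of total order at most $m-1$ (on $w$ itself) or at most $m-2$ (on $\Delta w$); the Dirichlet-type conditions $(-\Delta)^l w|_{\partial\Omega} = 0$ for $l = 0, \ldots, m-1$ then give enough rigidity to reduce each such derivative to combinations of pure normal derivatives $\partial_\nu(-\Delta)^l w$, which are exactly the quantities controlled by Proposition \ref{bdy_Car_Est}.

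First, the partial data assumption \eqref{eq_2} together with the Dirichlet conditions forces every derivative of $w$ of order at most $2m-1$ to vanish on $\Gamma_F$, so each integrand is effectively supported in $\partial\Omega \setminus \Gamma_F \subset \partial_{+}\Omega$. Next I would decompose the Euclidean derivative $D$ near $\partial\Omega$ as $D = \nu \partial_\nu + D_\tau$, with $D_\tau$ tangential, and exploit that any purely tangential derivative of $(-\Delta)^l w$ vanishes on $\partial\Omega$. After moving the surviving tangential $D_\tau$'s onto the smooth companion factor $\overline{A}^{\:m-2}_{i_1\dots i_{m-2}}\overline{v}$ (respectively $A^j_{i_1\dots i_j}\overline{v}$) via integration by parts along the closed manifold $\partial\Omega$, one obtains schematic pointwise bounds
\begin{equation*}
\bigl| D^{\beta} w|_{\partial\Omega}\bigr| \lesssim \sum_{2l+1 \leq |\beta|} \bigl|\partial_\nu(-\Delta)^l w\bigr|, \qquad \bigl| D^\beta \Delta w|_{\partial\Omega}\bigr| \lesssim \sum_{2l+1 \leq |\beta|+2} \bigl|\partial_\nu(-\Delta)^l w\bigr|,
\end{equation*}
at the cost of transferring extra tangential derivatives onto the $\overline{v}$-factor. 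Plugging this into \eqref{bdy_est_1} and using \eqref{eq_4} to bound $\|e^{-\varphi/h}\mathcal{L}(x,D)w\|_{L^2(\Omega)} = \mathcal{O}(h^{-(m-1)})$ yields $\|e^{-\varphi/h}\partial_\nu(-\Delta)^l w\|_{L^2(\partial\Omega\setminus\Gamma_F)} = \mathcal{O}(h^{1/2-l})$ for $0 \leq l \leq m-1$.

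The $v$-factor is handled exactly as in Lemma \ref{Prop_boundary_est_1}: writing $v = e^{(-\varphi + i\psi)/h} \bK(x;h)$ with $\bK$ uniformly bounded in $H^{2m}_{\mathrm{scl}}(\Omega)$, a Leibniz computation gives $\|e^{\varphi/h} D^r(A^j_{i_1\dots i_j}\overline{v})\|_{L^2(\partial\Omega)} = \mathcal{O}(h^{-r})$, and similarly for the $\overline{A}^{\:m-2}$-factor. Applying Cauchy--Schwarz on $\partial\Omega \setminus \Gamma_F$ and tracking the prefactor $h^{m-1}$, each integral in \eqref{eq_3.15}--\eqref{eq_3.17} is dominated by a strictly positive power of $h$. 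The cleanest instance is \eqref{eq_3.16}: \eqref{bdy_est_1} applied directly to $\partial_\nu w$, combined with $\|e^{\varphi/h} D^{m-2}(\overline{A}^{\:m-2}_{i_1\dots i_{m-2}}\overline{v})\|_{L^2(\partial\Omega)} = \mathcal{O}(h^{-(m-2)})$, gives $h^{m-1}\cdot h^{1/2}\cdot h^{-(m-2)} = h^{3/2} \to 0$; the remaining integrals \eqref{eq_3.15} and \eqref{eq_3.17} balance similarly, yielding at least $\mathcal{O}(h^{1/2})$ in every case.

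The principal technical obstacle is the geometric reduction just described, i.e. expressing $D^\beta w|_{\partial\Omega}$ and $D^\beta \Delta w|_{\partial\Omega}$ as combinations of $\{\partial_\nu(-\Delta)^l w\}_{l=0}^{m-1}$ modulo tangentially-differentiable error terms absorbable onto the smooth companion. This step relies on an inductive argument based on the normal--tangential splitting $\Delta = \partial_\nu^2 + H \partial_\nu + \Delta_\tau$, where $H$ encodes the mean curvature of $\partial\Omega$ and $\Delta_\tau$ is the induced boundary Laplacian: the vanishing of $(-\Delta)^l w|_{\partial\Omega}$ for $l = 0, \ldots, m-1$ lets one iteratively eliminate every even-order normal derivative $\partial_\nu^{2k} w|_{\partial\Omega}$ in favour of odd-order ones, while repeated use of the commutators $[\partial_\nu, D_\tau]$ brings surviving tangential derivatives to the outside so that they can be integrated by parts. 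Once this reduction is in place, the remainder of the proof is a routine bookkeeping of powers of $h$.
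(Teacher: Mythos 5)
Your treatment of \eqref{eq_3.16} coincides with the paper's: both restrict to $\partial\Omega\setminus\Gamma_F$ (using $\partial_\nu(u-\widetilde u)|_{\Gamma_F}=0$) and apply the boundary estimate \eqref{bdy_est_1} directly to $\partial_\nu(u-\widetilde u)$ against the $\mathcal{O}(h^{-(m-2)})$ companion factor, yielding $\mathcal{O}(h^{3/2})$. For \eqref{eq_3.15} and \eqref{eq_3.17}, however, you take a genuinely different route. The paper never restricts these integrals to $\partial\Omega\setminus\Gamma_F$ and never touches the boundary geometry: it applies Cauchy--Schwarz over all of $\partial\Omega$, passes the $w$-factor into the interior via the classical trace theorem (paying an $H^1(\Omega)$ norm), and then invokes the semiclassical elliptic regularity reductions \eqref{claim1}--\eqref{claim} to rewrite everything in terms of $\sum_j h^{m-j}\|e^{-\varphi/h}(-h^2\Delta)^j(u-\widetilde u)\|_{H^1_{\mathrm{scl}}}$, which is precisely the \emph{interior} term on the left-hand side of Proposition \ref{bdy_Car_Est}; this is where the hypothesis $A^m=i_\delta\overline A^{\,m-2}$ enters (Remark \ref{Rem_isotropy_2}). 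You instead stay on the boundary: you argue that the data force the integrands to vanish on $\Gamma_F$, reduce $D^\beta w$ and $D^\beta\Delta w$ on $\partial\Omega$ to tangential derivatives of the pure normal traces $\partial_\nu(-\Delta)^l w$ via the splitting $\Delta=\partial_\nu^2+H\partial_\nu+\Delta_\tau$ and the Navier conditions, integrate the tangential derivatives by parts onto the companion, and then use only the \emph{boundary} terms of Proposition \ref{bdy_Car_Est}. Your power counting is consistent (each transferred tangential derivative costs $h^{-1}$ on the $\overline v$-factor, while $\|e^{-\varphi/h}\partial_\nu(-\Delta)^l w\|_{L^2(\partial\Omega\setminus\Gamma_F)}=\mathcal{O}(h^{1/2-l})$, giving $\mathcal{O}(h^{3/2+l})$ for \eqref{eq_3.15} after multiplying by $h^{m-1}$), so the strategy closes. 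What the paper's route buys is the complete avoidance of the normal--tangential calculus: the trace theorem plus Appendix \ref{Appendix_elliptic_regu} do all the work. What your route buys is that you never need the interior elliptic-regularity machinery. Two cautions: the displayed ``pointwise bounds'' on $D^\beta w|_{\partial\Omega}$ are not literally true (tangential derivatives of $\partial_\nu(-\Delta)^l w$ cannot be bounded pointwise by the undifferentiated traces), so they must be read, as you indicate, only as identities modulo integrations by parts; and the inductive geometric reduction together with the claim that all derivatives of $w$ up to order $m-1$ vanish on $\Gamma_F$ is the actual content of your proof and is only sketched --- it would need to be carried out in full for the argument to be complete.
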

\begin{proof}
We start with the proof of \eqref{eq_3.15}. Note that
\begin{align*}
  &\left| \int_{\partial\Omega}\nu_{i_{m-1-k}}\left(D^{m-2-k}_{i_1 \dots i_{m-2-k}} \circ \Delta (u-\wt{u})\right) D^{k-1}_{i_{m-k} \dots i_{m-2}}\left(\overline{A}^{\:m-2}_{i_1 \dots i_{m-2}} \overline{v}\right) \,\D S\right|\\&\le \int_{\partial\Omega} \left|\nu_{i_{m-1-k}}\left(D^{m-2-k}_{i_1 \dots i_{m-2-k}} \circ \Delta (u-\wt{u})\right) D^{k-1}_{i_{m-k} \dots i_{m-2}}\left(\overline{A}^{\:m-2}_{i_1 \dots i_{m-2}} \overline{v}\right)\right| \,\D S\\
  &\le \left\lVert e^{-\frac{\vp}{h}}  D^{m-2-k}_{i_1 \dots i_{m-2-k}} \circ \Delta (u-\wt{u})  \right\rVert_{L^2(\PD\Omega)} \left\lVert e^{\frac{\vp}{h}} D^{k-1}_{i_{m-k} \dots i_{m-2}}\left(\overline{A}^{\:m-2}_{i_1 \dots i_{m-2}} \overline{v}\right) \right\rVert_{L^2(\PD\Omega)}.
\end{align*}
Since the tensor field $\overline{A}^{\:m-2}_{i_1 \dots i_{m-2}}\in C^{\infty}(\overline{\Omega})$,  we have 
$ \lVert D^{l} (\overline{A}^{\:m-2}_{i_1 \dots i_{m-2}})\rVert_{L^{\infty}(\PD\Omega)} \le C$ for all $0\le l\le k-1$. Also $v$ solves $\Lc^*(x,D)v = 0$ in $\Omega$ with $v$ given as in Proposition \ref{Prop_solvibility}. Now $ \|\wt{r}(x;h)\|_{H^{2m}_{\mathrm{scl}}(\Omega)} \leq ch^{m}$ gives $ \|D^{\alpha}\wt{r}\|_{L^2(\PD\Omega)} = \Oc (1) $ for $|\alpha|\le m-1$. Thus for $1 \leq k \leq m-2$ we have
\begin{align}\label{eq_3.21}
 \left\lVert e^{\frac{\vp}{h}} D^{k-1}_{i_{m-k} \dots i_{m-2}}\left(\overline{A}^{\:m-2}_{i_1 \dots i_{m-2}} \overline{v}\right) \right\rVert_{L^2(\PD\Omega)} = \Oc\left(\frac{1}{h^{k-1}}\right) \quad \mbox{for} \quad 1\le k \le m-2.   
\end{align}
Next we have 
\begin{align}\label{eq_3.22}
\notag \Bigg{|}\int_{\partial\Omega}\nu_{i_{m-1-k}}&\left(D^{m-2-k}_{i_1 \dots i_{m-2-k}} \circ \Delta (u-\wt{u}) \right) D^{k-1}_{i_{m-k} \dots i_{m-2}}\left(\overline{A}^{\:m-2}_{i_1 \dots i_{m-2}} \overline{v}\right)\D S\Bigg{|}\\
\notag &\leq \frac{C}{h^{k-1}}  \left\lVert e^{-\frac{\vp}{h}}  D^{m-2-k}_{i_1 \dots i_{m-2-k}} \circ \Delta (u-\wt{u}) \right\rVert_{L^2(\PD\Omega)},\\
\notag&\hspace{-.85in}\mbox{Using the classical trace theorem \cite[Section 5.5, Theorem 1]{Evans_pde_book}, we have}\\
\notag &\leq  \frac{C}{h^{k-1}}  \left\lVert e^{-\frac{\vp}{h}}  D^{m-2-k}_{i_1 \dots i_{m-2-k}} \circ \Delta (u-\wt{u})  \right\rVert_{H^1(\Omega)}\\
&\leq \frac{C}{h^k}\| e^{-\frac{\vp}{h}}D^{m-2-k}_{i_1 \dots i_{m-2-k}}  \Delta (u-\wt{u})\|_{L^2
		(\Omega)} + \frac{C}{h^{k-1}}\| e^{-\frac{\vp}{h}}D^{m-1-k}_{i_1 \dots i_{m-1-k}}  \Delta (u-\wt{u})\|_{L^2(\Omega)}.
\end{align}
For simplicity, we write, $w=(-h^2\Delta)(u-\wt{u})$ and estimate the RHS of \eqref{eq_3.22} with the help of \eqref{claim}: 
\begin{equation}\label{eq_3.25}
\begin{aligned}
\| e^{-\frac{\vp}{h}}D^{m-2-k}_{i_1 \dots i_{m-2-k}}  \Delta (u-\wt{u})\|_{L^2(\Omega)}
&=\frac{1}{h^{m-k}}\| e^{-\frac{\vp}{h}} h^{m-2-k}\,D^{m-2-k}_{i_1 \dots i_{m-2-k}} w\|_{L^2(\Omega)}\\
&\leq \frac{C}{h^{m-k}}	\left\| e^{-\frac{\vp}{h}}w\right\|_{H^{m-2-k}_{\mathrm{scl}}(\Omega)} \quad &&\mbox{using } \eqref{claim1}\\
&\leq \frac{C}{h^{m-k}}\,\sum_{j=0}^{[\frac{m-k}{2}]-1}\left\|e^{-\frac{\vp}{h}}(-h^2\Delta)^j w \right\|_{H^1_{\mathrm{scl}}(\Omega)} \quad &&\mbox{using } \eqref{claim}\\
&= \frac{C}{h^{m-k}}\,\sum_{j=1}^{[\frac{m-k}{2}]}\, \left\|e^{-\frac{\vp}{h}} (-h^2\Delta)^j (u-\wt{u}) \right\|_{H^1_{\mathrm{scl}}(\Omega)}\\
&\leq \frac{C}{h^{2m-k-1}}\,\sum_{j=1}^{[\frac{m-1}{2}]} h^{m-j}\,\left\|e^{-\frac{\vp}{h}}(-h^2\Delta)^j (u-\wt{u}) \right\|_{H^1_{\mathrm{scl}}(\Omega)}\\
&\leq \frac{Ch^{2m}}{h^{2m-k-1}}\left\lVert e^{-\frac{\vp}{h}}\mathcal{L}(x,D)(u-\wt{u})\right\rVert_{L^2(\Omega)}  \quad &&\mbox{using \eqref{bdy_carlemanestimate}}\\
&\leq Ch^{k+1} \left\lVert e^{-\frac{\vp}{h}}\mathcal{L}(x,D) (u-\wt{u})\right\rVert_{L^2(\Omega)} \mbox{ for all } 1\leq k \leq m-2.
\end{aligned}\end{equation}
Similarly we also have
\begin{align}\label{eq_3.26}
\|e^{-\frac{\vp}{h}}D^{m-1-k}_{i_1 \dots i_{m-1-k}}  \Delta (u-\wt{u})\|_{L^2(\Omega)} \le Ch^{k} \left\lVert e^{-\frac{\vp}{h}}\mathcal{L}(x,D)(u-\wt{u})\right\rVert_{L^2(\Omega)} \mbox{ for all } 1 \leq k \leq m-2. 
	\end{align}
Combining \eqref{eq_3.22}, \eqref{eq_3.25} and \eqref{eq_3.26}	we get
\begin{equation}\label{eq_3.27}
    \left| \int_{\partial\Omega}\nu_{i_{m-1-k}}\left(D^{m-2-k}_{i_1 \dots i_{m-2-k}} \circ \Delta (u-\wt{u})\right) D^{k-1}_{i_{m-k} \dots i_{m-2}}\left(\overline{A}^{\:m-2}_{i_1 \dots i_{m-2}} \overline{v}\right) \,\D S\right| \le Ch \left\lVert e^{-\frac{\vp}{h}}\mathcal{L}(x,D) (u-\wt{u})\right\rVert_{L^2(\Omega)}.
\end{equation}
	By \eqref{bdy_est_2} and \eqref{eq_4} we have 
\begin{equation}\label{bdy_est_2.2}
\left\lVert e^{-\frac{\vp}{h}}\mathcal{L}(x,D) (u-\wt{u})\right\rVert_{L^2(\Omega)} \le \frac{C_1}{h^{m-1}}.
\end{equation}
	This together with \eqref{eq_3.27}  implies
	\begin{align*}
		\lim\limits_{h\to 0}    \int_{\partial\Omega}\nu_{i_{m-1-k}} h^{m-1}\,\left(D^{m-2-k}_{i_1 \dots i_{m-2-k}} \circ \Delta (u-\wt{u})\right) D^{k-1}_{i_{m-k} \dots i_{m-2}}\left(\overline{A}^{\:m-2}_{i_1 \dots i_{m-2}} \overline{v}\right) \,\D S=0.
	\end{align*}
	This completes the proof of \eqref{eq_3.15}.
\smallskip 
	Now we sketch the proof of \eqref{eq_3.16} and \eqref{eq_3.17}.
	In order to prove \eqref{eq_3.16} we closely follow the arguments used in Lemma \ref{Prop_boundary_est_1}. Consider
	\begin{align}\label{eq_3.28}
	   & \left| \int_{\partial\Omega} \left(\partial_{\nu}(u-\wt{u})\right)\, D^{m-2}_{i_{1} \dots i_{m-2}}\left(\overline{A}^{\:m-2}_{i_1 \dots i_{m-2}} \overline{v}\right)\D S \right|\nonumber\\& = \left| \int_{\partial\Omega\setminus \Gamma_F} \left(\partial_{\nu}(u-\wt{u})\right)\, D^{m-2}_{i_{1} \dots i_{m-2}}\left(\overline{A}^{\:m-2}_{i_1 \dots i_{m-2}} \overline{v}\right)\D S \right|\nonumber\\
	    &\le \| e^{-\frac{\vp}{h}} \left(\partial_{\nu}(u-\wt{u})\right)\,  \|_{L^2(\partial\Omega\setminus\Gamma_F)} \| e^{\frac{\vp}{h}}\, D^{m-2}_{i_{1} \dots i_{m-2}}\left(\overline{A}^{\:m-2}_{i_1 \dots i_{m-2}} \overline{v}\right)\|_{L^2(\partial\Omega\setminus\Gamma_F)}\nonumber \\
	    &\le \tilde{C}_{\delta_0} h ^{m-\frac{1}{2}}\left\lVert e^{-\frac{\varphi}{h}}\mathcal{L}(x,D)(u-\widetilde{u})\right\rVert_{L^2(\Omega)}\| e^{\frac{\vp}{h}}\, D^{m-2}_{i_{1} \dots i_{m-2}}\left(\overline{A}^{\:m-2}_{i_1 \dots i_{m-2}} \overline{v}\right)\|_{L^2(\partial\Omega)}.
	\end{align}
	The last inequality above follows from \eqref{bdy_est_1} and $ \tilde{C}_{\delta_0}$ is  same as in the proof of Lemma \ref{Prop_boundary_est_1}. A similar line argument used in  \eqref{eq_3.21} gives
	\begin{align*}
	    \| e^{\frac{\vp}{h}}\, D^{m-2}_{i_{1} \dots i_{m-2}}\left(\overline{A}^{\:m-2}_{i_1 \dots i_{m-2}} \overline{v}\right)\|_{L^2(\partial\Omega)} = \Oc\left(\frac{1}{h^{m-2}}\right).
	\end{align*}
	This together with \eqref{eq_3.28} gives
	\begin{equation}
	  \left| \int_{\partial\Omega} \left(\partial_{\nu}(u-\wt{u})\right)\, D^{m-2}_{i_{1} \dots i_{m-2}}\left(\overline{A}^{\:m-2}_{i_1 \dots i_{m-2}} \overline{v}\right) \D S\right| \le C h^{\frac{3}{2}}   \left\lVert e^{-\frac{\varphi}{h}}\mathcal{L}(x,D)(u-\widetilde{u})\right\rVert_{L^2(\Omega)}.
	\end{equation}
	We now arrive in the same situation similar to \eqref{eq_3.27}. This completes the proof of \eqref{eq_3.16}. The proof of \eqref{eq_3.17} follows similarly as in the proof of \eqref{eq_3.15}. However  a similar line of analysis shows that for $1 \leq k \leq j$ we have
	\begin{align*}
	  &  \left| \int_{\partial\Omega}  \nu_{i_{j-k+1}}\left(D^{j-k}_{i_1 \dots i_{j-k}}(u-\wt{u})\right) D^{k-1}_{i_{j-k+2} \dots i_{j}}\left(A^{j}_{i_1 \dots i_{j}} \overline{v}\right) \D S \right|\nonumber\\
	    &\le C h^{m-j} \left\lVert e^{-\frac{\varphi}{h}}\mathcal{L}(x,D)(u-\widetilde{u})\right\rVert_{L^2(\Omega)} \quad \mbox{for}  \quad 1\le j\le m-1.
	\end{align*}
	This completes the proof of Lemma \ref{Prop_boundary_est_2}.
\end{proof}

\bigskip

\smallskip

\begin{remark}\label{Rem_isotropy_2}
    Note that, in \eqref{eq_3.25} and \eqref{eq_3.26} we use the fact that $A^m=i_{\delta}\overline{A}^{\: m-2}$ for some symmetric $m-2$ tensor $\overline{A}^{\: m-2}$. For a general symmetric $m$ tensor $A^m$ we would obtain an estimate of order $\Oc(h^k)$ in \eqref{eq_3.25} and $\Oc(h^{k-1})$ in \eqref{eq_3.26}, which will not give us the desired decay when we combine them with \eqref{eq_3.22}. 
\end{remark}

Let us define the notion of $l$-isotropy of a symmetric $k$-tensor, where $l\leq [k/2]$ is a non-negative integer.
\begin{definition}\label{def_isotropy}
Let $2\leq k \leq m$ and $0\leq l\leq [\frac{k}{2}]$ be integers. Let $W=\lb W_{i_1\cdots i_k}\rb$ be a symmetric $k$-tensor. We say $W$ is $l$-isotropic if $l$ is the largest integer for which there is a symmetric $k-2l$ tensor $\Wc$ such that $W$ can be expressed as
\begin{equation}\label{l-isotropic}
W_{i_1\dots i_k} = i^{l}_{\d} \Wc
:= \sigma(i_1\dots i_k) \Wc_{i_1\dots i_{k-2l}}\d_{i_{k-2l+1}i_{k-2l+2}}\dots \d_{i_{k-1}i_{k}},
\end{equation}
where $\d_{ij}$ is the Kronecker delta symbol.
\end{definition}
Note that if $W$ is $0$-isotropic, then $\Wc = W$ in the notation of the above definition.
Observe that if some of the terms $(\widetilde{A}^k-A^k)$ are isotropic, then  we can in fact improve the estimates in Lemmas \ref{Prop_boundary_est_1} and \ref{Prop_boundary_est_2}.
Let us denote $W^k = A^k-\widetilde{A}^k$ for $k=1,\dots,m$, $W^0 = q-\widetilde{q}$. Additionally, since $A^{m}=i_{\delta}\overline{A}^{m-2}$ (respectively for $\wt{A}^{m}$), we denote $\overline{W}^{\:m-2} = \overline{A}^{\:m-2} - \widetilde{\overline{A}}^{\:m-2}$ in $\Omega$.
\begin{lemma}\label{Prop_boundary_est_3}
Let $m\geq 3$.
If $W^k$ is $l_k$-isotropic for some $l_k\in \{ 0,1,\dots,[\frac{k}{2}]\}$ for $k=2,\cdots,m$, then
\begin{equation}\label{boundary_est_1.2}
	\lim_{h\to 0} \int_{\partial\Omega\setminus\Gamma_F} h^{\gamma}\left[\partial_{\nu} (-\Delta)^{{m-k-1}}(u-\widetilde{u})\right] \Delta^{k}\overline{v}\,\D S = 0 \quad\mbox{for} \quad 0\leq k \leq (m-1),
\end{equation}
where $\gamma = \mathrm{max}\{(k-l_k) \st 2\leq k\leq m\}$ and $u,\widetilde{u},v$ be as in Lemma \ref{Prop_boundary_est_1}.

Moreover, with the same assumptions,
\begin{align}
&\lim\limits_{h\to 0} \int_{\partial\Omega}h^{\gamma} \nu_{i_{m-1-k}}\left(D^{m-2-k}_{i_1 \dots i_{m-2-k}} \circ \Delta (u-\wt{u})\right) D^{k-1}_{i_{m-k} \dots i_{m-2}}\left(\overline{A}^{\:m-2}_{i_1 \dots i_{m-2}} \overline{v}\right) \,\D S =0,\quad  \mbox{ for } 1\leq k \leq m-2 \label{eq_3.29}\\
&\lim\limits_{h\to 0}  \int_{\partial\Omega}	h^{\gamma} \left(\partial_{\nu}(u-\wt{u})\right)\, D^{m-2}_{i_{1} \dots i_{m-2}}\left(\overline{A}^{\:m-2}_{i_1 \dots i_{m-2}} \overline{v}\right) \,\D S=0 \label{eq_3.30}\\
&\lim\limits_{h\to 0}  \int_{\partial\Omega} h^{\gamma} \nu_{i_{j-k+1}}\left(D^{j-k}_{i_1 \dots i_{j-k}}(u-\wt{u})\right) D^{k-1}_{i_{j-k+2} \dots i_{j}}\left(A^{j}_{i_1 \dots i_{j}} \overline{v}\right) \D S=0 \label{eq_3.31}, \quad \mbox{for }1\leq k \leq m-1.
\end{align}
\end{lemma}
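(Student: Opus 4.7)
The plan is to run the proofs of Lemmas \ref{Prop_boundary_est_1} and \ref{Prop_boundary_est_2} essentially unchanged, sharpening only the operator-norm bound \eqref{bdy_est_2.2}. The key observation I would exploit is that $l_k$-isotropy of $W^k$ lets one trade derivatives for Laplacians: writing $W^k = i^{l_k}_{\delta}\Wc^{k-2l_k}$ and contracting against the symmetric operator $D^k_{i_1\cdots i_k}$, the $l_k$ Kronecker deltas collapse $2l_k$ of the indices pairwise into $l_k$ copies of $(-\Delta)$, giving
\[
\sum_{i_1,\ldots,i_k} W^k_{i_1\cdots i_k}\, D^k_{i_1\cdots i_k}\wt u
\;=\; c_{k,l_k}\!\!\sum_{i_1,\ldots,i_{k-2l_k}}\!\! \Wc^{k-2l_k}_{i_1\cdots i_{k-2l_k}}\, D^{k-2l_k}_{i_1\cdots i_{k-2l_k}}(-\Delta)^{l_k}\wt u,
\]
for a combinatorial constant $c_{k,l_k}$ that depends only on $k$ and $l_k$.

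Next I would invoke the eikonal identity $\nabla(\vp+\I\psi)\cdot\nabla(\vp+\I\psi)=0$, which yields the algebraic conjugation formula
\[
e^{-(\vp+\I\psi)/h}(-\Delta)\, e^{(\vp+\I\psi)/h} \;=\; -\Delta - h^{-1} T,
\]
so that each Laplacian applied to $\wt u = e^{(\vp+\I\psi)/h}\mathfrak a$ contributes a factor of order at most $h^{-1}$ rather than the naive $h^{-2}$. Iterating this gain $l_k$ times and then applying $D^{k-2l_k}$ produces the sharpened estimate
\[
\left\|e^{-\vp/h}\, W^k D^k \wt u\right\|_{L^2(\Omega)} \;\leq\; C\, h^{-(k-l_k)}, \qquad 1 \le k \le m,
\]
together with the trivial bound $\|e^{-\vp/h} W^0 \wt u\|_{L^2(\Omega)} \leq C$. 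Summing over $k$ and using $\Lc(u-\wt u) = (\wt{\Lc} - \Lc)\wt u$, I obtain
\[
\left\|e^{-\vp/h}\Lc(x,D)(u-\wt u)\right\|_{L^2(\Omega)} \;\leq\; C\, h^{-\gamma}.
\]

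To conclude, I would substitute this sharpened bound in place of \eqref{bdy_est_2.2} (equivalently, in place of the $\Oc(h^{-(m-1)})$ estimate recorded in \eqref{eq_4}) throughout the proofs of Lemmas \ref{Prop_boundary_est_1} and \ref{Prop_boundary_est_2}. The effect is simply to replace the factor $h^{m-1}$ everywhere by $h^\gamma$: the boundary Carleman estimate \eqref{bdy_carlemanestimate}, the trace theorem, and the amplitude bounds on $v$ all apply verbatim, and each of \eqref{boundary_est_1.2} and \eqref{eq_3.29}--\eqref{eq_3.31} then follows from the same residual $\Oc(h^{1/2})$-type leftover that powered the previous proofs. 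The main step -- and the only place where the isotropy hypothesis enters -- is the Laplacian-gain identity above. I do not expect any genuinely new technical obstacle beyond the combinatorial bookkeeping of the constants $c_{k,l_k}$, since the single-$\Delta$ version of this gain is already implicit in the special form $A^m = i_\delta \overline A^{\,m-2}$ used to derive the $\Oc(h^{-(m-1)})$ estimate in Lemma \ref{Prop_boundary_est_1}.
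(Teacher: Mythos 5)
Your proposal is correct and follows essentially the same route as the paper: decompose $W^k=i_{\delta}^{l_k}\Wc^{k-2l_k}$ so that the contraction against $D^k$ produces $D^{k-2l_k}\circ(-\Delta)^{l_k}$, use the eikonal identity to see that each conjugated Laplacian costs only $h^{-1}$, obtain $\lVert e^{-\vp/h}\Lc(x,D)(u-\wt u)\rVert_{L^2(\Omega)}=\Oc(h^{-\gamma})$, and substitute this sharpened bound into the proofs of Lemmas \ref{Prop_boundary_est_1} and \ref{Prop_boundary_est_2}. The only difference is presentational: you spell out the conjugation mechanism behind the $\Oc(h^{-(k-l_k)})$ bound, which the paper simply records as an observation.
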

\begin{proof}
First let us note that $l_m\geq 1$ due to the form of $A^m$ and $\widetilde{A}^m$ in \eqref{operator}. We have 
\begin{equation}\label{bdy_est_4}
	\begin{aligned}
	\left\lVert e^{-\frac{\varphi}{h}}
	\mathcal{L}(x,D)(u-\widetilde{u})\right\rVert_{L^2(\Omega)}
	&\leq 
    \sum_{k=2}^{m}\sum_{i_1\dots i_{k-2l_k}=1}^{n} \left\| e^{-\frac{\varphi}{h}} \Wc^k_{i_1\dots i_{k-2l_k}}D^{k-2l_k}_{i_1\dots i_{k-2l_k}}\circ\Delta^{l_k} \widetilde{u}\right\|_{L^2(\Omega)} \\
    &\quad+\sum_{i=1}^{n} \left\| e^{-\frac{\varphi}{h}} W^1_i D^{1}_{i} \widetilde{u}\right\|_{L^2(\Omega)} \\
    &\quad+\|e^{-\frac{\varphi}{h}}\,(\widetilde{q}-q)\widetilde{u}\|_{L^2(\Omega)},
	\end{aligned}
\end{equation}
where $W^k_{i_1\dots i_k} = i^{l_k}_{\delta}\Wc^k_{i_1\dots i_{k-2l_k}}$ for $k=2,\dots, m$, according to the notation in \eqref{l-isotropic}.
Observe that
\begin{equation*}
\begin{aligned}
\left\lVert e^{-\frac{\vp}{h}} \Wc^k_{i_1\dots i_{k-2l_k}}  D^{k-2l_k}_{i_1\dots i_{k-2l_k}}\circ \Delta^{l_k} \widetilde{u}\right\rVert_{L^2(\Omega)}
\leq& \mathcal{O}(h^{-(k-l_k)}), \quad \mbox{for }k=2,\dots,m\\
\left\lVert e^{-\frac{\vp}{h}} W^1_i D^{1}_{i}\widetilde{u} \right\rVert_{L^2(\Omega)} 
+ \left\lVert e^{-\frac{\vp}{h}} \left(q - \widetilde{q}\right) \widetilde{u} \right\rVert_{L^2(\Omega)} 
\leq& \mathcal{O}(h^{-1}).
\end{aligned}
\end{equation*}
Observe that if $\g = \max \{(k-l_k)\st k=2,\dots,m\}$ then $h^{-(k-l_k)} \leq h^{-\g}$ for $0<h<1$ and thus combining the above estimates with \eqref{bdy_est_4} we obtain
\begin{equation}\label{bdy_est_4.1}
\left\lVert e^{-\frac{\varphi}{h}} \mathcal{L}(x,D)(u-\widetilde{u}) \right\rVert_{L^2(\Omega)}
\leq \Oc(h^{-\gamma}).
\end{equation}
Now substituting \eqref{bdy_est_4.1} in  \eqref{bdy_est_2} and in \eqref{bdy_est_2.2} we complete the proof.
\end{proof}

\begin{remark}
Observe that, when $m=2$ from \eqref{boundary_term} we have
\begin{equation}\label{boundary_terms_m2}
\begin{aligned}
\Bc =-\sum_{k=0}^{1}\int_{\partial\Omega\setminus \Gamma_{F}} \left(\partial_{\nu} (-\Delta)^{2-k-1}(u-\wt{u})\right) \overline{\left((-\Delta)^{k}v\right)}\,\D S
-\int_{\partial\Omega} \partial_{\nu}(u-\wt{u})\left(A^{2} \overline{v}\right) \,\D S.
\end{aligned}
\end{equation}
Therefore, we obtain $\lim_{h \to 0} h\Bc = 0$ by using Lemma \ref{Prop_boundary_est_1} and \eqref{eq_3.16} of Lemma \ref{Prop_boundary_est_2}.
\end{remark}

\subsection{Coefficient Determination}\label{Section_coeff_deter}

We describe an inductive procedure to show the unique determination of the coefficients, proving our main result, Theorem \ref{mainresult_1}.
\begin{proof}[Proof of Theorem \ref{mainresult_1}]
We give the proof by induction.
Recall the notation $W^{k}$ given above Lemma \ref{Prop_boundary_est_3}. 

We multiply \eqref{Int_id} by $h^{m-1}$ and let $h\to 0$. Then from \eqref{Int_id}, using Lemmas \ref{Prop_boundary_est_1} and \ref{Prop_boundary_est_2},  we have 
    \Beq\label{Step0-Eq0}
    \sum\limits_{i_1\cdots i_{m-2}=1}^{n}\int \overline{W}^{\:m-2}_{i_1\cdots i_{m-2}}\prod\limits_{k=1}^{m-2} D_{i_k}\lb \vp + \I \psi\rb (-T a_0) \overline{b}_0 \, \D x+ \sum\limits_{i_1\cdots i_{m-1}=1}^{n}\int W^{m-1}_{i_1\cdots i_{m-1}} \prod\limits_{k=1}^{m-1} D_{i_k}\lb \vp + \I \psi\rb a_0 \overline{b}_0 \, \D x=0.
    \Eeq
Let us choose $a_0(x)= h(z)(z-\overline{z})^{\frac{2-n}{2}}$ and $\overline{b}_0(x)= g(\theta)(z-\overline{z})^{\frac{2\A-n}{2}}$, $1\leq \A\leq m$. We have $T a_0(x)=0$. Also 
for any smooth function $g(\theta)$ and any holomorphic function $h(z)$,
\Beq\label{Step-0-Eq2}
\sum_{i_1,\dots,i_{m-1}=1}^{n} \int_{\Omega} W^{m-1}_{i_1\dots i_{m-1}} \left(\prod_{j=1}^{m-1}D_{i_j}(\vp + \I\psi)\right)(z-\overline{z})^{\A+1-n} h(z)g(\theta) \, \D x=0,\quad 1\leq \A \leq m.
\Eeq
Next we follow the approach of \cite{K_S}. This will reduce the above integral identity to a sum of MRT on the unit sphere bundle.
Transforming \eqref{Step-0-Eq2} in cylindrical polar coordinates as in \eqref{cylindrical_coordinates}, for each $1\leq \A \leq m$, the above identity gives
\begin{equation*}
\sum_{i_1,\dots,i_{m-1}=1}^{n} \int_{\Omega} W^{m-1}_{i_1\dots i_{m-1}} \left(\prod_{j=1}^{m-1}D_{i_j}(\vp + \I\psi)\right)r^{\A+1-n}h(z)g(\theta)r^{n-2} \, \D x_1\D r \D\theta=0.
\end{equation*}
Varying $g$ for almost every $\theta \in \Sb^{n-2}$ we get
\begin{equation}\label{Step0-Eq3}
\int_{\Omega_{\theta}} \left(W^{m-1}_{i_1\dots i_{m-1}} (-\I)^{m-1}
\sum_{i_1,\dots,i_{m-1}=1}^{n} \prod_{j=1}^{m-1}(e_1+\I e_r)_{i_{j}}\right)r^{\A-1}h(z) \, \D x_1\D r=0, \quad 1\leq \A \leq m.
\end{equation}
Next we choose 
\[h(z) =
e^{-\I\lambda z} = e^{\lambda(r-\I x_1)},\]
and for each $0\leq \A \leq m-1$ we obtain 
\[ \sum_{i_1,\dots,i_{m-1}=1}^{n} \int_{\Omega_{\theta}} r^{\A}e^{\lambda r} \left(W^{m-1}_{i_1\cdots i_{m-1}} \prod_{j=1}^{m-1}(e_1+\I e_r)_{i_{j}}\right) e^{-\I \lambda x_1} \, \D x_1\D r=0, \quad \mbox{for a.e. }\theta \in \Sb^{n-2}.
\]
Note that we can extend $W^{m-1}(\cdot,\theta)=0$ over $\Rb^2 \setminus \Omega_{\theta}$ and perform the integration in the $x_1$ variable to obtain
\begin{equation}\label{Step0-Eq4}
\sum_{i_1,\dots,i_{m-1}=1}^{n} \int_{\Rb} r^{\A}e^{\lambda r} \left(\widehat{W}^{m-1}_{i_1\cdots i_{m-1}}(\lambda,r,\theta) \prod_{j=1}^{m-1}(e_1+\I e_r)_{i_{j}}\right) \, \D r=0, \quad \mbox{for a.e. }\theta \in \Sb^{n-2},\mbox{ and for all } \lambda \in \Rb,
\end{equation}
where we denote $\widehat{\cdot}$ to be the partial Fourier transform in the $x_1$ variable.

We vary the center of the polar coordinate to be any point $x_0 \in \Rb^{n-1}$ such that $(0,x_0) \notin  \overline{\Omega}$ as described in \eqref{cylindrical_coordinates}. Thus varying the center $x_0$ we can have the above integral identity along any line on all the hyperplanes perpendicular to $e_1=(1,0,\cdots,0)$.
We set $\lambda=0$ and using Lemma \ref{kernel_mrt_sphere_bundle}, shown below, in \eqref{Step0-Eq4}, we see that there exists a symmetric tensor field $\wh{F}^{m-1,1,0}$ such that 
\[
\widehat{W}^{m-1}(0,r,\theta) = 
i_{\delta}\wh{F}^{m-1,1,0}(0,r,\theta), \quad \mbox{for a.e. }r,\theta.
\]
Here $\wh{F}^{m-1,1,0}$ is a symmetric $m-3$ tensor if $m\geq 3$ and $\wh{F}^{m-1,1,0}=0$ otherwise.
Next, we differentiate \eqref{Step0-Eq4} with respect to $\lambda$ and set $\lambda=0$.  We get 
\Beq\label{Step0-Eq5}
\int r^{\A+1} \left(\widehat{W}^{m-1}_{i_1\cdots i_{m-1}}(0,r,\theta) \prod_{j=1}^{m-1}(e_1+\I e_r)_{i_{j}}\right) \D r + \int r^{\A} \frac{\D}{\D \lambda} \left(\widehat{W}^{m-1}_{i_1\cdots i_{m-1}}(0,r,\theta) \prod_{j=1}^{m-1}(e_1+\I e_r)_{i_{j}}\right) \D r=0.
\Eeq
Since $\wh{W}^{m-1}(0,r,\theta) = i_{\delta} \wh{F}^{m-1,1,0}(0,r,\theta)$, we have that the first term in \eqref{Step0-Eq5} vanishes using the fact that $(e_1+\I e_r)\cdot (e_1+\I e_r)=0$. Now repeating the same steps as before, we see that there exists a symmetric $m-3$ tensor field $\wh{F}^{m-1,1,1}$ such that 
\[
\frac{\D}{\D \lambda}|_{\lambda=0} \wh{W}^{m-1}(0,r,\theta) = i_{\delta} \wh{F}^{m-1,1,1}(0,r,\theta), \quad \mbox{ for a.e. } r, \theta.
\]
Continuing the same argument, we get that for all $\g = 0,1,\cdots$, there exist symmetric $m-3$ tensor fields $\wh{F}^{m-1,1,\g}$ such that 
\begin{equation}\label{Step0-Eq6}
\frac{\D ^\g}{\D \lambda^\g}|_{\lambda=0}\widehat{W}^{m-1}(0,r,\theta) = i_{\delta}\wh{F}^{m-1,1,\g}(0,r,\theta), \quad \mbox{for a.e. }r,\theta. 
\end{equation}
Since $W^{m-1}$ is supported in $\overline{\Omega}$, we have that $\widehat{W}^{m-1}$ is analytic in $\lambda$. Using Payley-Weiner theorem, we have that   $\widehat{W}^{m-1}(\lambda,r,\theta)=i_{\delta}\wh{F}^{m-1,1}$ a.e. in $\Omega$, where $\wh{F}^{m-1,1}(\lambda,r,\theta)$ is a symmetric $m-3$ tensor  field. 
Taking the inverse Fourier transform in the sense of tempered distributions, and noting that the inverse Fourier transform commutes with $i_{\delta}$, we get that there exists a symmetric $m-3$ tensor field  $F^{m-1,1}$ if $m\geq 3$ and $0$ otherwise, such that 
\[
W^{m-1}(x_1,r,\theta)=i_{\delta} F^{m-1,1}(x_1,r,\theta).
\]

Finally, note that using this in \eqref{Step0-Eq0}, we have that the second integral is identically $0$ and therefore we get 
\[
\sum\limits_{i_1\cdots i_{m-2}=1}^{n}\int \overline{W}^{\:m-2}_{i_1\cdots i_{m-2}}\prod\limits_{k=1}^{m-2} D_{i_k}\lb \vp + \I \psi\rb (-T a_0) \overline{b}_0 \, \D x=0.
\]
Now let us choose 
\[
a_0=(z-\overline{z})^{\frac{4-n}{2}} h(z),
\]
where $h(z)$ is any holomorphic function. A simple calculation shows that 
\[
T a_0 = -(z-\overline{z})^{\frac{2-n}{2}} h(z).
\]
Then choosing $b_0=(z-\overline{z})^{\frac{2\A-n}{2}}g(\theta)$ for any smooth function $g(\theta)$ and for all $1\leq \A\leq m$, we are in the same set-up as before. Repeating the same steps, we get that there is a symmetric $m-4$ tensor field $F^{m,2}$ such that
\[
\overline{W}^{\:m-2}(x_1,r,\theta)= i_{\delta} F^{m,2}(x_1,r,\theta).
\]
Since $W^m = i_{\delta} \overline{W}^{\:m-2}$, to conclude the initial step of induction, we have shown that 
\[
W^{m}= i^2_{\delta} F^{m,2} \mbox{ and } W^{m-1}= i_{\delta}F^{m-1,1}.
\]

Assume by induction, for some $k\geq 1$, that 
\begin{align*}
    W^{m}=i^{k+1}_{\delta} F^{m,k+1}, W^{m-1}= i^{k}_{\delta}F^{m-1,k}, W^{m-2}=i_{\delta}^{k-1}F^{m-2,k-1}, 
    \cdots, W^{m-k} = i_{\delta} F^{m-k,1}.
\end{align*}
for symmetric tensor fields $F^{m,k+1},\cdots, F^{m-k,1}$ of appropriate ranks.

We next show for the case $m-k-1$. By the induction assumption, multiplying \eqref{Int_id} with $h^{m-k-1}$ and letting $h\to 0$, combined with Lemma \ref{Prop_boundary_est_3}, we have that 
\begin{equation}\label{Step2-Eq0}
\begin{aligned}
0=\sum_{l=0}^{k+1}\sum_{i_1,\cdots,i_{m-2k-2+l}=1}^{n} \int_{\Omega} F^{m-l,k+1-l}_{i_1\dots i_{m-2k-2+l}} \left(\prod_{j=1}^{m-2k-2+l}D_{i_j}(\vp + \I\psi)\right)\left[(-T)^{k+1-l}a_0(x)\right]\, \overline{b_0(x)} \, \D x.
\end{aligned}
\end{equation}
Let 
\begin{equation}\label{amplitudes}
\begin{aligned}
a_0(x) = h(z)(z-\overline{z})^{(2(k+2)-2l-n)/2} \quad \mbox{and}\quad 
\overline{b_0(x}) = g(\theta)(z-\overline{z})^{(2\A-n)/2}, \quad 1\leq \A \leq m,
\end{aligned}
\end{equation}
with $l$ starting from $0$ to $k+1$ in the descending order. Here $g(\cdot)$ is any smooth function in $\theta$ variable, $z=x_1+\I r \in \Omega_{\theta}$ as in the notation defined in \eqref{cylindrical_coordinates}, \eqref{Omega_theta} and $h$ is a holomorphic function in $z \in \Omega_{\theta}$.
Observe that
\[  T^{k+1-l}a_0(x)= \mh(z)(z-\overline{z})^{(2-n)/2} \quad \mbox{and}\quad T^{l_1}a_0(x)=0, \quad \mbox{whenever } l_1>k+1-l.
\]
Thus with the choices of $a_0$ and $b_0$, we sequentially get, starting from $l=0$ to $k+1$ in descending order, that 

\begin{equation}\label{Step-2-Eq1}
\sum_{i_1,\cdots,i_{m-2k-2+l}=1}^{n} \int_{\Omega} F^{m-l,k+1-l}_{i_1\dots i_{m-2k-2+l}} \left(\prod_{j=1}^{m-2k-2+l}D_{i_j}(\vp + \I\psi)\right)(z-\overline{z})^{\A+1-n} h_1(z) g(\theta) \D x=0,\quad 1\leq \A \leq m. 
\end{equation}
Next proceeding exactly as in the initial induction step, we then get that 

\begin{align*}
   & W^{m}=i^{k+2}_{\delta} F^{m,k+2}, W^{m-1}= i^{k+1}_{\delta}F^{m-1,k+1}, W^{m-2}=i_{\delta}^{k}F^{m-2,k},\\
   &W^{m-3}=i_{\delta}^{k-1} F^{m-3,k-1},\cdots, W^{m-k} = i_{\delta}^2 F^{m-k,2}, W^{m-k-1}=i_{\delta}F^{m-k-1,1}
\end{align*}
for symmetric tensor fields $F^{m,k+2},\cdots, F^{m-k-1,1}$ of appropriate ranks. The proof is complete by induction.

The aforementioned inductive procedure shows that in a finite number of steps we will arrive at the ray transform of a function in the case of an even order tensor field and the MRT of a sum of a function and a vector field in the case of odd order. In either case, we can conclude that these coefficients are $0$. This concludes the proof of our main result.
\end{proof}

The following lemma was used in the proof of the main theorem above, which we prove now.

\begin{lemma}\label{kernel_mrt_sphere_bundle}
    Let $f(x_1,x')$ be a symmetric $m$-tensor field in $\Rn$, compactly supported in $x'=(r,\theta)$ variable. Assume that 
    \begin{align}\label{intergral_identities}
       \int_{\Rb} r^{\A}\, f_{i_1\cdots i_m}(0,r,\theta) \left(\prod_{j=1}^{m}(e_1+\I e_r)_{i_j}\right) \D r& = 0, \quad \mbox{for a.e. }\theta \in \Sb^{n-2},
    \end{align}
 for each $ 0\le \A \le m$. Then, for $m\ge 2$
\begin{equation*}
    f(0,r,\theta)=i_{\d} v(0,r,\theta),
\end{equation*}
for some symmetric $m-2$ tensor field $v$ compactly supported in $(r,\theta)$ variable and \begin{align*}
     f(0,r,\theta)=0 \quad \mbox{if} \quad m=0,1.
\end{align*}
Moreover for $m\ge 2$, $v$ can be chosen as follows:
 \begin{align}\label{existence_of_v}
        v_{i_1\cdots i_p 1\cdots 1}&=  \binom{m-2}{m-p-2}^{-1}\left( \sum\limits_{l=1}^{[\frac{p+2}{2}]}(-1)^{l-1}\, i_{\d}^{l-1} \tilde{f}^{p+2-2l}\right), 
\end{align}
where $\tilde{f}^{p+2-2l} = \binom{m}{p+2-2l} f_{i_1\cdots i_{p+2-2l}1\cdots 1} $ a symmetric  tensor field of order $p+2-2l$.
\end{lemma}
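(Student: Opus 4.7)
The plan is to exploit the null-vector structure of $\omega := e_1 + \I e_r$, translate the hypothesis into a generalized momentum ray transform condition, and then construct $v$ explicitly. First observe that $\omega\cdot\omega = |e_1|^2 - |e_r|^2 + 2\I\, e_1\cdot e_r = 1-1+0 = 0$. This immediately gives the easy direction: if $f = i_{\delta} v$ for some symmetric $(m-2)$-tensor $v$, then by symmetry of the $\omega$-product,
\[
f_{i_1\cdots i_m}\,\omega^{i_1}\cdots\omega^{i_m} = (\omega\cdot\omega)\, v_{j_1\cdots j_{m-2}}\omega^{j_1}\cdots\omega^{j_{m-2}} = 0
\]
identically in $r$, so the moment conditions are trivially satisfied. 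The content of the lemma is the converse, together with the explicit inversion formula for $v$.

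Next, expanding the contraction according to how many of the indices equal $1$,
\[
f_{i_1\cdots i_m}\prod_{j=1}^{m}(e_1+\I e_r)^{i_j} = \sum_{s=0}^{m}\binom{m}{s}\I^{s}\, f_{\underbrace{1\cdots 1}_{m-s}\,j_1\cdots j_s}(0,r,\theta)\,\theta^{j_1}\cdots\theta^{j_s},
\]
so that the hypothesis becomes the vanishing along the line $\{r\theta:r\in\R\}$ of a linear combination (with coefficients $\binom{m}{s}\I^{s}$) of the $\alpha$-th momentum ray transforms of the symmetric $s$-tensor fields $\tilde{f}^{s}$ on $\{x_1=0\}$, for every $0\le\alpha\le m$ and almost every $\theta\in\Sb^{n-2}$. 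This is exactly the generalized MRT setup of Section~\ref{Sec_MRT}. For $m=0,1$ the system reduces to the ordinary MRT of a single tensor of order at most one, and standard MRT injectivity together with the compact support hypothesis (which eliminates the non-trivial kernel on the unit sphere bundle) forces $f(0,\cdot)=0$.

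For $m\ge 2$ the strategy is to verify, by induction on $p=0,1,\ldots,m-2$, that the $v$ given by the stated formula satisfies $(i_{\delta}v)_{i_1\cdots i_m}=f_{i_1\cdots i_m}$ for every index pattern. At step $p$, I would isolate specific tensorial components by varying $\theta$ suitably and combining the resulting moment identities for $\alpha = 0, 1, \ldots, m$, which yields a pointwise linear relation among the tensors $\tilde{f}^{p},\tilde{f}^{p-2},\ldots$; then recursively substitute the already-determined lower components $v_{i_1\cdots i_{p-2}1\cdots 1}$ to solve for $v_{i_1\cdots i_p 1\cdots 1}$. The alternating signs $(-1)^{l-1}$ arise from the iterated elimination of $i_{\delta}^{l-1}$-images, and the prefactor $\binom{m-2}{m-p-2}^{-1}$ absorbs the symmetrization multiplicity in the definition of $i_{\delta}$.

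The main obstacle will be the combinatorial bookkeeping: the symmetrization $\sigma$ appearing in $i_{\delta}$ produces many cross terms of the form ``$\delta_{i_k i_l}$ times a trace of $f$'', and the verification amounts to showing that the alternating sum in the formula for $v$ telescopes, leaving precisely $f_{i_1\cdots i_m}$. I expect this to reduce, after careful index manipulation, to iterated applications of a single binomial identity which is exactly what fixes the coefficients in the stated formula.
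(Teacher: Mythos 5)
Your setup is right as far as it goes --- the null identity $(e_1+\I e_r)\cdot(e_1+\I e_r)=0$, the expansion by the number of indices equal to $1$, and the recognition that the hypothesis is a generalized MRT condition on the unit sphere bundle. But the core of your argument, the induction on $p$ in which you claim to ``isolate specific tensorial components by varying $\theta$'' and thereby obtain ``a pointwise linear relation among the tensors $\tilde f^{p},\tilde f^{p-2},\dots$'', is the step that does not work as described, and it is exactly where the real content of the lemma lies. For a fixed direction the contraction mixes all the $\tilde f^{s}$ ($0\le s\le m$) along the same line, and finitely many moments $\A=0,\dots,m$ do not let you separate them pointwise; the transform genuinely has a kernel here (that is why the conclusion is only $f=i_{\d}v$ and not $f=0$). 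What is needed, and what the paper supplies, is: (i) the orthogonal decomposition $f=g+i_{\d}v$ with $j_{\d}g=0$ (Dairbekov--Sharafutdinov), after which the hypothesis constrains only the trace-free part $g$; (ii) the injectivity machinery of Section \ref{Sec_MRT} --- splitting into even and odd parity via $\xi\mapsto-\xi$, extending from the sphere bundle to the full tangent bundle by homogeneity, and invoking Theorem \ref{uniquenes_result_mrt} --- which yields Theorem \ref{Th_MRT_unit_disk} and expresses the top components $\tilde g^{2k}$, $\tilde g^{2k-1}$ as alternating sums of $i_{\d}$-images of the lower ones; and (iii) a positivity argument: the trace condition $j_{\d}g=0$ gives $j_{\d}^{l}\tilde g^{2k}=(-1)^{l}d_{2k-2l}\tilde g^{2k-2l}$ with $d_{2k-2l}>0$, whence
\begin{equation*}
\langle \tilde g^{2k},\tilde g^{2k}\rangle+\sum_{l=1}^{k}d_{2k-2l}\langle \tilde g^{2k-2l},\tilde g^{2k-2l}\rangle=0 ,
\end{equation*}
forcing every $\tilde g^{s}=0$ and hence $g=0$. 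None of these three ingredients appears in your proposal, and without (iii) in particular there is no mechanism that kills the trace-free part.

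A secondary inaccuracy: you assert that verifying the explicit formula for $v$ ``leaves precisely $f_{i_1\cdots i_m}$'' after telescoping. The purely algebraic telescoping only recovers the components $f_{i_1\cdots i_j1\cdots 1}$ with at least two trailing indices equal to $1$ (i.e.\ $j\le m-2$); the components with at most one index equal to $1$ are exactly the ones governed by the relations coming from $g=0$, i.e.\ by the analytic hypothesis. So the explicit formula cannot be verified by combinatorics alone --- it presupposes the vanishing of $g$ established in steps (i)--(iii).
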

\begin{proof}
Let $f$ be a symmetric $m$ tensor field, $m\geq 2$. Define two operators $i_{\d}$  and $j_{\d}$ as follows \cite{Sharafutdinov_book}:  
\begin{align}
 (i_{\d} f)_{i_1\cdots i_{m+2}}&=  \sigma(i_1\dots i_{m+2}) f_{i_1\dots i_{m}}\d_{i_{m+1}i_{m+2}}, \label{definition_i}\\
 (j_{\d}f)_{i_1\cdots i_{m-2}}&= \sum\limits_{k=1}^n f_{i_1\cdots i_{m-2} kk} \label{definition_j}.
\end{align}
The operators $i_{\d}$ and $j_{\d}$ are dual to each other with respect to the standard inner product on symmetric tensors. From \cite[Lemma 2.3] {Dairbekov-Sharafutdinov}, we have the following orthogonal decomposition for $f$ as 
$f =g +i_{\d} v$ with $j_{\d} g=0$. 
Note that since $f$ is compactly supported, $v$ is compactly supported as well. 
Note that $i_{\delta} v(0,r,\theta)$ lies in the kernel of \eqref{intergral_identities}. Using the above decomposition, we rewrite \eqref{intergral_identities} as 
\begin{align*}
\int_{\Rb} r^{\A}\, \left(g_{i_1\dots i_{m}}(0,r,\theta) \prod_{j=1}^{m}(e_1+\I e_r)_{i_{j}}\right)\, \D r&=0 \quad \mbox{for a.e. }\theta \in \Sb^{n-2},
\end{align*}
and for $0\le \alpha \le m$. Rewriting the above equation, we have,
\begin{align}\label{main_identity}
    \int_{\Rb} r^{\A}\, g(0,r,\theta) \, \D r& = 0 \quad \mbox{for} \quad 0\le \alpha \le m,
\end{align}
where we denote $g$ to be the sum of symmetric tensors in $i_j=2,\cdots,n$ indices for $j=1,\dots,m$ as follows: 
\begin{equation}\label{MRT_1}
g= \sum_{p=0}^{m} \I^p \tilde{g}^{p}_{i_1\cdots i_p}\, (e_r)_{i_1}\cdots (e_r)_{i_p}\quad
\mbox{with}\quad 
\tilde{g}^{p}_{i_1 \cdots i_p} =  c_p
g_{i_1\cdots i_{p} 1 \cdots 1},
\end{equation}
where $ c_p= \binom{m}{p}$.
Let $m$ be even with $m=2k$.  The odd case follows by  a very similar argument. Using Theorem \eqref{Th_MRT_unit_disk} we have that (denoting $x'=(r,\theta)$)
\begin{align}\label{even_and_odd_highest_component}
   \tilde{g}^{2k} (0,x') 
   =\left( \sum\limits_{l=1}^{k} i^{l}_{\d} (-1)^{l+1}\tilde{g}^{2k-2l}(0,x')\right),\quad  \tilde{g}^{2k-1}(0,x')= \left( \sum\limits_{l=1}^{k-1} i^{l}_{\d} (-1)^{l+1}\tilde{g}^{2k-2l-1}(0,x')\right)
\end{align}
Since  $j_{\d} g=0 $, we see
\begin{align*}
    (j_{\d} \tilde{g}^{2k})_{i_1\cdots i_{2k-2}} 
    = \sum_{l=2}^{n}  \tilde{g}^{2p}_{i_1\cdots i_{2k-2} ll} 
    = - \frac{1}{c_{2k-2}} \tilde{g}^{2k}_{i_1\cdots i_{2k-2} 1 1}  = - \frac{1}{c_{2k-2}} \tilde{g}^{2k-2}_{i_1\cdots i_{2k-2}}.
\end{align*}
Note that, since $\tilde{g}^{2k}$ is a tensor in $i_j=2,\dots,n$ indices for $1\leq j \leq 2k$, therefore, the sum in the definition of $j_{\d}$ is over $2$ to $n$.
Continuing in this way we obtain
\begin{align*}
     j^{l}_{\d}\tilde{g}^{2k}= \frac{(-1)^l}{c_{2k-2l}} \tilde{g}^{2k-2l}= (-1)^l\, d_{2k-2l}\,\tilde{g}^{2k-2l},
\end{align*}
where $d_{2k-2l} = \frac{1}{c_{2k-2l}}>0 $ for $ 0\le l \le k$.
This implies
\begin{align*}
    \langle \tilde{g}^{2k}, \tilde{g}^{2k}\rangle
    &= \left\langle \tilde{g}^{2k},\left( \sum\limits_{l=1}^{k} i^{l}_{\d} (-1)^{l+1}\tilde{g}^{2k-2l}(x')\right) \right \rangle\\
    &= \sum\limits_{l=1}^{k} \left\langle j^{l}_{\d} \tilde{g}^{2k}, (-1)^{l+1}\tilde{g}^{2k-2l} \right \rangle.\\
    &= \sum\limits_{l=1}^{k} \left\langle  d_{2k-2l}\, (-1)^l  \tilde{g}^{2k-2l}, (-1)^{l+1}\tilde{g}^{2k-2l} \right \rangle\\
    &= - \sum\limits_{l=1}^{k}d_{2k-2l}  \left\langle \tilde{g}^{2k-2l}, \tilde{g}^{2k-2l} \right \rangle.
\end{align*}
This implies
\begin{align}\label{plancheral_type_relation}
     \langle \tilde{g}^{2k}, \tilde{g}^{2k}\rangle +  \sum\limits_{l=1}^{k}d_{2k-2l}  \left\langle \tilde{g}^{2k-2l}, \tilde{g}^{2k-2l} \right \rangle=0.
\end{align}
    This gives $\tilde{g}^{2l}=0 $ for $0\le l \le k$. Using a  similar analysis we can show that $ \tilde{g}^{2l-1}=0 $ for $1\le l \le k$. This implies $ g_{i_1 \cdots i_m}(0,x')=0$ for $ 1\le i_1,\cdots,i_m \le n$. Hence we have $f=i_\delta v$ for a compactly supported $m-2$ tensor field $v$ concluding the proof.
    
    We now give an explicit construction of $v$ by expressing it in terms of $f$ as follows:
    \begin{align}\label{def_v}
        v_{i_1\cdots i_p 1\cdots 1}&=  \binom{m-2}{m-p-2}^{-1}\left( \sum\limits_{l=1}^{[\frac{p+2}{2}]}(-1)^{l-1}\, i_{\d}^{l-1} \tilde{f}^{p+2-2l}\right), 
\end{align}
for $0\le p \le m-2$ and $2\le i_1,\cdots ,i_{p} \le n$. In the above definition of $v$, $1$ appears $m-p-2$ times. We now show that $ f(0,x') = i_{\delta} v(0,x')$. 
   Assume that $ i_{j+1}, \cdots, i_m =1$, $ 2 \le i_{1},\cdots, i_j \le n$ and $m-j\ge 2.$ Then by definition we can write 
$(i_{\delta}v)_{i_1\cdots i_{j}1\cdots 1}$ as 
\begin{align*}
(i_{\delta}v)_{i_1\cdots i_{j}1\cdots 1}
&= \binom{m}{2}^{-1}\left(\binom{m-j}{2} v_{i_1\cdots i_{j}1\cdots 1}\delta_{11}
+ \binom{j}{2} \sigma(i_1\cdots i_{j}) v_{i_1\cdots i_{j-2}1\cdots 1} \otimes \delta_{i_{j-1}i_{j}} \right).
\end{align*}
    This together with \eqref{def_v} gives
\begin{align*}
   \binom{m}{2} (i_{\delta}v)_{i_1\cdots i_{j}1\cdots 1}=&  \binom{m-j}{2}  \binom{m-2}{m-j-2}^{-1}\left( \sum\limits_{l=1}^{[\frac{j+2}{2}]}(-1)^{l-1}\, i_{\d}^{l-1} \tilde{f}^{j+2-2l}\right)  \\
   &+ \binom{j}{2} \binom{m-2}{m-j}^{-1} i_{\d}\left( \sum\limits_{l=1}^{[\frac{j}{2}]}(-1)^{l-1}\, i_{\d}^{l-1}  \tilde{f}^{j-2l}\right).
\end{align*}
Observe that $ \binom{m-j}{2}  \binom{m-2}{m-j-2}^{-1}= \binom{m-2}{m-j}^{-1} \binom{j}{2} = \binom{m}{2}\binom{m}{j}^{-1}$. This implies
\begin{align*}
     \binom{m}{2} (i_{\delta}v)_{i_1\cdots i_{j}1\cdots 1}=& \binom{m}{2}\binom{m}{j}^{-1} \left( \sum\limits_{l=1}^{[\frac{j+2}{2}]}(-1)^{l-1}\, i_{\d}^{l-1} \tilde{f}^{j+2-2l}  \right)
     +  \binom{m}{2}\binom{m}{j}^{-1}\left( \sum\limits_{l=1}^{[\frac{j}{2}]}(-1)^{l-1}\,i_{\d}^{l}  \tilde{f}^{j-2l}\right)\\
     =& \binom{m}{2}\binom{m}{j}^{-1} \left( \sum\limits_{l=0}^{[\frac{j}{2}]}(-1)^{l}\, i_{\d}^{l} \tilde{f}^{j-2l}  \right)
     +  \binom{m}{2}\binom{m}{j}^{-1}\left( \sum\limits_{l=1}^{[\frac{j}{2}]}(-1)^{l-1}\, i_{\d}^{l}  \tilde{f}^{j-2l}\right)\\
     =&\binom{m}{2}\binom{m}{j}^{-1} \tilde{f}^{j}.
\end{align*}
    This implies 
    \begin{align}
         (i_{\delta}v)_{i_1\cdots i_{j}1\cdots 1}
         =f_{i_1\cdots i_{j}1\cdots 1}, \quad 0\le j\le m-2.
    \end{align}
    \end{proof}

\smallskip

\begin{remark}\label{highest_MRT}
Using Theorem \ref{mrt_coro} we can simplify the choices of the amplitudes $a_0$ and $b_0$ in \eqref{Step0-Eq0}. We can take
\begin{equation}\label{amplitudes_1}
\begin{aligned}
a_0(x) = h(z)(z-\overline{z})^{(2-n)/2} \quad \mbox{and}\quad 
\overline{b_0(x}) = g(\theta)(z-\overline{z})^{(2m-n)/2},
\end{aligned}
\end{equation}
where $g$ and $h$ are as before.
Using these choices we get
\begin{equation*}
\int_{\Rb} r^{m-1}e^{\lambda r}\left(\widehat{W}^{m-1}_{i_1\cdots i_{m-1}}(\lambda,r,\theta) \prod_{j=1}^{m-1}(e_1+\I e_r)_{i_{j}}\right) \D r = 0, \quad \mbox{for a.e. }\theta \in \Sb^{n-2},\quad \forall \lambda \in \Rb,
\end{equation*}
in place of \eqref{Step0-Eq4}. Then we use Theorem \ref{mrt_coro} along with the fact that $W^{m-1}$ is compactly supported in the $x_1$ variable to show that  $W^{m-1}=0$ in $\Omega$ and subsequently $A^{m-1} = \widetilde{A}^{m-1}$ in $\Omega$. Then using a similar iteration argument as before, one can obtain $A^k=\widetilde{A}^k$ in $\Omega$, for all $k=1,\dots,m$ and $q=\widetilde{q}$.
\end{remark}

\section{Momentum ray transforms}\label{Sec_MRT}
In this section we prove a uniqueness result for the generalized MRT of tensor fields. We first start with an injectivity result for this transform on the tangent bundle on $\Rb^n$  and use this result to analyze the  same transform on the unit sphere bundle on $\Rb^n$. The latter result  is required to prove the main result of this paper.

 Let
$ S^m=S^m(C^{\infty}_{c}(\mathbb{R}^n)) $ denote the space of smooth compactly supported symmetric $ m $-tensor fields in $ \R^n $. An element $ F\in \mathbf{S}^m=S^0\oplus  S^1\oplus\cdots\oplus S^m $ can be written uniquely as
\begin{equation}\label{definition_of_F_m}
\begin{aligned}
	F &=\sum_{p=0}^{m}f^{(p)}=f^{(0)}_{i_0}+f^{(1)}_{i_1}\,\D x^{i_1} + \cdots+f^{(p)}_{i_1\cdots i_p}\D x^{i_1}\cdots \D x^{i_p}+\cdots+f^{(m)}_{i_1\cdots i_m}\D x^{i_1}\cdots \D x^{i_m}\\
    &	=\left( f^{(0)}_{i_0},f^{(1)}_{i_1},\cdots, f^{(m)}_{i_1\cdots i_m} \right)
\end{aligned}
\end{equation} which can be viewed as sum of a function, a vector field and up to a symmetric $m$-tensor field with $ f^{(p)}\in S^p$ for each $0\leq p\leq m$. Initially, we define MRT of $ F\in \mathbf{S}^m $, for  all $ (x,\xi) \in \R^n \times \R^n\setminus \{0\}$ and for all integers $ k\ge0$  as follows: 
\begin{equation}
	\begin{aligned}\label{Eq4.1}
		I^{m,k}F(x,\xi) &=\sum_{p=0}^{m}I^k\!f^{(p)}(x,\xi)\\
		&= \int_{-\infty}^{\infty} t^k \left(f^{(0)}_{i_0}(x+t\xi
		)+ f^{(1)}_{i_1}(x+t\xi)\, \xi^{i_1}+\cdots+ f^{(m)}_{i_1\cdots i_m}(x+t\xi)\, \xi^{i_1}\,\cdots \xi^{i_m}\right)\D t.
	\end{aligned}
\end{equation}

\begin{remark}
	For $ F= f^{(m)}\in S^m $, \eqref{Eq4.1} reduces to 
	\begin{equation}\label{Eq4.2}
		I^k f^{m}(x,\xi) = \int_{-\infty}^{\infty} t^k  f^{(m)}_{i_1\cdots i_m}(x+t\xi)\, \xi^{i_1}\,\cdots \xi^{i_m}\, \D t.
	\end{equation}
	This coincides with the classical definition of MRT introduced by Sharafutdinov in \cite{Sharafutdinov_1986_momentum,Sharafutdinov_book} and later studied in greater detail in \cite{KMSS,KMSS_range}.
\end{remark}

\subsection{Adjoint of MRT}
The tensor fields $F$ we are interested in recovering in Theorem \ref{mainresult_1} belongs to $C^{\infty}(\bar{\Omega})$. Extending by $0$ outside $\Omega$, we have that $F\in L^{\infty}(\Rb^n)$ with compact support. 
Note that, MRT above in \eqref{Eq4.1}) was defined for  smooth compactly supported symmetric tensor fields.
We now extend the definition of MRT to compactly supported distributions. To do so, let us first introduce the notion of $L^2$ adjoint of MRT and study MRT on compactly supported distributions.

Let $ \mathcal{E}'(S^m) $ be the space of compactly supported symmetric $m$-tensor field distributions and $\Dc'(\Rn \times (\Rn\setminus\{0\})) $ be the space of distributions. 
We first introduce the adjoint of MRT for $ f\in \Ec'(S^m)$. Then using this we will define the notion of adjoint of MRT for $F\in \Ec'(\mathbf{S}^m)$.

\begin{definition}\label{defintion_of_mrt_for_f}
	The momentum ray transforms \[ I^k: \mathcal{E}'(S^m)\rightarrow \mathcal{D}'(\Rn \times (\Rn\setminus\{0\}))  \] are linear operators, for every non-negative integer $k$, defined as follows:
	\[ \langle I^kf^{(m)},\psi \rangle = \langle f^{(m)}, (I^k)^*\psi \rangle \quad \mbox{for }\quad  \psi \in C_c^{\infty}(\Rn \times (\Rn\setminus\{0\})).\]

	Here for all indices $1\le i_1,\cdots,i_m\le n$, the components of  $ (I^k)^* \psi$ are given by 
	\begin{equation}\label{def:adjoint}
		[(I^k)^* \psi]_{i_1\cdots i_m}  
		= \int_{\Rn}\int_{\mathbb{R}}t^k\, \, \xi^{i_1}\cdots \xi^{i_m}\, \psi(x-t\xi,\xi) \D t\,\D \xi.
	\end{equation}
	
\end{definition}

\begin{remark}
	For $ k=0 $ we get \[ [(I^0)^* \psi]_{i_1\cdots i_m} =  \int_{\Rn}\int_{\mathbb{R}} \, \xi^{i_1}\cdots \xi^{i_m}\, \psi(x-t\xi,\xi) \D t\,\D \xi, \] which is the same as \cite[Eq. 2.5.3]{Sharafutdinov_book}.
\end{remark} 

Now we are ready to define the MRT for $F\in \mathcal{E}'(\mathbf{S}^m)$. To avoid notational confusion, starting from here, throughout the rest of this section, we use the $\psi$  for test functions to define the adjoint of $I^kf$ for  $f\in \Ec'(S^m)$ and $\Psi$ to define the adjoint of $I^{m,k} F$ for  $F\in \Ec'(\mathbf{S}^m)$. 
\begin{definition}\label{defintion_of_mrt_for_F} The momentum ray transforms $ I^{m,k}:\mathcal{E}'(\mathbf{S}^m) \rightarrow \mathcal{D}'(\Rn \times \Rn\setminus\{0\}) $ are defined as:
	\begin{equation}\label{def:mrt_of_F}
		\begin{aligned}
			\langle I^{m,k}F,\Psi\rangle &= \langle F,(I^{m,k})^*\Psi\rangle = \sum\limits_{p=0}^{m}\langle f^{(p)}, \lb I^{m,k}\rb^{*}_{p} \Psi\rangle \quad \Psi \in C_c^{\infty}(\Rn\times \Rn\setminus\{0\}), 
		\end{aligned}
	\end{equation}
	where  $ (I^{m,k})^*\Psi  $ is given by
	\begin{align}\label{adjoint_expression}
		((I^{m,k})^*\Psi)(x)= \lb \int_{\Rn}\int_{\mathbb{R}}t^k\, \, \Psi(x-t\xi,\xi) \D t\,\D \xi,\cdots, \int_{\Rn}\int_{\mathbb{R}}t^k\, \, \xi^{i_1}\cdots \xi^{i_m}\, \Psi(x-t\xi,\xi) \D t\,\D \xi\rb,
	\end{align}
    	which is an element of  $\oplus_{p=0}^{m} C^{\infty}(S^p)$ and $(I^{m,k})^{*}_{p} \Psi= \int_{\Rn}\int_{\mathbb{R}}t^k\, \, \xi^{i_1}\cdots \xi^{i_p}\, \Psi(x-t\xi,\xi) \D t\,\D \xi $.
    	
\end{definition}


\subsection{Injectivity result for MRT on the tangent bundle}
Now we state an injectivity result for MRT.
	\begin{theorem}\label{uniquenes_result_mrt}
		Suppose $ F \in \Ec'(\mathbf{S}^m)$. If  
		\begin{align*}
		     I^{m,k}F=0  \quad \mbox{for}\quad  k=0,1,\cdots, m  \quad \mbox{then}\quad F=0.
		\end{align*}
	\end{theorem}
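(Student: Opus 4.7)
The plan is to decouple the vanishing of the combined transform $I^{m,k}F$ into separate vanishing statements for each rank-$p$ component $I^k\! f^{(p)}$, exploiting the fact that these summands carry pairwise distinct degrees of homogeneity in the $\xi$ variable. A direct change of variables $t\mapsto t/\lambda$ in the defining integral \eqref{Eq4.1} shows that for every $0\leq p\leq m$ and every $\lambda>0$,
\[
I^k\! f^{(p)}(x,\lambda\xi)\;=\;\lambda^{\,p-k-1}\,I^k\! f^{(p)}(x,\xi),
\]
so $I^k\! f^{(p)}(x,\cdot)$ is positively homogeneous of degree $p-k-1$ in $\xi$. Consequently
\[
I^{m,k}F(x,\lambda\xi)\;=\;\sum_{p=0}^{m}\lambda^{\,p-k-1}\,I^k\! f^{(p)}(x,\xi),
\]
and since the exponents $p-k-1$ are pairwise distinct as $p$ ranges over $\{0,1,\ldots,m\}$, the hypothesis $I^{m,k}F=0$ forces each summand $I^k\! f^{(p)}$ to vanish individually.

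Collecting these conclusions over $k=0,1,\ldots,m$, I obtain, for every fixed $p\in\{0,1,\ldots,m\}$, the family of identities $I^k\! f^{(p)}=0$ for $k=0,1,\ldots,p$ (indeed for $k$ up to $m$, which is more than sufficient). This is precisely the data required by the classical injectivity result for the momentum ray transform of a symmetric $p$-tensor field in $\Rn$ developed in \cite{Sharafutdinov_book,KMSS,KMSS_range,Rohit_Suman}. Applying that inversion rank by rank yields $f^{(p)}=0$ for every $0\leq p\leq m$, which is exactly $F=0$.

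The one detail requiring care is that, because $F\in\Ec'(\Sm^m)$, the homogeneity splitting must be interpreted distributionally. This is done by pairing the identity for $I^{m,k}F(x,\lambda\xi)$ against test functions $\Psi\in C_c^\infty(\Rn\times(\Rn\setminus\{0\}))$, expressing $\Psi$ in polar form in $\xi$, and invoking the standard fact that distributions on $\Rn\setminus\{0\}$ that are positively homogeneous of pairwise distinct degrees are linearly independent. I do not anticipate a serious obstacle in this lemma; the genuine novelty of the paper's MRT results, and the main difficulty the authors flag in the introduction, appears only in the companion statement restricted to the unit sphere bundle, where the $\xi$-homogeneity used above is no longer available and an iterative recovery scheme (of the type carried out in Section \ref{Section_coeff_deter}) is required.
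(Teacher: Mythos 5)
Your proof is correct, but it follows a genuinely different route from the paper's. You decouple the sum $I^{m,k}F=\sum_{p}I^{k}f^{(p)}$ using the fact that $I^{k}f^{(p)}(x,\cdot)$ is positively homogeneous of degree $p-k-1$ in $\xi$, so that the distinct degrees force each summand to vanish, and you then invoke rank-by-rank injectivity of the classical MRT. The paper instead argues by induction on $m$ via the identity $I^{m-1,k}(F)_{i_s}=\PD_{\xi^{i_s}}I^{m,k}F-\PD_{x^{i_s}}I^{m,k+1}F$ (Lemma \ref{derivative_of_IkF}), which lowers the rank of every component simultaneously and reduces the whole theorem to the scalar X-ray transform ($m=0$); notably, the paper treats single-tensor MRT injectivity as a \emph{corollary} of Theorem \ref{uniquenes_result_mrt} (Remark \ref{mrt_uniq_for_single_tensor}) rather than as an input, so its argument is self-contained. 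Your decoupling step is exactly the homogeneity relation the paper later proves as \eqref{Eq4.10}--\eqref{homeogenity} and exploits in Lemma \ref{Lm1.2} and (in the weaker $\xi\mapsto-\xi$ parity form) in Theorem \ref{Th_MRT_unit_disk}, so nothing in your argument is foreign to the paper's framework; your distributional justification via pairing against test functions and noting that $\lambda\mapsto\langle\tau_{\lambda}I^{m,k}F,\Psi\rangle$ is $\lambda^{-k-1}$ times a polynomial vanishing on an interval is sound. The one external input you rely on --- injectivity of $f^{(p)}\mapsto(I^{0}f^{(p)},\dots,I^{p}f^{(p)})$ for $f^{(p)}\in\Ec'(S^{p})$ --- is stated in \cite{KMSS,Sharafutdinov_book} for smooth compactly supported fields; the extension to compactly supported distributions is routine (since $I^{k}$ commutes with convolution in $x$, mollify and pass to the limit), but you should say so explicitly to avoid the appearance of circularity with Remark \ref{mrt_uniq_for_single_tensor}. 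What your approach buys is transparency: it makes clear that on the full tangent bundle the theorem is soft precisely because the homogeneity degrees separate the ranks, and that the real difficulty arises only on the unit sphere bundle where this separation degenerates to a parity argument --- exactly the point the paper emphasizes.
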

\begin{remark}\label{mrt_uniq_for_single_tensor}
   We note that if $f^{(m)}\in \Ec'(S^m)  $ and  $I^kf^{(m)}=0$ for $k=0,1,\cdots,m$, then as a trivial consequence of Theorem \ref{uniquenes_result_mrt}, we obtain $f^{(m)}=0$.
   \end{remark}
	The proof of this result will be presented after the following lemma.  
\begin{lemma}\label{derivative_of_IkF}
Let $F\in \Ec'(\mathbf{S}^m)$.  Fix an index $i_s$ with $1\leq i_s\leq n$. Then for  any $k$, $0\le k\le m-1$,  the following identity holds
	\begin{equation}\label{key_identity_F_m}
		I^{m-1,k}(F)_{i_s}=  \PD_{\xi^{i_s}} I^{m,k}F- \PD_{x^{i_s}}I^{m,k+1}F
	\end{equation}
	in the sense of distributions. Here $ (F)_{i_s}= \sum_{p=0}^{m-1}(p+1)\, f^{(p+1)}_{i_1\cdots i_{p}i_s}\,  \D x^{i_1} \cdots \D x^{i_{p}} \in \Ec'(\mathbf{S}^{m-1})$.
\end{lemma}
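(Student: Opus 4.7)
\textbf{Proof proposal for Lemma \ref{derivative_of_IkF}.} My plan is to first establish the identity pointwise for smooth compactly supported $F\in\oplus_{p=0}^{m} C^{\infty}_c(S^p)$, by direct differentiation under the integral sign, and then pass to distributions by a mollification/density argument (or equivalently by transposing onto test functions via Definition \ref{defintion_of_mrt_for_F}). The essential content is a single integration-by-parts/chain-rule calculation; the rest is bookkeeping.

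First I would take smooth $F=\sum_{p=0}^m f^{(p)}$ and compute $\PD_{\xi^{\ell}} I^{m,k}F$ (writing $\ell:=i_s$ to emphasize that we differentiate in one \emph{fixed} slot, to avoid collision with the summation indices $i_1,\dots,i_p$ in \eqref{Eq4.1}). Differentiating under the integral gives two contributions: the chain-rule term on $f^{(p)}(x+t\xi)$, which produces $t\,(\PD_{x^{\ell}}f^{(p)})(x+t\xi)\,\xi^{i_1}\cdots\xi^{i_p}$, and the term from $\PD_{\xi^{\ell}}(\xi^{i_1}\cdots\xi^{i_p})=\sum_{j=1}^{p}\delta^{\,i_j}_{\ell}\,\xi^{i_1}\cdots\widehat{\xi^{i_j}}\cdots\xi^{i_p}$. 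Contracting the latter with the symmetric tensor $f^{(p)}_{i_1\cdots i_p}$ collapses by symmetry to $p\,f^{(p)}_{i_1\cdots i_{p-1}\ell}\,\xi^{i_1}\cdots\xi^{i_{p-1}}$. The first contribution, after absorbing the extra $t$ into $t^k\mapsto t^{k+1}$ and pulling $\PD_{x^{\ell}}$ outside the integral, is exactly $\PD_{x^{\ell}}I^{m,k+1}F$. The second contribution, after relabelling $p\mapsto p'+1$, is $\sum_{p'=0}^{m-1}(p'+1)\int t^k f^{(p'+1)}_{i_1\cdots i_{p'}\ell}(x+t\xi)\,\xi^{i_1}\cdots\xi^{i_{p'}}\,\D t$, which by the definition of $(F)_{\ell}$ in the statement equals $I^{m-1,k}(F)_{\ell}$. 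Rearranging yields \eqref{key_identity_F_m} pointwise on $\Rb^n\times(\Rb^n\smo)$. Here the hypothesis $0\le k\le m-1$ is used implicitly (so that on the right-hand side the shifted transform $I^{m,k+1}$ is still among the MRTs we are working with); the identity itself however is valid for all $k\ge 0$.

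Next I would extend to $F\in\Ec'(\mathbf{S}^m)$. The cleanest route is a mollification: let $\rho_{\epsilon}$ be a standard mollifier, set $F_{\epsilon}:=F*\rho_{\epsilon}\in\oplus_{p=0}^m C^{\infty}_c(S^p)$, apply the smooth identity to $F_{\epsilon}$, and pass $\epsilon\to 0$. Since $I^{m,k}$ and the operation $F\mapsto(F)_{\ell}$ both commute with convolution by $\rho_{\epsilon}$ in the $x$ variable, and since all four distributions in \eqref{key_identity_F_m} depend continuously on $F$ in $\Ec'$ (the mappings $F\mapsto I^{m,k}F$ and $F\mapsto I^{m,k+1}F$ are continuous $\Ec'(\mathbf{S}^m)\to\Dc'(\Rn\times(\Rn\smo))$ by Definition \ref{defintion_of_mrt_for_F}, and $\PD_{\xi^{\ell}},\PD_{x^{\ell}}$ are continuous on $\Dc'$), the identity passes to the limit. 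Alternatively, one can prove it directly on the dual side: for $\Psi\in C^{\infty}_c(\Rn\times(\Rn\smo))$ the identity is equivalent to checking, component by component in $p$, that
\[
[(I^{m-1,k})^{*}\Psi]_{i_1\cdots i_{p-1}}
= \frac{1}{p}\Big([(I^{m,k})^{*}(\PD_{\xi^{\ell}}\Psi)]_{i_1\cdots i_{p-1}\ell} - [(I^{m,k+1})^{*}(\PD_{x^{\ell}}\Psi)]_{i_1\cdots i_{p-1}\ell}\Big),
\]
after relabelling; this is a line of integration by parts in $t,\xi$ using \eqref{adjoint_expression}.

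The only delicate point is the second step: one must be careful that the $\xi$-derivative of $\Psi$ keeps compact support away from $\xi=0$ (which is automatic since $\PD_{\xi^{\ell}}$ preserves $C^{\infty}_c(\Rn\times(\Rn\smo))$), and that the convolution argument respects the compact support of $F$ (which holds trivially). Beyond these minor technicalities, the result is, essentially, the formal identity above; I do not anticipate any real obstacle.
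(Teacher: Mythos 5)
Your proposal is correct, and its core computation --- the product/chain rule applied to $t^k f^{(p)}(x+t\xi)\,\xi^{i_1}\cdots\xi^{i_p}$, with the symmetry of $f^{(p)}$ collapsing the $\xi$-derivative of the monomial to $p\,f^{(p)}_{i_1\cdots i_{p-1}i_s}\xi^{i_1}\cdots\xi^{i_{p-1}}$ and the extra factor of $t$ producing the shift $k\mapsto k+1$ --- is exactly the computation in the paper. The difference is where you perform it: you establish the identity pointwise for smooth compactly supported $F$ and then pass to $\Ec'(\mathbf{S}^m)$ by mollification and continuity, whereas the paper works on the dual side from the start, expanding $(I^k)^*(\PD_{\xi^{i_s}}\psi)$ via $(\PD_{\xi^{i_s}}\psi)(x-t\xi,\xi)=\PD_{\xi^{i_s}}\left[\psi(x-t\xi,\xi)\right]+t(\PD_{x^{i_s}}\psi)(x-t\xi,\xi)$ and using that the total $\xi$-derivative integrates to zero over $\xi$. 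Your "alternative" dual-side sketch is therefore essentially the paper's proof. The paper's route avoids the (routine but not free) verifications your primary route needs --- that $I^{m,k}$ commutes with mollification in $x$ and that $F\mapsto I^{m,k}F$ is continuous from $\Ec'$ to $\Dc'$ --- while your route has the advantage that the identity for smooth fields is transparent and the distributional statement then follows by soft arguments. One small slip: the signs in your displayed adjoint identity are reversed; consistent with $\langle\PD_{\xi^{\ell}}I^{m,k}F,\Psi\rangle=-\langle F,(I^{m,k})^{*}\PD_{\xi^{\ell}}\Psi\rangle$, the bracket should be $-[(I^{m,k})^{*}(\PD_{\xi^{\ell}}\Psi)]+[(I^{m,k+1})^{*}(\PD_{x^{\ell}}\Psi)]$ in the appropriate components. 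Since this occurs only in the alternative you sketch, it does not affect the correctness of your main argument.
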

\begin{proof}
	
	For $ f^{(p)}\in \Ec'(S^p) $, we have 
	\begin{align*}
		\langle\PD_{\xi^{i_s}} I^kf^{(p)},\psi \rangle &= - \langle I^kf^{(p)},\PD_{\xi^{i_s}}\psi\rangle, \qquad \psi \in C_c^{\infty}(\Rn\times \Rn\setminus\{0\}) \\
		&= - \langle  f^{(p)},(I^k)^* \left(\PD_{\xi^{i_s}}\psi\right)\rangle.
	\end{align*}
Note that \begin{align*}
(I^k)^* \left(\PD_{\xi^{i_s}}\psi\right)
=& \int_{\Rn}\int_{\mathbb{R}} t^k\, \xi^{i_1}\cdots \xi^{i_p}\, (\PD_{\xi^{i_s}}\psi)(x-t\xi,\xi) \D t\,\D \xi\\
	=& \int_{\Rn}\int_{\mathbb{R}} \,t^k \xi^{i_1}\cdots \xi^{i_p}\, \PD_{\xi^{i_s}} \left(\psi(x-t\xi,\xi) \right)\D t \,\D \xi \\&+ \int_{\Rn}\int_{\mathbb{R}} t^{k+1}\, \xi^{i_1}\cdots \xi^{i_p}\, (\PD_{x^{i_s}}\psi)(x-t\xi,\xi) \D t\,\D \xi\\
=&  \int_{\Rn} \PD_{\xi^{i_s}} \left(\int_{\mathbb{R}} t^k\, \xi^{i_1}\cdots \xi^{i_p}\,  \left(\psi(x-t\xi,\xi) \right)\D t\right)\,\D \xi \\
&-\sum_{j=1}^{p}\int_{\Rn}\int_{\mathbb{R}} t^k\, \,\xi^{i_1}\cdots \xi^{i_{j-1}}\,\delta_{i_s}^{i_j}\,\xi^{i_{j+1}}\,\cdots \xi^{i_{p}}\, \psi(x-t\xi,\xi) \D t\,\D \xi\\&+ \int_{\Rn}\int_{\mathbb{R}} t^{k+1}\,\xi^{i_1}\cdots \xi^{i_p}\, (\PD_{x^{i_s}}\psi)(x-t\xi,\xi) \D t\,\D \xi.
	\end{align*}
Since $\psi\in C_{c}^{\infty}(\Rb^n \times \Rb^n\setminus \{0\})$, we have 
\[
\int_{\Rn} \PD_{\xi^{i_s}} \left(\int_{\mathbb{R}}t^k\, \xi^{i_1}\cdots \xi^{i_p}\,  \left(\psi(x-t\xi,\xi) \right)\D t\right)\,\D \xi=0.
\] This finally gives
	\begin{align*}
		(I^k)^* \left(\PD_{\xi^{i_s}}\psi\right)=& \int_{\Rn}\int_{\mathbb{R}} t^{k+1}\,\xi^{i_1}\cdots \xi^{i_p}\, \xi^{i}\, (\PD_{x^{i_s}}\psi)(x-t\xi,\xi) \D t\,\D \xi\\&-  \sum_{j=1}^{p}\int_{\Rn}\int_{\mathbb{R}} t^k\, \,\xi^{i_1}\cdots \xi^{i_{j-1}}\,\delta_{{i_s}}^{i_j}\,\xi^{i_{j+1}}\,\cdots \xi^{i_{p}} \left(\psi(x-t\xi,\xi) \right)\D t\,\D \xi.
	\end{align*}
	Now
	\begin{align*}
		\langle\PD_{\xi^{i_s}} I^kf^{(p)},\psi\rangle 
		&= - \langle f^{(p)},(I^k)^* \left(\PD_{\xi^{i_s}}\psi\right)\rangle\\
		&= p\,\langle (f^{(p)})_{i_s}, (I^k)^*\psi \rangle - \langle f^{(p)},(I^{k+1})^* \left(\PD_{x^{i_s}}\psi\right)\rangle\\
		&= p\,\langle I^{k}(f^{(p)})_{i_s}, \psi\rangle - \langle I^{k+1}f^{(p)}, \left(\PD_{x^{i_s}}\psi\right)\rangle\\
		&= p\,\langle I^{k}(f^{(p)})_{i_s}, \psi\rangle +\langle \PD_{x^{i_s}}I^{k+1}f^{(p)}, \psi\rangle,
	\end{align*}
	where $ (f^{(p)})_{i_s}= f^{(p)}_{i_1\cdots i_{p-1}i_s}$ is a symmetric $ (p-1) $ tensor field.
	Therefore,
	\begin{equation}\label{key_identity}
		\begin{aligned}
			p\, \langle I^{k}(f^{(p)})_{i_s}, \psi\rangle=  \langle\PD_{\xi^{i_s}} I^kf^{(p)},\psi\rangle-\langle \PD_{x^{i_s}}I^{k+1}f^{(p)}, \psi\rangle,
		\end{aligned} 
	\end{equation}
	
	Thus for $ F\in \mathcal{E}'(\mathbf{S}^m) $, \eqref{key_identity} becomes
	\begin{equation*}
    		\langle I^{m-1,k}(F)_{i_s}, \Psi\rangle=  \langle\PD_{\xi^{i_s}} I^{m,k}F,\Psi\rangle-\langle \PD_{x^{i_s}}I^{m,k+1}F, \Psi \rangle.
	\end{equation*}
	This completes the proof.  
\end{proof}


	\begin{proof}[Proof of Theorem \ref{uniquenes_result_mrt}]
		The proof is based on induction in $m$. Note that here we are not using any inversion formula to prove the injectivity result.   For $m=0$, $F$
		is a function and $I^0F=0$ implies $F=0$; see \cite[Theorem 2.5.1]{Sharafutdinov_book}. Assume that the statement of  Theorem \ref{uniquenes_result_mrt} is true for some $m$. Then for $m+1$, from Lemma \ref{derivative_of_IkF} we obtain
		\begin{align*}
			I^{m,k}(F)_{i_{m+1}}=  \PD_{\xi^{i_{m+1} }} I^{m+1,k}F- \PD_{x^{i_{m+1}}}I^{m+1,k+1}F,
		\end{align*}
		for a fixed $i_{m+1}$ with $1\le i_{m+1} \le n $ and $0\le k\le m$. 
		By induction hypothesis now we have $I^{m,k}(F)_{i_{m+1}}=0 $ for $0\le k \le m$, which gives $(F)_{i_{m+1}}=0$. 
		Now varying $i_{m+1}$ we get $F=0$. This completes the proof. 
	\end{proof}
	
	Our next result says  that vanishing of only the $m^{\mathrm{th}}$ order generalized MRT implies injectivity. For an application of this theorem, see Remark \ref{highest_MRT}. We start with a lemma.

\begin{lemma}\label{translation_lemma}
Let $F \in \mathcal{E}'(\mathbf{S}^m)$, then the operator $ I^{m,k}F$ satisfies the  following relation
	\begin{align*}
		\langle\xi,\frac{\PD}{\PD x} \rangle^{p}  I^{m,k}F=  \begin{cases}
			(-1)^p\,\binom{k}{p}\,p!\, I^{m,k-p}F \quad \mbox{if} \quad p\le k\\
			0 \hspace{3.8cm} \mbox{if} \quad p>k.
		\end{cases} 
	\end{align*}

\end{lemma}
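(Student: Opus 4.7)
The plan is to prove the base case $p=1$ by a direct duality computation and then iterate. The key observation is the operator identity $\langle \xi, \partial_x \rangle I^{m,k} F = -k\, I^{m,k-1} F$ for $k \geq 1$, with the right-hand side interpreted as $0$ when $k=0$. Once this is established, the stated formula follows by an elementary induction on $p$, since applying $\langle \xi, \partial_x \rangle$ a total of $p$ times drops the moment index by $p$ and produces the product of constants $(-k)(-(k-1))\cdots(-(k-p+1)) = (-1)^p k!/(k-p)!$, which equals $(-1)^p \binom{k}{p} p!$.

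For the base case, since $F \in \mathcal{E}'(\mathbf{S}^m)$ is only a distribution, I would work through the adjoint definition \eqref{def:mrt_of_F}. For a test function $\Psi \in C_c^\infty(\Rn \times (\Rn\setminus\{0\}))$,
\begin{equation*}
\langle \langle \xi, \partial_x \rangle I^{m,k} F, \Psi \rangle = -\langle I^{m,k} F, \langle \xi, \partial_x \rangle \Psi \rangle = -\langle F, (I^{m,k})^* (\langle \xi, \partial_x \rangle \Psi) \rangle.
\end{equation*}
The main technical step is to evaluate $(I^{m,k})^* (\langle \xi, \partial_x \rangle \Psi)$. The crucial identity is $(\langle \xi, \partial_x \rangle \Psi)(x-t\xi, \xi) = -\frac{d}{dt}\Psi(x-t\xi, \xi)$ coming from the chain rule. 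Plugging this into \eqref{adjoint_expression} and integrating by parts in the $t$-variable (the boundary terms vanish because $\Psi$ has compact support in $(x,\xi)$) brings down a factor of $k$ from the differentiation of $t^k$ when $k \geq 1$, yielding $(I^{m,k})^* (\langle \xi, \partial_x \rangle \Psi) = k (I^{m,k-1})^* \Psi$. When $k = 0$ there is no $t^k$ factor to differentiate and the integration by parts produces only vanishing boundary terms, giving $0$.

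Combining, one obtains $\langle \xi, \partial_x \rangle I^{m,k} F = -k\, I^{m,k-1} F$ for $k \geq 1$ and $\langle \xi, \partial_x \rangle I^{m,0} F = 0$ in $\mathcal{D}'(\Rn \times (\Rn\setminus\{0\}))$. The induction on $p$ then proceeds: assuming the formula for $p-1$, one applies $\langle \xi, \partial_x \rangle$ once more and uses the base case, with the combinatorial identity $k \binom{k-1}{p-1}(p-1)! = \binom{k}{p} p!$ producing the stated constant; when $p > k$, after $k$ applications we have reduced to a constant multiple of $I^{m,0} F$, and one further application annihilates it.

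The calculations here are routine; the main thing to be careful about is the distributional setting — one cannot simply differentiate $F(x+t\xi)$ under the integral sign as in the smooth case, so the entire argument must be phrased through the adjoint and test functions, and the compact support of $\Psi$ (not of $F$) is what justifies discarding the boundary terms in the $t$-integration by parts.
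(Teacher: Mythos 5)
Your proposal is correct and follows essentially the same route as the paper: both pass through the adjoint, use the chain-rule identity $(\langle\xi,\partial_x\rangle\Psi)(x-t\xi,\xi)=-\frac{d}{dt}\Psi(x-t\xi,\xi)$, and integrate by parts in $t$ against $t^k$. The only difference is organizational — the paper performs all $p$ integrations by parts at once to get $(I^{m,k})^*\langle\xi,\partial_x\rangle^p\Psi=\frac{k!}{(k-p)!}(I^{m,k-p})^*\Psi$ directly, while you establish the $p=1$ case and iterate, which yields the same constants.
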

\begin{proof}
	By definition, for any test function $  \Psi \in C_c^{\infty}(\Rn \times \Rn\setminus\{0\})$, we have 
	\begin{align}\label{Eq4.13}
		\left\langle\langle\xi,\frac{\PD}{\PD x} \rangle^{p}I^{m,k}F,\Psi \right\rangle =
		 (-1)^p \left\langle F, (I^{m,k})^*\langle\xi,\frac{\PD}{\PD x} \rangle^{p}\Psi \right\rangle.
	\end{align}
	From  \eqref{adjoint_expression} we have
	\begin{align}\label{Eq4.14}
		(I^{m,k})^*\langle\xi,\frac{\PD}{\PD x} \rangle^{p}\Psi &= \sum_{l=0}^{m}\int_{\Rn}\int_{\mathbb{R}}t^k\, \, \xi^{i_1}\cdots \xi^{i_l}\, \langle\xi,\frac{\PD}{\PD x} \rangle^{p} \Psi(x-t\xi,\xi) \D t\,\D \xi\nonumber\\
		&=(-1)^p\sum_{l=0}^{m}\int_{\Rn}\int_{\mathbb{R}}t^k\, \, \xi^{i_1}\cdots \xi^{i_l}\,\frac{\D^p}{\D t^p}  \lb \Psi(x-t\xi,\xi)\rb  \D t\,\D \xi\nonumber\\
		&= \begin{cases}
			\frac{k!}{(k-p)!}\sum\limits_{l=0}^{m}\int_{\Rn}\int_{\mathbb{R}}t^{k-p}\, \, \xi^{i_1}\cdots \xi^{i_l}\,  \Psi(x-t\xi,\xi) \D t\,\D \xi \quad \mbox{if} \quad p\le k\nonumber\\
			0 \hspace{8.4cm}\mbox{if} \quad p> k
		\end{cases}\nonumber \\
		& = \begin{cases}
			\frac{k!}{(k-p)!}(I^{m,k-p})^*\Psi \quad \mbox{if} \quad p\le k\\
			0 \hspace{3.1cm}\mbox{if} \quad p> k.
		\end{cases}
	\end{align}
	In the third equality above we have used the integration by parts with respect to $ t $ variable. Combining \eqref{Eq4.13} and \eqref{Eq4.14} we obtain
	
	\begin{equation*}
		\begin{aligned}
			\langle\langle\xi,\frac{\PD}{\PD x} \rangle^{p}I^{m,k}F,\Psi \rangle= \begin{cases}
				(-1)^p\,\, \binom{k}{p}\,\,p!\,\,\langle I^{m,k-p} F,\Psi\rangle\quad \mbox{if} \quad p\le k\\
				0 \hspace{4.8cm}\mbox{if} \quad p> k.
			\end{cases}
		\end{aligned}
	\end{equation*}
	This completes the proof of the Lemma \ref{translation_lemma}.
\end{proof}

\begin{theorem}\label{mrt_coro_full}
	    Let $F \in \mathcal{E}'(\mathbf{S}^m) $ and $ I^{m,m} F(x,\xi) =0 $ for all $ (x,\xi)\in \R^n \times \R^n\setminus\{0\} $, then we have $ F=0 $.
	\end{theorem}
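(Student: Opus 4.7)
The statement follows almost immediately by combining Lemma \ref{translation_lemma} with the injectivity result of Theorem \ref{uniquenes_result_mrt}, so the plan is short.

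The strategy is to use Lemma \ref{translation_lemma} as a \emph{ladder} that converts vanishing of $I^{m,m}F$ into vanishing of all the lower momentum ray transforms $I^{m,k}F$, $0 \le k \le m-1$. Concretely, assuming $I^{m,m}F \equiv 0$ on $\R^n\times(\R^n\setminus\{0\})$, I would apply the operator $\langle \xi, \partial/\partial x\rangle^p$ to both sides and invoke Lemma \ref{translation_lemma} with $k=m$ and $p=1,2,\ldots,m$ successively. For each such $p \le m$, Lemma \ref{translation_lemma} yields
\begin{equation*}
0 = \langle \xi, \tfrac{\PD}{\PD x}\rangle^{p} I^{m,m}F = (-1)^p \binom{m}{p} p!\, I^{m,m-p}F,
\end{equation*}
and since the combinatorial prefactor is nonzero, one concludes $I^{m,m-p}F = 0$ in $\mathcal{D}'(\R^n\times(\R^n\setminus\{0\}))$. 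Letting $p$ range over $0,1,\ldots,m$, I obtain $I^{m,k}F = 0$ for every $k \in \{0, 1, \ldots, m\}$.

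At that point, Theorem \ref{uniquenes_result_mrt} applies directly to give $F = 0$, completing the proof. There is no real obstacle here; the only point worth being slightly careful about is that Lemma \ref{translation_lemma} is formulated in the sense of distributions on $\R^n\times(\R^n\setminus\{0\})$, so the hypothesis $I^{m,m}F(x,\xi)=0$ pointwise is stronger than needed, and the chain of identities above is valid distributionally, which is exactly the form in which Theorem \ref{uniquenes_result_mrt} is stated. Thus the proof reduces to the two-line calculation above plus a citation of the two prior results.
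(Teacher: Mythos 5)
Your proposal is correct and is exactly the argument the paper intends: its proof of this theorem is the one-line citation of Lemma \ref{translation_lemma} and Theorem \ref{uniquenes_result_mrt}, and your application of $\langle\xi,\partial/\partial x\rangle^{p}$ with $k=m$, $p=0,\dots,m$ to deduce $I^{m,k}F=0$ for all $k$ is precisely the intended elaboration. No issues.
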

		
	\bpr The proof follows from Lemma \ref{translation_lemma} and Theorem \ref{uniquenes_result_mrt}.
	\epr
	
	In fact, one can prove a much stronger result in the form of an inversion algorithm for the generalized MRT as the following theorem shows. A trivial consequence of it is a partial data injectivity result; see Remark \ref{partial_mrt} . While we do not require the inversion algorithm to prove the main result of this paper, the result presented here could be of independent interest.
	
	\subsection{Inversion of MRT}\label{mrt_inversion}
To state our inversion formula we first introduce a notation.
For every non-negative integer $k$, the differential operator  $ P_{k}(\xi,\PD_{\xi}) $ of order $k$ is given by 
\begin{align}\label{definition_of_xidelxi_m}
  P_{k}(\xi,\PD_{\xi}) = \xi^{i_1} \cdots \xi^{i_k} \frac{\PD^k}{ \PD \xi^{i_1} \cdots \PD \xi^{i_k}}.
\end{align}

	\begin{theorem}\label{Th1.1}
		Suppose $ F $ is as in \eqref{definition_of_F_m}, then for every integer $j$ with $ 0\le j\le m $,  the following formula 
		\begin{equation}\label{Eq1.3}
			I^0f^{(j)}_{i_1\dots i_{j}}
			=(-1)^{m-j}\frac{\sigma(i_1\dots i_{j})}{(m-j)!\,(j)!}  P_{m-j}(\xi,\PD_{\xi})
			\sum_{k=0}^{j}\,(-1)^k \binom{j}{k} \frac{\partial^{j}(I^{m,k}\!F)}{\partial x^{i_1}\dots\partial x^{i_k}\partial\xi^{i_{k+1}}\dots\partial\xi^{i_{j}}},
		\end{equation}
		holds true for all indices $(i_1,\dots,i_{j})$. The RHS of \eqref{Eq1.3} is defined in the sense of distributions.
	\end{theorem}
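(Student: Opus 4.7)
The plan is to first iterate Lemma \ref{derivative_of_IkF} $j$ times to peel off the indices $i_1,\ldots,i_j$, and then use the differential operator $P_{m-j}(\xi,\partial_\xi)$ to isolate the component $f^{(j)}$ via the homogeneity of $I^{m,k}F$ in the $\xi$-variable.

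Denote by $(F)_{i_1\cdots i_j}$ the iterated operation $(\cdots((F)_{i_1})_{i_2}\cdots)_{i_j}$, which lies in $\mathcal{E}'(\mathbf{S}^{m-j})$ and whose $p$-th component is $\tfrac{(p+j)!}{p!}\,f^{(p+j)}_{l_1\cdots l_p i_1\cdots i_j}$. Writing $S$ for the shift $k\mapsto k+1$ in the second index of $I^{m,k}$, an induction on $j$ using Lemma \ref{derivative_of_IkF} gives
\[
I^{m-j,0}(F)_{i_1\cdots i_j}=\prod_{r=1}^{j}\bigl(\partial_{\xi^{i_r}}-\partial_{x^{i_r}}S\bigr)\,I^{m,0}F.
\]
Expanding the product over subsets $A\subseteq\{1,\ldots,j\}$ and grouping by $|A|=k$, a straightforward symmetrization identity rewrites the right-hand side as
\[
\sigma(i_1\cdots i_j)\sum_{k=0}^{j}(-1)^k\binom{j}{k}\frac{\partial^{j}(I^{m,k}F)}{\partial x^{i_1}\cdots \partial x^{i_k}\,\partial\xi^{i_{k+1}}\cdots \partial\xi^{i_j}},
\]
i.e.\ precisely the inner sum appearing in \eqref{Eq1.3}.

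Separately, applying the definition of $I^{m-j,0}$ directly to the tensor $(F)_{i_1\cdots i_j}$ yields
\[
I^{m-j,0}(F)_{i_1\cdots i_j}(x,\xi)=\sum_{q=j}^{m}\frac{q!}{(q-j)!}\int_{\Rb}f^{(q)}_{l_1\cdots l_{q-j}i_1\cdots i_j}(x+t\xi)\,\xi^{l_1}\cdots \xi^{l_{q-j}}\,\D t,
\]
and each summand is positively homogeneous in $\xi$ of degree $q-j-1$. From $[\partial_{\xi^i},\xi^j]=\delta^{j}_{i}$ one checks by induction that $P_{\ell}(\xi,\partial_\xi)=E(E-1)\cdots(E-\ell+1)$, where $E=\xi\cdot\partial_\xi$ is the Euler field; hence on a degree-$\alpha$ homogeneous distribution $P_\ell$ acts as multiplication by $\alpha(\alpha-1)\cdots(\alpha-\ell+1)$. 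With $\ell=m-j$, for $j<q\le m$ we have $\alpha=q-j-1\in\{0,\ldots,m-j-1\}$ and the product contains the factor $0$, so $P_{m-j}$ annihilates every $q>j$ contribution; for $q=j$ (where $\alpha=-1$) the eigenvalue is $(-1)^{m-j}(m-j)!$. Consequently $P_{m-j}\,I^{m-j,0}(F)_{i_1\cdots i_j}=(-1)^{m-j}(m-j)!\,j!\,I^0 f^{(j)}_{i_1\cdots i_j}$, and equating this with $P_{m-j}$ applied to the display from the iterated lemma, then solving for $I^0 f^{(j)}_{i_1\cdots i_j}$, produces \eqref{Eq1.3}.

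The principal technical obstacle will be to make the homogeneity/eigenvalue argument rigorous in the distributional framework of Definition \ref{defintion_of_mrt_for_F}: one must verify that $P_{m-j}$ coincides with the falling factorial of the Euler field when acting on distributions on $\Rb^n\times(\Rb^n\setminus\{0\})$ that are positively homogeneous in $\xi$. This is handled by pairing against $\Psi\in C^\infty_c(\Rb^n\times(\Rb^n\setminus\{0\}))$ and transferring derivatives onto the test function, using the compact support of $F$. The remaining ingredients — iterating Lemma \ref{derivative_of_IkF}, the combinatorial symmetrization identity, and the explicit form of $(F)_{i_1\cdots i_j}$ — are routine bookkeeping that I would only sketch.
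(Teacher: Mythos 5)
Your proposal is correct and follows essentially the same route as the paper: the paper proves \eqref{Eq1.3} by induction on $m$, where each inductive step peels off one index via Lemma \ref{derivative_of_IkF} and the $j=0$ base case (Lemma \ref{Lm1.2}) is exactly your homogeneity/Euler-operator argument (the paper's identity $P_{m}(\xi,\PD_{\xi})I^kf^{(l)}=\prod_{j=1}^{m}(l-k-j)\,I^kf^{(l)}$ is your falling-factorial eigenvalue computation), together with the same recombination of the binomial sums. Your version simply unrolls that induction into a direct $j$-fold iteration followed by one application of $P_{m-j}$; the ingredients, constants, and combinatorics are identical.
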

To give the proof of Theorem \ref{Th1.1} we study few more properties of MRT, which will be presented in several lemmas. First, we recall the notion of  homogeneous distributions.
\begin{definition}\label{action_of_taul_smfn}
    	Let $\psi\in  C_c^{\infty}(\Rn\times \Rn\setminus\{0\})$ and  $0<1-\epsilon <\lambda<1+\epsilon $, where $\epsilon >0$ is  small. Define $ \tau_{\lambda}:  C_c^{\infty}(\Rn\times \Rn\setminus\{0\}) \rightarrow  C_c^{\infty}(\Rn\times \Rn\setminus\{0\}) $  by \[\tau_{\lambda}\psi(x,\xi) = \psi\left(x,\lambda\,\xi\right) \quad \mbox{ for all } \lambda \in (1-\epsilon,1+\epsilon).\]
\end{definition}
\vspace*{.4cm}
Next we define the action of $\tau_{\lambda}$ on distributions.
\begin{definition}\label{action_of_taul_distributions}
 For each $\lambda \in (1-\epsilon,1+\epsilon)$, define the distribution $\tau_{\lambda} I^kf^{(l)}$ by
 \begin{align*}
\langle \tau_{\lambda} I^kf^{(l)},\psi\rangle&= \lambda^{-n} 	\langle I^kf^{(l)},\tau_{\frac{1}{\lambda}} \psi\rangle \quad \psi \in C_c^{\infty}(\Rn\times \Rn\setminus\{0\})\nonumber\\
		&=\lambda^{-n}	\langle f^{(l)},(I^k)^*\tau_{\frac{1}{\lambda}} \psi\rangle .   
 \end{align*}
\end{definition}

\begin{lemma}[Homogeneity in the second variable]
	For every non-negative integer $k$, we have the following identities:
	\begin{align}\label{Eq4.10}
		\tau_{\lambda}	I^kf^{(l)}(x,\xi)=\lambda^{l-k-1} I^kf^{(l)}(x,\xi) \quad \mbox{for} \quad f^{(l)}\in \mathcal{E}'(S^l),
	\end{align}
	and for $F = \sum_{l=0}^{m}f^{(l)}_{i_1\cdots i_l}$,
	\begin{align}\label{homeogenity}
		\tau_{\lambda}	I^{m,k}F(x,\xi)= \sum_{l=0}^{m}\lambda^{l-k-1} I^kf^{(l)} \quad \mbox{for} \quad  F\in \mathcal{E}'(\mathbf{S}^m).
	\end{align}
\end{lemma}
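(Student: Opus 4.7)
The plan is to prove both identities by unwinding the definitions and performing a change of variables in the defining integral of the adjoint $(I^k)^*$. The second identity will follow immediately from the first by linearity once we observe that $I^{m,k}F = \sum_{l=0}^{m} I^{k} f^{(l)}$ as distributions on $\R^n \times (\R^n \setminus \{0\})$ (this is implicit in Definition \ref{defintion_of_mrt_for_F}). So the entire content is the scalar identity $\tau_\lambda I^k f^{(l)} = \lambda^{l-k-1} I^k f^{(l)}$.

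To prove the first identity, I start from Definition \ref{action_of_taul_distributions}, which gives
\[
\langle \tau_\lambda I^k f^{(l)}, \psi\rangle = \lambda^{-n} \langle f^{(l)}, (I^k)^* \tau_{1/\lambda}\psi\rangle
\]
for any test function $\psi$. So it suffices to show that $(I^k)^* \tau_{1/\lambda}\psi = \lambda^{l-k-1+n} (I^k)^*\psi$ componentwise; the factor of $\lambda^{-n}$ then absorbs the one from the definition and produces the desired scaling $\lambda^{l-k-1}$. Starting from the explicit formula \eqref{def:adjoint}, I compute
\[
[(I^k)^* \tau_{1/\lambda}\psi]_{i_1\cdots i_l} = \int_{\R^n}\int_\R t^k\, \xi^{i_1}\cdots\xi^{i_l}\, \psi(x - t\xi, \xi/\lambda)\, dt\, d\xi
\]
and perform the change of variables $\eta = \xi/\lambda$, $s = \lambda t$. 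This gives Jacobian factors $d\xi = \lambda^n d\eta$ and $dt = \lambda^{-1}ds$; the monomial $\xi^{i_1}\cdots\xi^{i_l}$ contributes $\lambda^l$, the factor $t^k$ contributes $\lambda^{-k}$, and crucially $x - t\xi = x - s\eta$ so the argument of $\psi$ becomes $\psi(x-s\eta,\eta)$. Collecting the powers of $\lambda$ produces exactly $\lambda^{l-k-1+n}[(I^k)^*\psi]_{i_1\cdots i_l}$, as required.

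The calculation is mechanical; the only step that needs care is checking that each scaling factor is attributed correctly, and in particular noting that the $t\xi$ coupling inside $\psi$ is what forces the substitution $s = \lambda t$ (rather than leaving $t$ untouched) so that $\psi$ ends up evaluated at a genuinely rescaled argument $(x - s\eta, \eta)$ that matches $(I^k)^*\psi$. No functional-analytic difficulty arises since the pairings are against compactly supported test functions on $\R^n \times (\R^n \setminus \{0\})$, where the change of variable $\xi \mapsto \lambda\eta$ with $\lambda$ close to $1$ stays in the valid range. Once the scalar identity is in hand, the tensor case \eqref{homeogenity} is immediate: applying $\tau_\lambda$ termwise to $I^{m,k}F = \sum_{l=0}^{m} I^k f^{(l)}$ and using the scalar result on each summand gives the stated sum $\sum_{l=0}^{m}\lambda^{l-k-1} I^k f^{(l)}$.
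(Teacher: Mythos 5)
Your proof is correct and follows essentially the same route as the paper: both reduce the claim to showing $(I^k)^*\tau_{1/\lambda}\psi=\lambda^{l+n-k-1}(I^k)^*\psi$ via the explicit formula \eqref{def:adjoint}, the only cosmetic difference being that you perform the substitutions $\xi\mapsto\lambda\eta$ and $t\mapsto s/\lambda$ in one combined step while the paper does them consecutively. The passage to \eqref{homeogenity} by linearity is likewise identical.
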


\begin{proof}
	For any $ f \in \mathcal{E}'(S^l)  $ from Definition \ref{action_of_taul_distributions}, we have
	\begin{align}\label{Eq4.12}
		\langle \tau_{\lambda} I^kf^{(l)},\psi\rangle&= \lambda^{-n} 	\langle I^kf^{(l)},\tau_{\frac{1}{\lambda}} \psi\rangle \quad \psi \in C_c^{\infty}(\Rn\times \Rn\setminus\{0\})\nonumber\\
		&=\lambda^{-n}	\langle f^{(l)},(I^k)^*\tau_{\frac{1}{\lambda}} \psi\rangle.
	\end{align}	
	From \eqref{def:adjoint} we have 
	\begin{align*}
		(I^k)^* \tau_{\frac{1}{\lambda}}\psi  &=  \int_{\Rn}\int_{\mathbb{R}}t^k\, \, \xi^{i_1}\cdots \xi^{i_l}\, \tau_{\frac{1}{\lambda}}\psi(x-t\xi,\xi) \D t\,\D \xi \\
		&=\int_{\Rn}\int_{\mathbb{R}}t^k\, \, \xi^{i_1}\cdots \xi^{i_l}\, \psi(x-t\xi,\frac{\xi}{\lambda}) \D t\,\D \xi \\
		&= \lambda^{l+n} \int_{\Rn}\int_{\mathbb{R}}t^k\, \, \xi^{i_1}\cdots \xi^{i_l}\, \psi(x-t\lambda\xi,\xi) \D t\,\D \xi \quad [\xi \mapsto \lambda \xi]\\
		&= \lambda^{l+n-k-1} \int_{\Rn}\int_{\mathbb{R}}t^k\, \, \xi^{i_1}\cdots \xi^{i_l}\, \psi(x-t\xi,\xi) \D t\,\D \xi \quad [t \mapsto \lambda\, t]\\
		&= \lambda^{l+n-k-1}	(I^k)^* \psi.
	\end{align*}
	This together with \eqref{Eq4.12} gives
	\[ \langle \tau_{\lambda} I^kf^{(l)},\psi\rangle =\lambda^{l-k-1} \langle I^kf^{(l)},\psi\rangle.\]
	Thus for $F\in \mathcal{E}'(\mathbf{S}^m)$ with $F = \sum_{l=0}^{m}f^{(l)}_{i_1\cdots i_l}$ we have
	\[  \langle\tau_{\lambda} I^kF,\Psi\rangle= \langle\sum_{l=0}^{m} \lambda^{l-k-1}\,I^kf^{(l)},\Psi\rangle. \]
	This finishes the proof.
	\end{proof}

	\begin{lemma}
	Suppose $f^{(l)}\in \mathcal{E}'(S^l)$, then the following relations hold true. 
	\begin{align}
	    \frac{d^m}{d\lambda^m} \tau_{\lambda}	I^kf^{(l)}(x,\xi)\Big|_{\lambda=1}& = (l-k-1)(l-k-2)\cdots (l-k-m)\, I^kf^{(l)} \label{lambda_derivative}\\
	     P_{m}(\xi,\PD_{\xi})  I^kf^{(l)} &=
	      (l-k-1)(l-k-2)\cdots (l-k-m)\, I^kf^{(l)}. \label{xi_derivative}
	\end{align}
This implies 
\begin{align}\label{homogeneity_and_xi_derivative}
   \frac{d^m}{d\lambda^m} \tau_{\lambda}	I^kf^{(l)}(x,\xi)\Big|_{\lambda=1} = P_{m}(\xi,\PD_{\xi})  I^kf^{(l)}= \prod_{j=1}^{m} (l-k-j) \, I^kf^{(l)}. 
\end{align}

	\end{lemma}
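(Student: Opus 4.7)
The plan is to prove the three identities in turn, with \eqref{homogeneity_and_xi_derivative} following immediately once \eqref{lambda_derivative} and \eqref{xi_derivative} are in hand.

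For \eqref{lambda_derivative}, I would start directly from the homogeneity identity \eqref{Eq4.10}, which asserts $\tau_{\lambda} I^k f^{(l)}(x,\xi) = \lambda^{l-k-1}\, I^k f^{(l)}(x,\xi)$ as distributions on $\Rb^n \times (\Rb^n\setminus\{0\})$, for all $\lambda \in (1-\epsilon, 1+\epsilon)$. Since the right-hand side is a smooth one-parameter family of distributions given by scalar multiplication of the fixed distribution $I^k f^{(l)}$ by $\lambda^{l-k-1}$, I may differentiate $m$ times in $\lambda$ and evaluate at $\lambda = 1$, which yields exactly $(l-k-1)(l-k-2)\cdots(l-k-m)\, I^k f^{(l)}$.

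For \eqref{xi_derivative}, the key idea is an operator identity expressing $P_m(\xi,\PD_\xi)$ as a polynomial in the Euler operator $E := \xi^i\,\PD_{\xi^i}$; specifically, I will establish
\begin{equation*}
P_m(\xi,\PD_\xi) = E(E-1)(E-2)\cdots(E-m+1)
\end{equation*}
as differential operators on $\Dc'(\Rb^n\setminus\{0\})$. The proof is by induction on $m$. The case $m=1$ is simply $P_1 = E$. For the inductive step, the commutator relation
\begin{equation*}
[\PD_j, \xi^{i_1}\cdots\xi^{i_{m-1}}] = \sum_{k=1}^{m-1} \delta_j^{i_k}\, \xi^{i_1}\cdots \widehat{\xi^{i_k}}\cdots \xi^{i_{m-1}}
\end{equation*}
yields, after contracting with $\xi^j$, the identity $E\,P_{m-1} = (m-1)\,P_{m-1} + P_m$. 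Because $P_{m-1}$ has equal numbers of $\xi$'s and $\PD_\xi$'s, it commutes with $E$, so this rearranges to $P_m = (E-(m-1))\,P_{m-1}$, and the claimed factorization follows from the induction hypothesis together with the fact that polynomials in the single operator $E$ commute.

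Combining this operator identity with Euler's relation for $I^k f^{(l)}$ then closes the argument. Differentiating \eqref{Eq4.10} once at $\lambda=1$ and using that $\tfrac{d}{d\lambda}\big|_{\lambda=1}\tau_\lambda u = E\,u$ distributionally in $\xi$ (verified by duality, integrating by parts against a test function $\psi$), I obtain the distributional Euler identity $E\,I^k f^{(l)} = (l-k-1)\,I^k f^{(l)}$. Iterating, each factor contributes $(E-j)\,I^k f^{(l)} = (l-k-1-j)\,I^k f^{(l)}$, giving \eqref{xi_derivative}. Finally, \eqref{homogeneity_and_xi_derivative} is a direct concatenation of \eqref{lambda_derivative} and \eqref{xi_derivative}. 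I expect the main technical point to be the careful commutator bookkeeping needed for the operator factorization of $P_m$; the rest is routine homogeneity combined with Euler's identity.
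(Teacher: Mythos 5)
Your argument is correct, and for \eqref{lambda_derivative} it coincides with the paper's: both simply differentiate the homogeneity relation \eqref{Eq4.10} $m$ times in $\lambda$ and evaluate at $\lambda=1$. For \eqref{xi_derivative} the inductive skeleton is also the same --- your recursion $P_m(\xi,\PD_\xi)=(E-(m-1))P_{m-1}(\xi,\PD_\xi)$ with $E=\langle\xi,\PD_\xi\rangle=P_1(\xi,\PD_\xi)$ is exactly the paper's product relation \eqref{a_product_relation} read in reverse --- but you establish the base case, the Euler identity $E\,I^kf^{(l)}=(l-k-1)I^kf^{(l)}$, by a genuinely different route. The paper gets it from the differential identity \eqref{key_identity} of Lemma \ref{derivative_of_IkF}, contracting with $\xi^s$ and invoking Lemma \ref{translation_lemma} to evaluate $\langle\xi,\PD_x\rangle I^{k+1}f^{(l)}=-(k+1)I^kf^{(l)}$; you instead differentiate \eqref{Eq4.10} once in $\lambda$ and identify $\tfrac{d}{d\lambda}\tau_\lambda\big|_{\lambda=1}$ with the Euler operator $E$ by duality, where the $\lambda^{-n}$ Jacobian built into Definition \ref{action_of_taul_distributions} is precisely what makes the integration by parts in $\xi$ close up. Your route is self-contained within the homogeneity lemma, avoids the two auxiliary MRT lemmas for the base case, and makes \eqref{xi_derivative} a formal consequence of \eqref{lambda_derivative}, so that \eqref{homogeneity_and_xi_derivative} is automatic rather than an a posteriori agreement of two independent computations; the paper's route has the complementary feature of deriving \eqref{xi_derivative} purely from the algebraic structure of the transform. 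Both arguments are sound, and your commutator bookkeeping (in particular $E\,P_{m-1}=(m-1)P_{m-1}+P_m$ and the fact that the degree-zero operator $P_{m-1}$ commutes with $E$) checks out.
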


\begin{proof}
    For $f^{(l)}\in \mathcal{E}'(S^l)$ from \eqref{Eq4.10}, we get
	\begin{align}\label{derivative_wrt_lambda}
	\frac{d^m}{d \lambda^m} \tau_{\lambda} I^kf^{(l)}\Big|_{\lambda=1} =(l-k-1) \cdots(l-k-m)\, I^kf^{(l)}. 
	\end{align}
	 This completes the proof of  \eqref{lambda_derivative}.\smallskip
	 
	To prove \eqref{xi_derivative}, we use an induction argument in $m$.
	For $m=1$ from \eqref{key_identity} 
	we have
	\begin{align*}
	    	l\, I^{k}(f^{(l)})_s=  \PD_{\xi^s} I^kf^{(l)}- \PD_{x^s}I^{k+1}f^{(l)}.
	\end{align*}
	Now multiplying both sides of above by $\xi^s$ and taking summation over $s$, we obtain
	\begin{align*}
	    	l\, I^{k}f^{(l)}=   \xi^s \PD_{\xi^s} I^kf^{(l)}- \xi^s\PD_{x^s}I^{k+1}f^{(l)}.
	\end{align*}
This together with Lemma \ref{translation_lemma}    gives
	\begin{align}\label{derivative_wrt_xi}
	 P_{1}(\xi,\PD_{\xi}) I^kf^{(l)} =   \langle \xi,\PD_{\xi}\rangle I^k f^{(l)} &= (l-k-1) I^kf^{(l)}.
	\end{align}
Assume \eqref{xi_derivative} is true for some $m$. This implies
\begin{align}\label{induction_hypo}
    P_m(\xi,\PD_{\xi})=(l-k-1)(l-k-2)\cdots (l-k-m)\, I^kf^{(l)}.
\end{align}
Now apply $ P_{1}(\xi,\PD_{\xi}) $ in above and obtain
\begin{align}\label{induction_step_m}
   P_{1}(\xi,\PD_{\xi})\left(  P_{m}(\xi,\PD_{\xi})I^k f^{(l)}\right) &=   (l-k-1)(l-k-2)\cdots (l-k-m)\,  P_{1}(\xi,\PD_{\xi})\,I^kf^{(l)}.
   \end{align}
   A simple calculation gives 
   \begin{align}\label{a_product_relation}
        P_{1}(\xi,\PD_{\xi}) P_{m}(\xi,\PD_{\xi}) &= P_{m+1}(\xi,\PD_{\xi}) + m P_{m}(\xi,\PD_{\xi}). 
   \end{align}
   Combining \eqref{derivative_wrt_xi},\eqref{induction_step_m} and \eqref{a_product_relation} we obtain
    \begin{align*}
    P_{m+1}(\xi,\PD_{\xi})  I^k f^{(l)})+ m  P_{m}(\xi,\PD_{\xi})\,I^kf^{(l)} &= (l-k-1)^2(l-k-2)\cdots (l-k-m)\, I^kf^{(l)}.
    \end{align*}
    This together with \eqref{induction_hypo} gives
    \begin{align*}
   P_{m+1}(\xi,\PD_{\xi})\,  I^k f^{(l)}= (l-k-1)(l-k-2)\cdots (l-k-m) (l-k-1-m) \, I^kf^{(l)}.
\end{align*}
This completes the proof of \eqref{xi_derivative}. 
\end{proof}

\begin{lemma}\label{Lm1.2}
		Suppose $F = \sum_{l=0}^{m}f^{(l)}_{i_1\cdots i_l} \in \mathcal{E}'(\mathbf{S^m})$, then we have the following identity: 
		\[   I^0 f^{0}(x,\xi)  = \frac{(-1)^m}{m!} P_{m}(\xi,\PD_{\xi}) I^{m,0}F. \]
	\end{lemma}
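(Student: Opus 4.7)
The plan is to apply identity \eqref{homogeneity_and_xi_derivative} with $k=0$ to each summand $I^0 f^{(l)}$ appearing in $I^{m,0}F = \sum_{l=0}^{m} I^0 f^{(l)}$, and exploit the vanishing of the resulting scalar coefficient for all indices $l \neq 0$. Concretely, for each $l$ with $0\le l\le m$, \eqref{homogeneity_and_xi_derivative} gives
\[
P_m(\xi,\PD_{\xi})\, I^0 f^{(l)} \;=\; \prod_{j=1}^{m}(l-j)\; I^0 f^{(l)}.
\]

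The crucial observation is that when $1\le l\le m$, the product $\prod_{j=1}^{m}(l-j)$ contains the factor $(l-l)=0$, hence vanishes identically. On the other hand, for $l=0$ the product equals $(-1)(-2)\cdots(-m) = (-1)^{m}\,m!$. Thus only the $l=0$ term in the decomposition $I^{m,0}F = \sum_{l=0}^{m} I^0 f^{(l)}$ contributes nontrivially after applying $P_m(\xi,\PD_{\xi})$, and one obtains
\[
P_m(\xi,\PD_{\xi})\, I^{m,0} F \;=\; (-1)^{m}\,m!\; I^0 f^{(0)},
\]
from which the claimed formula follows upon dividing by $(-1)^{m}\,m!$.

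Since the linearity of $P_m(\xi,\PD_{\xi})$ on distributions and the identity \eqref{homogeneity_and_xi_derivative} have already been established, there is essentially no obstacle here; the proof is a one-line computation. The only subtlety worth flagging is that the identities are understood in the sense of distributions on $\Rb^n \times (\Rb^n\setminus\{0\})$, so the manipulation is valid pointwise after pairing with a test function $\Psi\in C_c^{\infty}(\Rb^n\times(\Rb^n\setminus\{0\}))$. This lemma serves as the base case ($j=0$) for the general inversion formula of Theorem \ref{Th1.1}, where the inductive step will similarly rely on \eqref{homogeneity_and_xi_derivative} combined with the recursive identity \eqref{key_identity_F_m} of Lemma \ref{derivative_of_IkF} to isolate each component $f^{(j)}$.
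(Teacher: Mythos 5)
Your proof is correct and is essentially the paper's argument: the paper differentiates the homogeneity relation $\tau_{\lambda}I^{m,0}F=\sum_{p}\lambda^{p-1}I^0f^{(p)}$ $m$ times in $\lambda$ (so all terms with $p\ge 1$ drop out as polynomials of degree less than $m$) and then invokes \eqref{homogeneity_and_xi_derivative} to replace the $\lambda$-derivative by $P_m(\xi,\PD_\xi)$, which is the same computation you perform termwise via the vanishing factor $(l-l)$ in $\prod_{j=1}^m(l-j)$.
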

	 \begin{proof}
	    Let $\epsilon$ be a positive number such that $0<1-\epsilon <\lambda < 1+\epsilon$, then substituting $k=0$ in \eqref{homeogenity}  we obtain
	    \begin{align*}
	        \tau_{\lambda}	I^{m,0}F(x,\xi)= \sum_{p=0}^{m}\lambda^{p-1} I^0f^{(p)}.
	    \end{align*}
	    Differentiating this $m$ times with respect to $\lambda$ we get
	    \begin{align*}
	        \frac{d ^m}{d \lambda^m} \tau_{\lambda}	I^{m,0}F =\frac{d ^m}{d \lambda^m}( \lambda^{-1})\, I^0f^{(0)}= \frac{(-1)^m\, m!}{\lambda^{m+1}}\, I^0f^{(0)}.
	    \end{align*}
	    Evaluating above at $\lambda=1$ gives
	    \[ I^0f^{(0)}= \frac{(-1)^m}{m!}\,\frac{d ^m}{d \lambda^m} \tau_{\lambda}	I^{m,0}F \Big|_{\lambda=1}. \]
	    This together with \eqref{homogeneity_and_xi_derivative} implies
	    \begin{equation*}
	      I^0f^{(0)}= \frac{(-1)^m}{m!}\, P_{m}(\xi,\PD_{\xi})	I^{m,0}F.  
	    \end{equation*}
	 \end{proof}
	\begin{proof}[Proof of Theorem \ref{Th1.1}] We prove the result by induction on $m$.
	For $m=0$ \eqref{Eq1.3} holds trivially. Now for any $m\geq 1$ and $ j=0, $ \eqref{Eq1.3} reduces to 	\[ I^0f^{(0)}= \frac{(-1)^m}{m!}P_{m}(\xi,\PD_{\xi}) I^{m,0}F,\] and this is proved in Lemma \ref{Lm1.2}. 
			   Assume \eqref{Eq1.3} to be true for some $m\geq 1$ and all $j$ with $ 0\le j\le m $. 
		For $ m+1 $, fix an index $i_{j+1}\, (1\le i_{j+1}\le n)$.
		

		Then  from Lemma \ref{derivative_of_IkF} we obtain
		\begin{align*}
			&I^{m,k}(F)_{i_{j+1}}=\frac{\PD I^{m+1,k}F}{\PD \xi_{i_{j+1}}}-\frac{\PD I^{m+1,k+1}F}{\PD x_{i_{j+1}}} \quad \mbox{for} \quad 0\le k \le m.
		\end{align*}
Here $(F)_{i_{j+1}} \in \mathcal{E}'(\mathbf{S}^m)$ is given by	
\[ 	(F)_{i_{j+1}}= \sum_{p=0}^{m}(p+1)\,f^{(p+1)}_{i_1\cdots i_{p}i_{j+1}}\,  \D x^{i_1} \cdots \D x^{i_{p}}=  \sum_{p=0}^{m}\left(\wt{f_{i_{j+1}}^{(p)}}\right)_{i_1\cdots i_{p}}\,  \D x^{i_1} \cdots \D x^{i_{p}},\]
	where $\left(\wt{f_{i_{j+1}}^{(p)}}\right)_{i_1\cdots i_{p}} =  (p+1)\,f^{(p+1)}_{i_1\cdots i_{p}i_{j+1}}$.	 
		Now using induction hypothesis we have that
		\begin{align*}
			I^0\left(\wt{f_{i_{j+1}}^{(j)}}\right)_{i_1\cdots i_{j}}&=(-1)^{m-j}\frac{\sigma(i_1\dots i_{j})}{(m-j)!\,(j)!} P_{m-j}(\xi,\PD_{\xi}) \sum\limits_{k=0}^{j}\,(-1)^k \binom{j}{k}\frac{\partial^{j} I^{m,k}(F)_{i_{j+1}} }
			{\partial x_{i_1}\dots\partial x_{i_k}\partial\xi_{i_{k+1}}\dots\partial\xi_{i_{j}}}.
		\end{align*}
		We replace $ \sigma(i_1\dots i_{j}) $ by the stronger operator $\sigma(i_1\dots i_{j+1})$ because the left hand side is symmetric with respect to all indices. Therefore we have
		\begin{equation}\label{Eq1.6}
			\begin{aligned}
				I^0f^{(j+1)}_{i_1\cdots i_{j}i_{j+1}}&=	\frac{(-1)^{m-j}\sigma(i_1\dots i_{j+1})}{(m-j)!\,(j+1)!}P_{m-j}(\xi,\PD_{\xi})\\&\bigg[  \sum\limits_{k=0}^{j}\,(-1)^k \binom{j}{k}\frac{\partial^{j+1}{I^{m+1,k}F}}
				{\partial x_{i_1}\dots\partial x_{i_k}\partial\xi_{i_{k+1}}\dots\partial\xi_{i_{j+1}}}- \sum\limits_{k=0}^{j}\,(-1)^k \binom{j}{k}\frac{\partial^{j+1}{I^{m+1,k+1}F}}
				{\partial x_{i_1}\dots\partial x_{i_k} x_{i_{j+1}}\partial\xi_{i_{k+1}}\dots\partial\xi_{i_{j}}}
				\bigg].
			\end{aligned}
		\end{equation}
		Using the arguments used in \cite[Theorem 3.1]{KMSS} we can write
		\begin{align*}
			&\bigg[  \sum\limits_{k=0}^{j}\,(-1)^k \binom{j}{k}\frac{\partial^{j+1}{I^{m+1,k}F}}
			{\partial x_{i_1}\dots\partial x_{i_k}\partial\xi_{i_{k+1}}\dots\partial\xi_{i_{j+1}}}- \sum\limits_{k=0}^{j}\,(-1)^k \binom{j}{k}\frac{\partial^{j+1}{I^{k+1}F}}
			{\partial x_{i_1}\dots\partial x_{i_k} x_{i_{j+1}}\partial\xi_{i_{k+1}}\dots\partial\xi_{i_{j}}}
			\bigg] \\& = \sum\limits_{k=0}^{j+1}\,(-1)^k \binom{j+1}{k}\frac{\partial^{j+1}(I^{m+1,k+1}\!F)}
			{\partial x^{i_1}\dots\partial x^{i_k}\partial\xi^{i_{k+1}}\dots\partial\xi^{i_{j+1}}}. 
		\end{align*}
		Now from \eqref{Eq1.6} we obtain 
		\begin{align*}
			&I^0f^{(j+1)}_{i_1\dots i_{j} i_{j+1}}\\&= \frac{(-1)^{m+1-j-1}\sigma(i_1\dots i_{j+1})}{(m+1-j-1)!\,(j+1)!}P_{m+1-j-1}(\xi,\PD_{\xi})\sum\limits_{k=0}^{j+1}\,(-1)^k \binom{j+1}{k}\frac{\partial^{j+1}(I^{m+1,k+1}\!F)}
			{\partial x^{i_1}\dots\partial x^{i_k}\partial\xi^{i_{k+1}}\dots\partial\xi^{i_{j+1}}}.
		\end{align*}
		Thus using induction hypothesis we prove \eqref{Eq1.3} for any  $ m $ and all $j$ with $ 1\le j\le m $  with $F\in \mathcal{E}'(\mathbf{S}^m) $. 
		This completes the proof of Theorem \ref{Th1.1}.
	\end{proof}

	\begin{remark}\label{partial_mrt}
	    We have shown that if $F\in \mathcal{E}'(\mathbf{S}^m)$ is such that $I^{m,l}F=0$ for $l=0,1,\cdots,k$ with $k<m$, then Theorem \ref{Th1.1} implies $ f^{(j)}=0 $ for $j=0,1,\cdots,k$. It is interesting to study the kernel description of operator $F \mapsto (I^{m,0}F,\cdots,I^{m,k}F)$ for $k<m$ similar to \cite{Rohit_Suman}. 
	\end{remark}

\subsection{MRT on the unit sphere bundle of $\Rb^n$}
Recall that in Section \ref{Section_coeff_deter}, we have MRT of a sum of different rank tensor fields only over the unit sphere bundle. Therefore in this section, we analyze such MRT. Due to this restriction, this transform will have a non-trivial kernel as we show below, in contrast to the situation in Theorem \ref{uniquenes_result_mrt}.

Let $J^{m,k}F(x,\xi)= I^{m,k}F(x,\xi)\big|_{\Rn \times \Sn}$. We study the operator  
$F \mapsto (J^{m,0}F,\cdots,J^{m,m}F)$ and show that it has a non-trivial kernel for $m\ge 2$.
Recall the operator $i_{\d}:S^{m} \rightarrow S^{m+2}$,
\begin{align}\label{def_i_d}
(i_{\d}f)^{(m+2)}= (i_{\d} f^{(m)})_{i_1\cdots i_{m+2}}=  \sigma(i_1\dots i_{m+2}) f^{(m)}_{i_1\dots i_{m}}\d_{i_{m+1}i_{m+2}}.
\end{align}
Similarly, $i^p_{\d}:S^{m} \rightarrow S^{m+2p}$ is defined by taking composition of $i_{\d}$ with itself $p$ times. 
\begin{proposition}\label{prop_1}
   Let $f^{(l)}\in S^l$. Then $i^p_{\d}f^{(l)}$ satisfies the following relation
   \begin{align}\label{increasing_order}
       J^k (i^p_{\d}f^{(l)})(x,\xi) = J^k f^{(l)}(x,\xi), \quad \mbox{for any integer} \quad p\ge 1.
   \end{align}
\end{proposition}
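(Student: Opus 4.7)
The plan is to observe that the entire content of Proposition~\ref{prop_1} is the algebraic identity
\[
(i_{\delta}f^{(l)})_{i_1\cdots i_{l+2}}\,\xi^{i_1}\cdots\xi^{i_{l+2}} \;=\; |\xi|^{2}\; f^{(l)}_{i_1\cdots i_l}\,\xi^{i_1}\cdots\xi^{i_l},
\]
iterated $p$ times, together with the fact that $|\xi|^{2}=1$ on the unit sphere bundle. So the proof is essentially a one-line computation, and the only thing that needs justification is the vanishing of the symmetrization in the definition \eqref{def_i_d} of $i_{\delta}$ when contracted against a fully symmetric product of $\xi$'s.

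Concretely, I would first point out that since $\xi^{i_1}\cdots\xi^{i_{l+2}}$ is totally symmetric in its indices, the symmetrizer $\sigma(i_1\dots i_{l+2})$ in the definition of $i_{\delta} f^{(l)}$ acts trivially under contraction. Therefore
\[
(i_{\delta} f^{(l)})_{i_1\cdots i_{l+2}}\,\xi^{i_1}\cdots\xi^{i_{l+2}}
\;=\; f^{(l)}_{i_1\cdots i_l}\,\delta_{i_{l+1}i_{l+2}}\,\xi^{i_1}\cdots\xi^{i_{l+2}}
\;=\; |\xi|^{2}\; f^{(l)}_{i_1\cdots i_l}\,\xi^{i_1}\cdots\xi^{i_l}.
\]
Substituting this into the definition \eqref{Eq4.2} of $I^{k}$ and pulling the constant $|\xi|^{2}$ outside the $t$-integral gives $I^{k}(i_{\delta} f^{(l)})(x,\xi) = |\xi|^{2}\, I^{k}f^{(l)}(x,\xi)$ on all of $\mathbb{R}^n \times (\mathbb{R}^n\setminus\{0\})$. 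Iterating $p$ times yields $I^{k}(i^{p}_{\delta} f^{(l)})(x,\xi) = |\xi|^{2p}\, I^{k} f^{(l)}(x,\xi)$.

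Finally, restricting both sides to $\mathbb{R}^n \times \mathbb{S}^{n-1}$ and using $|\xi|=1$ there produces the claim $J^{k}(i^{p}_{\delta} f^{(l)}) = J^{k} f^{(l)}$. There is no genuine obstacle here; the content of the proposition is simply that the symmetric multiplication by $\delta$ contributes a factor of $|\xi|^{2}$, which is invisible on the unit sphere bundle. Consequently, $i^{p}_{\delta}$ produces an $l$-parameter family of elements sitting in the kernel of the difference $J^{k}\circ i_{\delta} - J^{k}$, which is precisely the obstruction to injectivity alluded to before the proposition and which motivates the iterative scheme used in Section~\ref{Section_coeff_deter}.
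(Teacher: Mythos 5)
Your proof is correct and is essentially the paper's own argument: the paper dismisses this as a ``straightforward computation keeping in mind that $\xi\in\Sb^{n-1}$'', and your write-up is exactly that computation spelled out (the normalized symmetrizer $\sigma$ acts trivially against the symmetric tensor $\xi^{i_1}\cdots\xi^{i_{l+2}}$, each application of $i_\delta$ contributes a factor $|\xi|^2$, and this factor is invisible on the unit sphere bundle). Nothing further is needed.
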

\bpr
This follows by straightforward computation keeping in mind that $\xi\in \Sb^{n-1}$.
\epr

\begin{theorem}\label{Th_MRT_unit_disk}
Let $m\ge 2$ and $F =\sum\limits_{l=0}^{m} f^{(l)} \in \Sm^m$. 
Then  
\[J^{m,k}F(x,\xi)=0  \quad \mbox{for} \quad k=0,1,\cdots,m,\]
if and only if 
\begin{align*}\label{even_highest_component}
      f^{(2[\frac{m}{2}])}
=- i_{\d}\left( \sum\limits_{l=1}^{[\frac{m}{2}]} i^{[\frac{m}{2}]-l}_{\d} f^{(2l-2)}\right),
    \end{align*}
and 
\begin{align*}
    f^{(2[\frac{m-1}{2}]+1)}&= -i_{\d}\left( \sum\limits_{l=1}^{[\frac{m-1}{2}]} i^{[\frac{m}{2}]-l}_{\d} f^{(2l-1)}\right).
\end{align*}
Moreover, $J^{m,k}F=0$ if and only if $F=0$ for $m=0,1$.
    \end{theorem}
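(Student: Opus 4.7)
The plan is to reduce the theorem to the single-tensor injectivity statement in Remark \ref{mrt_uniq_for_single_tensor} via two successive simplifications: first a parity split of $F$, and then an aggregation within each parity class using the operator $i_\delta$ from \eqref{def_i_d}.

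First I would split into even-rank and odd-rank parts. Set $F_{\mathrm{ev}} = \sum_{l \text{ even}} f^{(l)}$ and $F_{\mathrm{od}} = \sum_{l \text{ odd}} f^{(l)}$. The change of variables $\xi \mapsto -\xi$ and $t \mapsto -t$ in \eqref{Eq4.1} yields
\[
J^{m,k}F(x,-\xi) = (-1)^k\bigl(J^{m,k}F_{\mathrm{ev}}(x,\xi) - J^{m,k}F_{\mathrm{od}}(x,\xi)\bigr),
\]
which, combined with $J^{m,k}F(x,\xi) = J^{m,k}F_{\mathrm{ev}}(x,\xi) + J^{m,k}F_{\mathrm{od}}(x,\xi)$ and the vanishing hypothesis evaluated at both $\pm\xi$, forces $J^{m,k}F_{\mathrm{ev}} = 0 = J^{m,k}F_{\mathrm{od}}$ on $\Rn \times \Sn$ for every $k = 0, 1, \dots, m$. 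Hence it suffices to analyze each parity class separately.

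Next I would reduce each parity class to a single tensor. For the even part, set $K = [m/2]$ and define the rank-$2K$ symmetric tensor
\[
\tilde{f}_{\mathrm{ev}} := \sum_{l=0}^{K} i_\delta^{\,K-l}\, f^{(2l)}.
\]
Since $|\xi|=1$, Proposition \ref{prop_1} gives $J^k(i_\delta^p g) = J^k g$ for every nonnegative integer $p$, so $J^k \tilde{f}_{\mathrm{ev}}(x,\xi) = J^{m,k} F_{\mathrm{ev}}(x,\xi)$ on $\Rn \times \Sn$. Therefore $J^k \tilde{f}_{\mathrm{ev}} = 0$ on the sphere bundle for $k = 0, \dots, m$. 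Because $I^k \tilde{f}_{\mathrm{ev}}$ is homogeneous of degree $2K-k-1$ in $\xi$ by \eqref{Eq4.10}, vanishing on the unit sphere extends (via a scaling argument using Definition \ref{action_of_taul_distributions}) to all of $\Rn \times (\Rn\setminus\{0\})$. Since $2K \leq m$, in particular $I^k \tilde{f}_{\mathrm{ev}} = 0$ for $k = 0, \dots, 2K$, and Remark \ref{mrt_uniq_for_single_tensor} forces $\tilde{f}_{\mathrm{ev}} = 0$. Solving for $f^{(2K)}$ isolates exactly the first displayed identity in the statement. The odd case is identical with $K' = [(m-1)/2]$ and $\tilde{f}_{\mathrm{od}} = \sum_{l=0}^{K'} i_\delta^{\,K'-l}\, f^{(2l+1)}$, yielding the second identity.

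The converse (``if'') direction is then immediate: assuming the two stated identities, Proposition \ref{prop_1} makes the sums $\sum_{l \text{ even}} J^k f^{(l)}$ and $\sum_{l \text{ odd}} J^k f^{(l)}$ telescope to $J^k \tilde{f}_{\mathrm{ev}}$ and $J^k \tilde{f}_{\mathrm{od}}$ on the sphere, and both vanish because the identities are precisely $\tilde{f}_{\mathrm{ev}} = 0 = \tilde{f}_{\mathrm{od}}$. For $m \in \{0,1\}$ the aggregation is trivial (no lower tensor of the same parity is available), so the reduction delivers $\tilde{f}_{\mathrm{ev}} = f^{(0)}$ and, when $m=1$, $\tilde{f}_{\mathrm{od}} = f^{(1)}$, and Remark \ref{mrt_uniq_for_single_tensor} yields $F = 0$ with no residual kernel, as asserted. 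The main obstacle I anticipate is the distributional extension from $\Rn \times \Sn$ to $\Rn \times (\Rn\setminus\{0\})$ via homogeneity: I would handle it by testing against a dyadic partition of unity in $|\xi|$ and using the scaling action $\tau_\lambda$ of Definition \ref{action_of_taul_distributions} to convert vanishing on the unit sphere into vanishing on each dyadic shell.
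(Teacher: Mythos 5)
Your proposal is correct and follows essentially the same route as the paper: the parity split via evaluating $J^{m,k}F$ at $\pm\xi$, aggregation of each parity class into a single tensor $G_i$ using $i_\delta$ and Proposition \ref{prop_1}, extension from the sphere bundle to the full tangent bundle by homogeneity (the paper cites \cite[Equation 2.6]{KMSS} for this step, which is exactly your scaling argument), and then Remark \ref{mrt_uniq_for_single_tensor} to conclude $G_1=G_2=0$. No substantive differences.
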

\begin{proof}
First assume $m\geq 2$. We have that $J^{m,k}F(x,\xi)=0$ for all $(x,\xi)\in \Rn \times \Sn $. This implies $J^{m,k}F (x,\pm\xi)=0$. From \eqref{Eq4.1} we have 
\begin{align*}
    J^{m,k}F(x,-\xi)= \sum\limits_{l=0}^{m}(-1)^{l-k} J^{k}f^{(l)}(x,\xi) \quad \mbox{for} \quad k=0,1,\cdots,m.
\end{align*}
This implies for each $\xi \in \Sb^{n-1}$ we have
\begin{equation}\label{add_sub_relation}
\begin{aligned}
J^{m,k}F(x,\xi)\pm J^{m,k}F(x,-\xi) &=\sum\limits_{l=0}^{m}(1 \pm (-1)^{l-k}) J^{k}f^{(l)}(x,\xi)\quad \mbox{for} \quad k=0,1,\cdots,m.
\end{aligned}
\end{equation}
Since the left hand side of \eqref{add_sub_relation} is $0$ by assumption, 
we obtain
\begin{align}\label{separating_relation_of_mrt}
    \sum\limits_{l=0}^{[\frac{m}{2}]} J^{k}f^{(2l)}(x,\xi)= 0= \sum\limits_{l=0}^{[\frac{m-1}{2}]} J^{k}f^{(2l+1)}(x,\xi) \quad \mbox{for} \quad k=0,1,\cdots,m.
\end{align}
Let us now define the tensor fields
\begin{equation}
\begin{aligned}
     G_{1}:= \sum\limits_{l=0}^{[\frac{m}{2}]} i^{[\frac{m}{2}]-l}_{\d} f^{(2l)} \qquad \mbox{and}\qquad 
    G_{2}:= \sum\limits_{l=0}^{[\frac{m-1}{2}]} i^{[\frac{m-1}{2}]-l}_{\d} f^{(2l+1)}.
\end{aligned}
\end{equation}
Combining Proposition \ref{prop_1} and \eqref{separating_relation_of_mrt}, we obtain
\begin{align}\label{mrt_of_g_1_g_2}
    J^k G_1(x,\xi)=0 \quad \mbox{and} \quad J^k G_2(x,\xi)=0,
\end{align}
for $k=0,1,\cdots,m$ and $(x,\xi)\in \Rn\times 
\Sn$. 

Note that $G_1$ and $G_2$ are a symmetric tensor field of order at most $m$. From \cite[Equation 2.6]{KMSS} we get 
$J^kG_i=0$ implies  $I^{k} G_{i}=0$ for $i=1,2$ and for $0\leq k\leq m$. From Theorem \ref{uniquenes_result_mrt} (in fact, Remark \ref{mrt_uniq_for_single_tensor}) we have
$G_1=G_2=0$ in $\Rn$. 

This  implies 
\begin{align}\label{relation_for_g_1}
f^{(2[\frac{m}{2}])}
= -\sum\limits_{l=0}^{[\frac{m}{2}]-1} i^{[\frac{m}{2}]-l}_{\d} f^{(2l)} 
=- i_{\d}\left( \sum\limits_{l=0}^{[\frac{m}{2}]-1} i^{[\frac{m}{2}]-l-1}_{\d} f^{(2l)}\right)
=- i_{\d}\left( \sum\limits_{l=1}^{[\frac{m}{2}]} i^{[\frac{m}{2}]-l}_{\d} f^{(2l-2)}\right).
\end{align}
Similarly,
\begin{align}\label{relation_for_g_2}
     f^{(2[\frac{m-1}{2}]+1)}&= -i_{\d}\left( \sum\limits_{l=1}^{[\frac{m-1}{2}]} i^{[\frac{m}{2}]-l}_{\d} f^{(2l-1)}\right).
\end{align}
Thus we have shown that if $ J^{m,k}F=0$ for $k=0,1,\cdots,m$ then $f^{(2[\frac{m}{2}])}$ and $f^{(2[\frac{m-1}{2}]+1)}$ can be written as \eqref{relation_for_g_1} and \eqref{relation_for_g_2} respectively.

For the other implication, assume that $f^{(2[\frac{m}{2}])}$ and $f^{(2[\frac{m-1}{2}]+1)}$ can be written as above. Then the fact that $J^{m,k} F=0$ for $0\le k\le m$ follows from a direct computation using Proposition \ref{prop_1}. 

For $m=0,1$,  $F=0$ implies $ J^{m,k}F=0$.
 Suppose $m=1$ and $J^{1,k}F(x,\xi) = 0$, for $k=0,1$, where $F=f^{(0)}+ f^{(1)}_i \D x^i$. 
     Then using homogeneity in the second variable we have that $J^{1,k}f^{(0)}=J^{1,k}f^{(1)}=0 $ for $k=0,1$. From \cite[Equation 2.6]{KMSS} we get  $I^{1,k}f^{(0)}=I^{1,k}f^{(1)}=0 $ for $k=0,1$. Using Remark \ref{mrt_uniq_for_single_tensor} we have $f^{(0)}=f^{(1)}=0$. This implies $F=0$. The case $m=0$ is similar, in fact, simpler. 
    This completes the proof.
\end{proof}

Similar to Theorem \ref{mrt_coro_full}, for the case of unit sphere bundle we have the following result showing uniqueness using only the highest order moment.
\begin{theorem}\label{mrt_coro}
	    Let $F \in \mathcal{E}'(\mathbf{S}^m)$ and $J^{m,m} F(x,\xi) =0$ for all $(x,\xi)\in \R^n \times \Sb^{n-1}$, then we have the same conclusion as in Theorem \ref{Th_MRT_unit_disk}. 
	\end{theorem}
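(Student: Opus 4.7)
The key observation is that Lemma \ref{translation_lemma} can be transported to the unit sphere bundle setting essentially for free. The identity
\[
\langle \xi, \tfrac{\partial}{\partial x}\rangle^{p} I^{m,k}F \;=\; (-1)^{p}\binom{k}{p}p!\, I^{m,k-p}F \qquad (p\le k)
\]
involves only differentiation in the $x$ variable, with $\xi$ appearing as a coefficient. Since neither the operator nor the identity mixes $\xi$ variables with each other, the identity restricts coordinatewise to $\mathbb{R}^n \times \mathbb{S}^{n-1}$, giving
\[
\langle \xi, \tfrac{\partial}{\partial x}\rangle^{p} J^{m,k}F \;=\; (-1)^{p}\binom{k}{p}p!\, J^{m,k-p}F \qquad \text{on } \mathbb{R}^n \times \mathbb{S}^{n-1},
\]
in the sense of distributions. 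This is exactly the analogue on the sphere bundle that we need, and it is the same trick used to derive Theorem \ref{mrt_coro_full} from Theorem \ref{uniquenes_result_mrt}.

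With this in hand, the plan is immediate. Assume $J^{m,m}F(x,\xi)=0$ for all $(x,\xi)\in\mathbb{R}^n\times\mathbb{S}^{n-1}$. Applying $\langle\xi,\partial/\partial x\rangle^{p}$ with $p=0,1,\dots,m$ to this identity and using the restricted version of Lemma \ref{translation_lemma} yields
\[
J^{m,m-p}F \;=\; 0 \quad \text{on } \mathbb{R}^n \times \mathbb{S}^{n-1}, \qquad p=0,1,\dots,m,
\]
which is precisely the statement that $J^{m,k}F=0$ for all $k=0,1,\dots,m$. Now Theorem \ref{Th_MRT_unit_disk} applies and delivers the stated conclusion: for $m\ge 2$, the components $f^{(2[m/2])}$ and $f^{(2[(m-1)/2]+1)}$ are expressible in terms of the lower rank components via $i_\delta$, and for $m=0,1$ one has $F=0$.

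The conceptual work is minor and essentially bookkeeping; the only step requiring any care is justifying the restriction of Lemma \ref{translation_lemma} to the unit sphere bundle, which I expect to be the main (mild) obstacle. Since $J^{m,k}F$ is defined as the restriction of the distribution $I^{m,k}F$, one can either (i) test against smooth functions supported in a neighborhood of $\mathbb{R}^n\times\mathbb{S}^{n-1}$ that are constant in the radial direction and observe that the differential operator $\langle\xi,\partial/\partial x\rangle^{p}$ preserves this structure, or (ii) write the identity on $\mathbb{R}^n\times(\mathbb{R}^n\setminus\{0\})$ and pull back by the inclusion $\mathbb{R}^n\times\mathbb{S}^{n-1}\hookrightarrow\mathbb{R}^n\times(\mathbb{R}^n\setminus\{0\})$, which is allowed since the operator $\langle\xi,\partial/\partial x\rangle^{p}$ is tangential to leaves of constant $|\xi|$. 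Either verification is routine. Once this is done, the invocation of Theorem \ref{Th_MRT_unit_disk} completes the proof.
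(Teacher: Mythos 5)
Your proposal is correct and follows essentially the same route as the paper: restrict the identity of Lemma \ref{translation_lemma} to the unit sphere bundle (valid because $\langle\xi,\partial/\partial x\rangle^{p}$ differentiates only in $x$ and is tangential to the leaves of constant $|\xi|$), apply it with $k=m$ and $p=0,\dots,m$ to get $J^{m,k}F=0$ for all $k$, and then invoke Theorem \ref{Th_MRT_unit_disk}. Your write-up simply spells out the restriction step that the paper leaves implicit.
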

	\bpr 
	The proof follows from restricting Lemma \ref{translation_lemma} to the unit sphere bundle of $\Rb^n$ and then combining it with Theorem \ref{Th_MRT_unit_disk}.
	\epr

\begin{remark}
   We know that any symmetric $m$ tensor field in $\Rn$ has $\binom{m+n-1}{m}$ distinct components. Suppose $ F\in \mathcal{E}'(\mathbf{S}^m)$ as in \eqref{definition_of_F_m}. Then $F$ has a total of $ \sum\limits_{l=0}^m \binom{l+n-1}{l}= \binom{m+n}{m}$ distinct components. Therefore recovery of $F$ in $\mathbb{R}^n$ is, in effect, equivalent to recovering a symmetric  $m$ tensor field in  $\mathbb{R}^{n+1}$ from the knowledge of MRT. However to recover a symmetric $m$ tensor field in $\mathbb{R}^{n+1}$,  first $m+1$ MRT are  required. 
   In this section we have the first $m+1$ MRT of $F$ on a $2n-1$ dimensional data set. Thus based on this information alone, one cannot  recover the entire field $F$; see Theorem \ref{Th_MRT_unit_disk}.  However having MRT on a $2n$ dimensional set  leads to unique recovery of the entire $F$; see Theorem \ref{uniquenes_result_mrt}. 
   
 
\end{remark}


\appendix
\section{Semiclassical elliptic regularity}\label{Appendix_elliptic_regu}
Let $\Omega\subset \Rb^n$ be an open bounded domain and $\partial\Omega$ be smooth. Let $a_{jk}(x)$, $b_j(x)$ and $c(x)$ be smooth functions on $\overline{\Omega}$ such that $\langle a_{jk}\xi,\xi\rangle \geq \theta(x)|\xi|^2$ for all $\xi \in \Rb^n\setminus\{0\}$, where $\theta(x)>\ve_0>0$ in $\Omega$.
We define the following general second order elliptic operator
\[ L(x,D) := -\sum_{j,k=1}^{n} \frac{\partial}{\partial x^j} \left(a_{jk}(x)\frac{\partial}{\partial x^k}\right) + \sum_{j=1}^{n} b_{j}(x)\frac{\partial}{\partial x^j} + c(x),\qquad \mbox{in }\Omega.
\]
We prove the following semiclassical regularity estimate. Probably this result is well known but we could not find an easily available reference.
The proof given here follows the classical elliptic regularity estimate in \cite[Theorem 5, page 323]{Evans_pde_book} very closely.
\begin{proposition}\label{Prop_ellip_regu}
Let $L(x,D)$ be defined in $\Omega$ as above.
Let $0\leq j\leq 2m-2$ and  $0<h\ll 1$.  For $u \in H^{j+2}(\Omega) \cap H^1_0(\Omega)$, we have the following estimate
\begin{equation}\label{ellip_regu}
\| u \|_{H^{j+2}_{\mathrm{scl}}(\Omega)} 
\leq C \left(\|(-h^2 L  u\|_{H^{j}_{\mathrm{scl}}(\Omega)} + \|u\|_{L^2(\Omega)}\right).
\end{equation}
\end{proposition}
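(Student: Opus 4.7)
My plan is to reduce the semiclassical estimate to the classical elliptic regularity estimate via a simple rescaling, followed by an interpolation step to recover all the intermediate semiclassical norms. The key input I would take for granted is the classical elliptic regularity for second-order divergence-form elliptic operators with smooth coefficients on a smooth bounded domain (see e.g. \cite[Theorem 5, p.\ 323]{Evans_pde_book}), namely, for every integer $j\geq 0$ and $u\in H^{j+2}(\Omega)\cap H^1_0(\Omega)$,
\[
\|u\|_{H^{j+2}(\Omega)}\leq C\bigl(\|Lu\|_{H^j(\Omega)}+\|u\|_{L^2(\Omega)}\bigr),
\]
with $C$ depending on the coefficients of $L$, on $j$, and on $\Omega$, but not on $h$.

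The first step is to extract from the classical estimate an estimate on only the top-order derivative in a semiclassical-friendly form. Writing the right-hand side as a sum over $|\beta|\leq j$ of $\|D^{\beta}Lu\|_{L^2(\Omega)}$, I would multiply by $h^{j+2}$ and rewrite the factor $h^{j+2}$ as $h^{j-|\beta|}\cdot h^{|\beta|}\cdot h^{2}$. Since $h^{j-|\beta|}\leq 1$ for $0\leq|\beta|\leq j$ and $0<h\ll 1$, one obtains
\[
h^{j+2}\,\|D^{j+2}u\|_{L^2(\Omega)} \leq C\sum_{|\beta|\leq j}\bigl\|h^{|\beta|}D^{\beta}(h^{2}Lu)\bigr\|_{L^2(\Omega)}+C\,\|u\|_{L^2(\Omega)}\leq C\,\|(-h^{2}L)u\|_{H^{j}_{\mathrm{scl}}(\Omega)}+C\,\|u\|_{L^2(\Omega)}.
\]

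The second step is to upgrade this top-order estimate to the full semiclassical $H^{j+2}_{\mathrm{scl}}$-norm. For this I would invoke the standard interpolation inequality $\|D^{k}u\|_{L^2}\leq C\|D^{j+2}u\|_{L^2}^{k/(j+2)}\|u\|_{L^2}^{1-k/(j+2)}$ for $0\leq k\leq j+2$; multiplying by $h^{k}$, writing $h^{k}=(h^{j+2})^{k/(j+2)}$, and applying Young's inequality with exponents $(j+2)/k$ and $(j+2)/(j+2-k)$ gives
\[
h^{k}\|D^{k}u\|_{L^2(\Omega)}\leq C\bigl(h^{j+2}\|D^{j+2}u\|_{L^2(\Omega)}+\|u\|_{L^2(\Omega)}\bigr),\qquad 0\leq k\leq j+2.
\]
Summing over $k$ and combining with the top-order estimate from Step 1 yields exactly \eqref{ellip_regu}.

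There is really no major obstacle here: the work is all done by the classical estimate. The only minor subtlety is to make sure the $h$-powers are tracked correctly and that $h^{j-|\beta|}\leq 1$ is used only where $j-|\beta|\geq 0$, which is the reason the statement requires $j\leq 2m-2$ only insofar as this is the range used in Section~\ref{Sec_Prelim}; the estimate itself is valid for all $j\geq 0$. All constants are independent of $h$ since neither the classical elliptic estimate nor the interpolation inequality sees the semiclassical parameter.
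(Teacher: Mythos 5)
Your proof is correct, but it takes a genuinely different route from the paper. The paper re-runs the classical regularity bootstrap inside the semiclassical framework: it first proves the case $j=0$ by multiplying the classical estimate by $h^2$ and controlling the gradient term via integration by parts, $\sum_i\|hD_iu\|^2=\sum_i\langle h^2D_i^2u,u\rangle$ (using $u|_{\partial\Omega}=0$), and then performs a double induction -- on the order $l$, applying the base case to $\widetilde u=h^{l+1}D^{\alpha}u$ for tangential multi-indices $\alpha$, and on the number of normal derivatives, solving for $D_n^2$-derivatives from the equation via the ellipticity $a_{nn}\geq\theta>0$. You instead treat the classical estimate as a black box, rescale it so that it controls only the top-order term $h^{j+2}\|D^{j+2}u\|_{L^2}$ by $\|h^2Lu\|_{H^j_{\mathrm{scl}}}+\|u\|_{L^2}$ (the bookkeeping $h^{j+2}=h^{j-|\beta|}h^{|\beta|}h^2$ with $h^{j-|\beta|}\leq 1$ is exactly right), and then recover the intermediate semiclassical norms by Gagliardo--Nirenberg interpolation plus Young's inequality. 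This is shorter and avoids re-tracking boundary conditions of derivatives of $u$; what the paper's argument buys is self-containedness, since it only ever invokes the $H^2$-regularity theory rather than the full higher-order statement, and it makes explicit where $u\in H^1_0$ and the ellipticity enter. One small point to make precise: on a bounded domain the multiplicative interpolation inequality $\|D^ku\|_{L^2}\leq C\|D^{j+2}u\|_{L^2}^{k/(j+2)}\|u\|_{L^2}^{1-k/(j+2)}$ does not hold verbatim; the correct form carries an additional $+C\|u\|_{L^2}$ on the right (or uses the full $\|u\|_{H^{j+2}}$ in place of $\|D^{j+2}u\|$). This extra term is harmless for your argument since after multiplying by $h^k\leq 1$ it is absorbed into the $\|u\|_{L^2}$ already present, so the conclusion stands.
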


\begin{proof}
We recall the classical elliptic regularity estimate (\cite[Theorem 5]{Evans_pde_book},  \cite[Theorem 8.13]{Gilbarg_Trudinger} ) for $u \in H^{j+2}(\Omega)\cap H^1_0(\Omega)$:
\begin{equation}\label{ellip_regu_1}
	\|u\|_{H^{j+2}(\Omega)} \leq C\left(\|L u\|_{H^{j}(\Omega)} + \|u\|_{L^2(\Omega)}\right), \quad \mbox{for all}\quad  0\leq j\leq 2m-2.
\end{equation}
Using this, we prove estimate \eqref{ellip_regu} for the case $j=0$.
 From \eqref{ellip_regu_1}, we have, with $j=0$, 
\[
\begin{aligned}
\sum_{i,j=1}^{n} \left\| D_{ij} u\right\|_{L^2(\Omega)} 
\leq& C\left(\|L u\|_{L^2(\Omega)} + \|u\|_{L^2(\Omega)}\right),\\
\sum_{i=1}^{n} \left\| D_{i} u\right\|_{L^2(\Omega)} 
\leq& C\left(\|L u\|_{L^2(\Omega)}+\|u\|_{L^2(\Omega)}\right),
\end{aligned} 
\]
This implies
\begin{equation}\label{ellip_1}
    \sum_{i,j=1}^{n} \left\| h^2D_{ij} u\right\|_{L^2(\Omega)} \leq C\left(\|h^2 L (x,D)u\|_{L^2(\Omega)} + \|u\|_{L^2(\Omega)}\right) \mbox{ since } h\ll 1.
\end{equation}

On the other hand, using integration by parts, we see
\begin{equation}\label{ellip_2}\begin{aligned}
\sum_{i=1}^{n} \left\| hD_{i} u\right\|^2_{L^2(\Omega)}
=& \sum_{i=1}^{n}\left\langle hD_i u,hD_i u\right\rangle_{L^2(\Omega)}\\
=& \sum_{i=1}^{n}\left\langle h^2D^2_i u, u\right\rangle_{L^2(\Omega)}, \quad &&(\mbox{using }u|_{\partial\Omega}=0)\\
\leq& C \lb \sum\limits_{i=1}^{n}\lVert h^2 D_i^2 u\rVert^2 + \lVert u \rVert^2\rb \\
\leq& C\left(\|h^2 L u \|^2_{L^2(\Omega)} + \|u \|^2_{L^2(\Omega)}\right), \quad &&\mbox{(using \eqref{ellip_1})}.
\end{aligned}\end{equation}
Therefore, summing up \eqref{ellip_1} and \eqref{ellip_2} we have
\begin{equation}\label{step_0}
\|u\|_{H^2_{\mathrm{scl}}(\Omega)} \leq C\left(\|h^2 L u \|_{L^2(\Omega)} + \|u \|_{L^2(\Omega)}\right).
\end{equation}

Next, for $0<l<2m-2$,  assume by induction that 
\[	\|u\|_{H^{l+2}_{\mathrm{scl}}(\Omega)} 
	\leq C\left(\|h^2 L u \|_{H^{l}_{\mathrm{scl}}(\Omega)} + \|u \|_{L^2(\Omega)}\right),\quad \mbox{ for } u\in H^{l+2}(\Omega)\cap H^1_{0}(\Omega),
\]

We show that the estimate is true for $l+1$ with $u\in H^{l+3}(\O) \cap H^1_0(\O)$. 
Let $\A$ be a multi-index such that $|\A|=l+1$ and $\A_n=0$.
Denote $\wt{u}=h^{l+1}D^{\A}u$. Note that $\wt{u}$  satisfies $\wt{u}|_{\PD \O}=0$ since $\A_n=0$, and $\wt{u}\in H^2(\O)$, since $u\in H^{l+3}(\O)$.  We have 
\[\begin{gathered}
	h^2 L \wt{u} = \wt{f} \mbox{ in } \O, \mbox{ where }\\
	\wt{f} 
	= h^{l+3}D^{\A}\left(L u\right) 
	+\sum_{\substack{\B\leq\A\\ \B\neq\A}}h^{l+3} {\A \choose \B}\Bigg{\{}\sum\limits_{i,j=1}^{n}\lb (D^{\A-\B}a^{ij}) D^{\B}(D_ju)\rb_{x_{j}} - (D^{\A-\B}b^j)D^{\B}(D_ju)- (D^{\A-\B}c) D^{\B}u\Bigg{\}}.	
\end{gathered}\]

We have that 
\[
\lVert \wt{f}\rVert_{L^2(\O)}\leq C \lb \lVert h^2 L u\rVert_{H^{l+1}_{\mathrm{scl}}(\O)}+h\lVert u\rVert_{H^{l+2}_{\mathrm{scl}}(\O)}\rb\leq C\lb \lVert h^2 L u\rVert_{H^{l+1}_{\mathrm{scl}}(\O)}+\lVert u\rVert_{L^2(\O)}\rb.
\]
The last inequality follows from the induction assumption.

Since $\wt{u}\in H^2(\O) \cap H^1_0(\O)$, we have from \eqref{step_0} that 
\[
\lVert \wt{u}\rVert_{H^2_{\mathrm{scl}}(\O)} \leq C \lb \lVert \wt{f}\rVert_{L^2(\O)}+ \lVert \wt{u}\rVert_{L^2(\O)}\rb.
\]
Combining with the induction assumption, we get, 
\[
\lVert \wt{u}\rVert_{H^2_{\mathrm{scl}}(\O)}\leq C \lb \lVert h^2 L u\rVert_{H^{l+1}_{\mathrm{scl}}(\O)}+ \lVert u\rVert_{L^2(\O)}\rb.
\]
Therefore, for any $\B$ with $|\B|=l+3$ and $\B_n= 0,1 \mbox{ or } 2$, we have that 
\[
\lVert h^{|\B|}D^{\B} u\rVert_{L^2(\O)}\leq C \lb \lVert h^2 L u\rVert_{H^{l+1}_{\mathrm{scl}}(\O)}+ \lVert u\rVert_{L^2(\O)}\rb. 
\]
Now assume by induction that for any $\B$ with $|\B|=l+3$ and $\B_n=r>2$, we have that 
\Beq\label{second_induction_assumption}
\lVert h^{|\B|}D^{\B} u\rVert_{L^2(\O)}\leq C \lb \lVert h^2 L u\rVert_{H^{l+1}_{\mathrm{scl}}(\O)}+ \lVert u\rVert_{L^2(\O)}\rb. 
\Eeq
Let us consider a $\B$ with $|\B|=l+3$ and $\B_n=r+1$. We write $\B= \g+\delta$ where $\delta=(0,\cdots,0,2)$.

Then 
\[	h^{l+1}D^{\g} h^2 L u = a_{nn}h^{l+3}D^{\B} u + h^{l+3}\sum_{\substack{|\beta|=0\\ \B_n=0,1,\dots,r}}^{l+3}\left[C_{\B}D^{\B}u \right],
\]
for some smooth functions $C_{\B}$. 

Since $a_{nn}\geq \theta>0$ by ellipticity, we have, using the induction assumption \eqref{second_induction_assumption}, 
\[
\lVert h^{l+3} D^{\B} u\rVert_{L^2(\O)} \leq C \lb \lVert h^2 L u\rVert_{H^{l+1}_{\mathrm{scl}}(\O)}+\lVert u\rVert_{L^2(\O)}\rb. 
\]
Finally, we have, 
\[
\lVert u\rVert_{H^{l+3}_{\mathrm{scl}}(\O)} \leq C \lb \lVert h^2 L u\rVert_{H^{l+1}_{\mathrm{scl}}(\O)}+\lVert u\rVert_{L^2(\O)}\rb.
\]
The proof is now complete.
\epr

\section*{Acknowledgments}
This work was initiated during a visit of S.B. to TIFR Centre for Applicable Mathematics, Bangalore, India, which was partly supported by a SERB MATRICS research grant of V.P.K.
S.B. is partly supported by a SEED grant of Indian Institute of Science Education and Research, Bhopal, India. S.K.S. was partly supported by the European Research Council under Horizon 2020 (ERC CoG 770924).
\bibliographystyle{plain}
\bibliography{bibfile.bib}
\end{document}